\newtheorem{Theorem}{Theorem}[section]
\newtheorem{Proposition}[Theorem]{Proposition}
\newtheorem{Lemma}[Theorem]{Lemma}
\newtheorem{Corollary}[Theorem]{Corollary}
\theoremstyle{definition}
\newtheorem{Definition}[Theorem]{Definition}
\newtheorem{Remark}[Theorem]{Remark}
\newcommand{\bTheorem}[1]{
	\begin{Theorem} \label{T#1} }
	\newcommand{\eT}{\end{Theorem}}
\newcommand{\bProposition}[1]{
	\begin{Proposition} \label{P#1}}
	\newcommand{\eP}{\end{Proposition}}
\newcommand{\bLemma}[1]{
	\begin{Lemma} \label{L#1} }
	\newcommand{\eL}{\end{Lemma}}
\newcommand{\bCorollary}[1]{
	\begin{Corollary} \label{C#1} }
	\newcommand{\eC}{\end{Corollary}}
\newcommand{\bRemark}[1]{
	\begin{Remark} \label{R#1} }
	\newcommand{\eR}{\end{Remark}}
\newcommand{\bDefinition}[1]{
	\begin{Definition} \label{D#1} }
	\newcommand{\eD}{\end{Definition}}
\newcommand{\dif}{\mathrm{d}}
\newcommand{\Div}{{\rm div}}
\newcommand{\Grad}{\nabla_x}
\newcommand{\dx}{\,{\rm d} {x}}
\newcommand{\dt}{\,{\rm d} t }
\newcommand{\dv}{\rm div}
\newcommand{\R}{\mathbb{R}}
\newcommand{\vu}{\vc{u}}
\newcommand{\vc}[1]{{\bf #1}}
\newcommand{\mr}{\mathbb{R}}
\newcommand{\prst}{\mathbb{P}}
\newcommand{\p}{\mathbb{P}}
\newcommand{\T}{\mathbb{T}}
\newcommand{\mn}{\mathbb{N}}
\newcommand{\mt}{\mathbb{T}^3}
\newcommand{\D}{{\rm d}}
\begin{document}


\title{Convergence of a spectral method for the stochastic incompressible Euler equations}

\author{Abhishek Chaudhary 
}

\date{\today}

\maketitle

\medskip
\centerline{ Centre for Applicable Mathematics, Tata Institute of Fundamental Research}
\centerline{P.O. Box 6503, GKVK Post Office, Bangalore 560065, India}\centerline{abhi@tifrbng.res.in}


\medskip
\begin{abstract}
	We propose a spectral viscosity method (SVM) to approximate the incompressible Euler equations driven by a \emph{multiplicative} noise. We show that SVM solution converges to a {\em dissipative measure-valued martingale} solution. These solutions are weak in the probabilistic sense i.e. the probability space and the driving Wiener process are an integral part of the solution. We also exhibit weak (measure-valued)-strong uniqueness principle. Moreover, we establish \emph{strong} convergence of approximate solutions to the regular solution of the limit system at least on the lifespan of the latter, thanks to the weak (measure-valued)--strong uniqueness principle for the underlying system.
\end{abstract}

{\bf Keywords:} Euler system; Incompressible fluids; Stochastic forcing; Multiplicative noise; Spectral method; Dissipative measure-valued martingale solution; Weak-strong uniqueness.
\section{Introduction}
Fluid dynamics is one of the most demanding research area in mathematics and motivates many question in stochastic analysis. Since the equations of turbulence are very difficult to examine, many reseachers are interested to study the so called \emph{shell-models}. These are simplified models which capture some of the phenomena of turbulance in a more tractable mathematical reference. One typical example is the Euler equations for the motion of an inviscous incompressible fluid which have an intensive role in geophysics;  in science; in meteorology; in engineering; in areospace; in astrophysics and of course, in mathematics where advanced techniques for existence and uniqueness provide important mathematical tool and new theoretical insight. Stochastic partial differential equations (SPDEs) is a subject that has been the focus of much activity during the last decade. One important motivation for studying such equations comes from limiting models of particle system. Stochastic deformation of classical mechanics is a challenging area in which interactions with stochastic analysis are substantial. To accommodate external influence for which a precise model is missing, it is natural to consider stochastic version of the Euler equations.

\noindent 

In this article, we consider the stochastic Euler equations governing the time evolution of the velocity $\vu$ and the scalar pressure field $\Pi$ of an inviscid fluid on the three dimensional torus $\T^3$.
The system of equations reads
\begin{equation}
	\begin{cases} 
		\D \vu(t,x) + \left[ \Div (\vu(t,x) \otimes \vu(t,x))+  \Grad \Pi(t,x)\right]  \dt  =  \mathcal{\sigma} (\vu(t,x)) \,\D W(t), &\quad \mbox{in}\,\, (0,T)\times\T^3, \label{P1} \\ 
		\Div \,\vu(t,x) = 0,&\quad \mbox{in}\,\, (0,T)\times\T^3, \\
		\vu(0, x) = \vu_0(x), &\quad \mbox{in}\,\, \mathbb{T}^3 
	\end{cases}
\end{equation}
where $T>0$ fixed, $\vu_0$ is given initial data. 
 Let $\big(\Omega, \mathcal{F}, (\mathcal{F}_t )_{t\ge0}, \mathbb{P} \big)$ be a stochastic basis, where $\big(\Omega, \mathcal{F}, \mathbb{P} \big)$ is a probability space and $(\mathcal{F}_t)_{t \ge 0}$ is a complete filtration with the usual assumptions. We assume that $W$ is a cylindrical Wiener process defined on the probability space $(\Omega,\mathcal{F},\p)$, and the coefficient $\mathcal{\sigma}$ is generally nonlinear and satisfies suitable growth assumptions (see Section \ref{section 2} for the complete list of assumptions). In particular, the map $ \vu \mapsto \mathcal{\sigma}(\vu)$ is  a Hilbert space valued function signifying the \emph{multiplicative} nature of the noise.
\subsection{Euler equations}
The Euler equations are the classical model for the motion of an incompressible, inviscid, homogenous fluid. The addition of stochastic terms to the governing equations is commonly used to account for empirical, numerical, and phsyical uncertainties in applications ranging from climatology to turbulence theory. 

\noindent 

In deterministic set-up, for general initial data, the global existence of smooth solution remains a well known open problem for the Euler equations and also their dissipative counterpart, the Naiver-stokes equations. Non-uniqueness was shown for the first time by Scheffer \cite{Sch} who constructed a non trivial weak solution of the 2D incompressible Euler equations with compact support in time. De Lellis, Sz\'ekelyhidi \cite{DelSze3,DelSze} and Chiodaroli et al. \cite{chio01} established ground breaking results, that confirms infinitely many weak solutions can be constructed for the Euler equations in three dimensions. In these works, the method of convex integration was used in order to prove non-uniqueness of weak
solutions to Euler equations. Furthermore, the non-uniqueness was established among weak solutions with dissipating energy which is one of the well-accepted criteria for the selection of physically relevant solutions. In quest for a global-in-time solution, Diperna \cite{DL} proposed a new-concept of solution, known as measure-valued solution to non-linear system of partial differential equation admitting uncontrollable oscillations. Brenier et al. \cite{brenier} proposed a new approach seeing the measure valued solutions as possibly the largest class in which, the family of smooth solutions is stable. In particular, they show the so-called weak (measure-valued)-strong uniqueness principle for the incompressible Euler equations. More specifically, a classical  and a measure-valued solution emanating from the same initial data coincide as long as the former exists. Following the philosophy of Brenier et al. \cite{brenier}, we focus on the concept of measure valued solution in the widest possible-sense.

\noindent

 In stochastic set-up, Glatt-Holtz and Vicol \cite{GHVic} obtained local well-posedness of strong solutions to the
stochastic incompressible Euler equations in two and three dimensions, global well-posedness in two dimensions for
additive and linear multiplicative noise. Local well-posedness for the three dimensional stochastic
compressible Euler equations was proved by Breit and Mensah \cite{BreitM}.
 The convex integration has already been applied in a stochastic setting, namely, to the isentropic Euler system by Breit, Feireisl and Hofmanova \cite{BreitFM} and to the full Euler system by Chiodaroli, Feireisl and Flandoli \cite{EEF}.
There has been a number of attempts to define a suitable notion of measure-valued solutions for the stochastic incompressible Euler equations driven by \emph{additive} noise, starting from the work of Kim \cite{Kim2}, Breit $\&$ Moyo \cite{BrMo}, and most recently by Hofmanova et al. \cite{hof}, where the authors introduced a class of dissipative solutions which allowed them to demonstrate weak-strong uniqueness property and non-uniqueness of solutions in law. However, none of the above mentioned framework can be applied to \eqref{P1}, since the driving noise is \emph{multiplicative} in nature. We also mention recent work \cite{K1, K2, MKS01} on the Euler equations driven by a \emph{multiplicative noise}.
\subsection{Spectral method}
The prototype of spectral methods for the solution of differential equations is the well-known Fourier method which consists of representing the solution as a truncated series expansion, the unknowns being the expansion coefficients. The Fourier basis is appropriate for periodic problems.
Spectral methods have emerged as powerful computional technique for simulation of complex, smooth physical phenomena. Among other applications, they have contributed to our understanding of turbulence by succesfully simulating incompressible turbulent flows which have been extensively used in meteorology, geophysice and have been recentely applied to time domain electromagnetic field (see \cite{Roger}). Spectral method may be viewed as an extreme development of the class of discretization schemes for non-linear differential equations. We refer to \cite{1Tadmor, 2Schochet,3Tadmor,4Tadmor,5Tadmor} for spectral method related articles.

\noindent

We also mention the work of Eitan Tadmor \cite{3Tadmor} in which he discusses behaviour and convergence of Fourier methods for scalar nonlinear conservation laws that exhibit spontaneous shock discontinuities. Mishra et al. \cite{Mishra} combined the spectral (viscosity) method and ensemble averaging to propose an algorithm that computes admissible measure valued solutions of the incompressible Euler equations.
\subsection{Aim and scope of this paper}
In view of wide usage of stochastic fluid dynamics, there is an essential need to improve the mathematical foundations of the stochastic partial differential equations of fluid flow, and in particulary to study inviscid models such as the stochastic incompressible Euler equations. Spectral methods based on projecting into a finite number of Fourier modes are widely employed particularly in the simulation of flows with periodic boundary conditions, while finite difference and finite element methods are very useful when discretizing the Euler equations in domain with complex geometry, there are few results available crossponding to the stochastic incompressible flow equations. To that context, we mention the work of Brz\'ezniak \cite{Brzeniak} where the author study finite-element-based space-time discretizations of the incompressible Navier–Stokes equations with noise. In \cite{K2}, the authors proved existence of measure-valued solutions by showing that weak martingle solutions of the stochastic Naiver-Stokes equations converge to a measure-valued solution of \eqref{P1} as the viscosity tends to zero. But in this work, formulation of measure-valued solutions is slightly different from the given formulation in \cite{K2} (see Definition \ref{def:dissMartin}). Note that our work bears some similarities with recent work of Mishra et al. \cite{Mishra} on deterministic system of the Euler equations. In comparison to previous work of authors \cite{K2} the main novelty of this work lies in successfully handling the multiplicative noise term. Our problems need to invoke ideas from spectral methods for deterministic problems and meaningfully fuse them with available approximation methods for SDEs. In the realm of stochastic conservation laws, noise-noise interaction terms play a fundamental role to establish well-posedness theory, for details see \cite{BhKoleyVa, BhKoleyVa_01, BisKoleyMaj, Koley1, Koley2,koley2013multilevel,Koley3}.
The main contributions of this article are as follows;
\begin{itemize}
\item We study convergence of the spectral method for the incompressible Euler equations driven by a {\em multiplicative noise}. The Cauchy problem for the Euler equations is in general ill-posed in the class of admissible weak solutions. This suggests there might be sequences of approximate solutions that develop fine scale oscillations. Accordingly, the concept of measure-valued solution that capture possible oscillations is more suitable for analysis. We show that sequence of approximate solutions converges to a {\em dissipative measure-valued martingale} solution to the stochastic Euler equations. 
\item In view of the new framework based on the theory of measure-valued solutions, we adapt the concept of $\mathcal{K}$-convergence, first developed in the context of Young measures by Balder \cite{Balder} (see also Feireisl et. al. \cite{FLM1}), to show the pointwise convergence of arithmetic averages (Cesaro means) of  approximate solutions to a \emph{dissipative} solution of the limit system \eqref{P1}.
\item We show that {\em dissipative measure--valued martingale} solutions satisfy a weak–strong uniqueness principle. More precisely, if for some initial data there is an analytically strong solution (defined up to a stopping time), then it coincides with all dissipative measure--valued martingale solutions having the same initial data.
\item When solutions of the stochastic incompressible Euler system possess maximal regularity, by making use of weak (measure-valued)--strong uniqueness principle, we show \emph{unconditional} strong $L^1$-convergence of approximate solutions to the regular solution of the limit system.
\end{itemize}
The paper is organized as follows. In Section \ref{Section 2}, we first introduce our mathematical setting, assumptions and preliminary result. Then, we introduce the definition of {\em dissipative measure-valued martingale} solutions to the incompressible Euler system driven by a \emph{multiplicative noise}, keeping in mind that this framework would allow us to establish weak
(measure-valued)–strong uniqueness principle, and state  the main results of this article in Section \ref{Section 3}. In Section \ref{Section 4}, we give details of the spectral viscosity method to approximate the stochastic incompressible Euler equations. In Section \ref{Section 5}, we prove the convergence of the spectral method in which we present a proof of convergence of approximate solutions to a dissipative measure-valued martingale solutions using stochastic compactness. In Section \ref{Section 6}, we use the concept of $\mathcal{K}$-convergence to exhibit the pointwise convergence of approximate solutions. Section \ref{Section 7} is devoted to deriving the weak (measure-valued)-strong uniqueness principle by making use of a suitable relative energy inequality. Finally, in Section \ref{Section 8}, we make use of weak (measure-valued)-strong uniqueness property to show the convergence of  approximations to the  regular solution of the stochastic incompressible Euler system \eqref{P1}.

\section{Mathematical setting}\label{Section 2}
	\textbf{Function space:}  Let $C_{\text{div}}^\infty(\mathbb{T}^3)$ be the space of infinitely differentiable $3$-dimensional vector fields $u$ on $\mathbb{T}^3$, satisfying $\Div \,u(x)=0$.
$$C_{\text{div}}^\infty(\mathbb{T}^3)=\{\bm{\varphi}\in C^{\infty}(\mathbb{T}^3):\nabla\cdot\bm{\varphi}=0\},$$
$$L_{\text{div}}^2(\mathbb{T}^3)=\bm{cl}_{L^2(\mathbb{T}^3)}C_{\text{div}}^\infty(\mathbb{T}^3)=\{\bm{\varphi}\in L^{2}(\mathbb{T}^3):\nabla\cdot\bm{\varphi}=0\},$$
\noindent
\textbf{Helmholtz projection:} An important consequence of elliptic theory is the existence of the Helmholtz decomposition. It allows to decompose any vector-valued function in $L^2(\mathbb{T}^3;\R^3)$ into a divergence free part and a gradient part. Set
$$(L^2_{\dv}(\mathbb{T}^3))^\perp:=\{\mathbf{u}\in L^2(\T^3;\R^3)|\mathbf{u}=\nabla \psi,\, \psi\in H^{1}(\T^3;\R)   \}$$ 
The Helmholtz decomposition is defined by
$$\mathbf{u}=\mathcal{P}_H \mathbf{u}+\mathcal{Q}_H\mathbf{u}\,\,\,\,\text{for any }\,\,\,\mathbf{u}\in L^2(\mathbb{T}^3)$$
where $\mathcal{P}_H$ is the projection from $L^2(\mathbb{T}^3)$ to $L^2_{\dv}(\mathbb{T}^3)$ and 
$\mathcal{Q}_H=\mathbb{I}-\mathcal{P}_H$ is also projection from $L^2(\mathbb{T}^3)$ to $(L^2_{\dv}(\mathbb{T}^3))^\perp$.
$L^2(\mathbb{T}^3)$ admits a decomposition
$$L^2(\mathbb{T}^3)=L^2_{\dv}(\mathbb{T}^3){\oplus}(L^2_{\dv}(\mathbb{T}^3))^\perp$$ This decomposition is orthogonal with respect to $L^2(\mathbb{T}^3)$-inner product.
By property of projection $\mathcal{P}_H$, we have for $\mathbf{u}\in L^2(\mathbb{T}^3)$
\begin{align}\label{property1}\langle \mathcal{P}_H\mathbf{u},\psi\rangle=\langle \mathbf{u}, \psi \rangle\qquad \text{for all}\,\, \psi\in L_{\text{div}}^2(\mathbb{T}^3). 
\end{align}
\subsection{Stochastic framework}\label{section 2}
Here we specify details of the stochastic forcing term.

\noindent
	\textbf{Brownian motions:} 
	Let $(\Omega,\mathcal{F},(\mathcal{F}_t)_{t\geq0},\prst)$ be a stochastic basis with a complete, right-continuous filtration. The stochastic process $W$ is a cylindrical $(\mathcal{F}_t)$-Wiener process in a separable Hilbert space $\mathfrak{U}$. It is formally given by the expansion
	$$W(t)=\sum_{k\geq 1} e_k W_k(t),$$
	where $\{ W_k \}_{k \geq 1}$ is a sequence of mutually independent real-valued Brownian motions relative to $(\mathcal{F}_t)_{t\geq0}$ and $\{e_k\}_{k\geq 1}$ is an orthonormal basis of $\mathfrak{U}$. Finally, we define the auxiliary space $\mathfrak{U}_0\supset\mathfrak{U}$ via
	$$\mathfrak{U}_0:=\bigg\{\vu=\sum_{k\geq1}\beta_k e_k;\;\sum_{k\geq1}\frac{\beta_k^2}{k^2}<\infty\bigg\},$$
	endowed with the norm
	$$\|\vu\|^2_{\mathfrak{U}_0}=\sum_{k\geq1}\frac{\beta_k^2}{k^2},\quad \vu=\sum_{k\geq1}\beta_k e_k.$$
	Note that the embedding $\mathfrak{U}\hookrightarrow\mathfrak{U}_0$ is Hilbert--Schmidt. Moreover, $\prst$-a.s., trajectories of $W$ are in $C([0,T];\mathfrak{U}_0)$.
	
	\noindent
	\textbf{Multiplicative noise:}
	To give the precise definition of the diffusion coefficient $\mathcal{\sigma}$, consider $\mathbf{u}:\Omega\times[0,T]\time\Omega\to L^2(\mt;\R^3)$ is predictable . 
	Let $\,\mathcal{\sigma}({\bf u}):\mathfrak{U}\rightarrow L^2(\mt;\R^3)$ be defined as follows
	$$\mathcal{\sigma}({\bf u})e_k=\mathcal{\sigma}_k({\bf u}(\cdot)).$$
	The coefficients $\mathcal{\sigma}_{k}:\mr^3\rightarrow\mr^3$ are $C^1$-functions that satisfy
	\begin{align}
		\sum_{k\ge1}|\mathcal{\sigma}_k({\bf u})|^2 \le D_0 (1+|{\bf u}|^2),\label{FG1}\\
		\sum_{k\ge1}|\mathcal{\sigma}_k({\bf u})-\mathcal{\sigma}_k({\bf v})|^2\le D_1 |{\bf u}-\textbf{v}|^2.
		\label{FG2}
	\end{align}
	
	Then the stochastic integral
	\[
	\int_0^t \mathcal{\sigma}(\vu) \ {\rm d} W = \sum_{k \geq 1}\int_0^t \mathcal{\sigma}_k (\vu) \ {\rm d} W_k
	\]
	is a well-defined $(\mathcal{F}_t)$-martingale.
\subsection{Preliminary result}
\textbf{Modified version of Jakubowski-Skorokhod theorem:}
 Strong convergence of approximate solutions in $\omega$ variable plays a pivotal role. To that context, we need Jakubowski-Skorokhod theorem, delivering a new probability space and new random variables, with the same laws as the original ones, converging almost surely.
 However, for technical reasons, we have to use a modified version of Jakubowski-Skorokhod theorem \cite[Corollary 7.3]{Motyl} which is stated below.
\begin{Theorem}\label{newth}
	Let $(\Omega, \mathcal{F},\mathbb{P})$ be a probability space and $S_1$ be separable metric space and $S_2$ be a quasi-polish space (there is a sequence of continuous functions $h_n:S_2\to[-1,1]$ that separates points of $S_2$). $\mathcal{B}(S_1)\otimes\mathcal{S}_2$ is sigma algebra associated with product space $S_1\times S_2$, where $\mathcal{S}_2$ is the sigma algebra generated by the sequence of $h_n$. Let $U_n:\Omega\to S_1\times S_2$, $n\in\mathbb{N}$, be a family of random variables, such that the sequence $\{\mathcal{L}aw(U_n):n\in\mathbb{N}\}$ is weakly convergent on $S_1\times S_2$.
	For $k=1,2,$ let $\pi_i:S_1\times S_2$ be the projection onto $S_i$, i.e.
	$$U=(U_1,U_2)\in S_1\times S_2\mapsto\pi_i(U)=U_i\in S_i.$$ 
	Finally let us assume that there exists a random variable $X:\Omega\to S_1$ such that $\mathcal{L}aw(\pi (U_n))=\mathcal{L}aw(X),\,\forall\,n\in\mathbb{N}$.
	Then, there exists a probability space $(\widetilde{\Omega},\widetilde{\mathcal{F}},\widetilde{\mathbb{P}})$, a family of $S_1\times S_2$-valued random variables $\{\widetilde{U}_n:n\in\mathbb{N}\}$, on $(\widetilde{\Omega},\widetilde{\mathcal{F}},\widetilde{\mathbb{P}})$ and a random variable $\widetilde{U}:\widetilde{\Omega}\to S_1\times S_2$ such that
	\begin{enumerate}
		\item  $\mathcal{L}aw(\widetilde{U}_n)=\mathcal{L}aw(U_n)\,\forall\,n\in\mathbb{N};$
		\item
		$\widetilde{U}_n\to \widetilde{U}\,\text{in}\,S_1\times S_2,\,\mathbb{P}-\,\text{a.s.}$
		\item
		$\pi_1(\widetilde{U}_n)(\widetilde{w})=\pi_1(\widetilde{U})(\widetilde{w}),\,\forall\,\widetilde{w}\in\widetilde{\Omega}. $
	\end{enumerate}
\end{Theorem}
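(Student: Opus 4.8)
The plan is to deduce the statement from the classical Jakubowski--Skorokhod representation theorem applied on the product space, and then to refine that construction so that the $S_1$-coordinate becomes independent of $n$.

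\emph{Step 1 (reduction to one quasi-Polish space).} The first observation is that $S_1\times S_2$ is again quasi-Polish: since $S_1$ is separable metric, its topology is generated by a countable family of continuous functions $g_m\colon S_1\to[-1,1]$ that also separates points, and then $\{(x_1,x_2)\mapsto g_m(x_1)\}_m\cup\{(x_1,x_2)\mapsto h_n(x_2)\}_n$ is a countable family of continuous functions separating points of $S_1\times S_2$, whose generated $\sigma$-algebra is exactly $\mathcal B(S_1)\otimes\mathcal S_2$. Since $\{\mathcal{L}aw(U_n)\}$ is weakly convergent, the classical Jakubowski--Skorokhod theorem then provides a probability space $(\widehat\Omega,\widehat{\mathcal F},\widehat{\mathbb P})$ carrying $S_1\times S_2$-valued random variables $\widehat U_n$ $(n\in\mathbb N)$ and $\widehat U$ with $\mathcal{L}aw(\widehat U_n)=\mathcal{L}aw(U_n)$ and $\widehat U_n\to\widehat U$ in $S_1\times S_2$, $\widehat{\mathbb P}$-a.s.

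\emph{Step 2 (freezing the first coordinate).} At this point $\pi_1(\widehat U_n)$ still depends on $n$, so a further step is needed. Set $\mu:=\mathcal{L}aw(X)$; by hypothesis $\mu$ is the first marginal of $\mathcal{L}aw(U_n)$ for every $n$, hence also of the weak limit $\mathcal{L}aw(\widehat U)$. Because $S_1$ is separable metric and $S_2$ is quasi-Polish, regular conditional probabilities on $S_2$ exist and one may disintegrate
\[
\mathcal{L}aw(U_n)(\dif x_1,\dif x_2)=\mu(\dif x_1)\,K_n(x_1,\dif x_2),\qquad \mathcal{L}aw(\widehat U)(\dif x_1,\dif x_2)=\mu(\dif x_1)\,K_\infty(x_1,\dif x_2),
\]
with probability kernels $K_n,K_\infty$ from $S_1$ to $S_2$. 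One then takes $(\widetilde\Omega,\widetilde{\mathcal F},\widetilde{\mathbb P}):=\big(S_1\times[0,1],\ \mathcal B(S_1)\otimes\mathcal B([0,1]),\ \mu\otimes\lambda\big)$ ($\lambda$ the Lebesgue measure) and defines $\pi_1(\widetilde U_n)(x_1,r):=x_1=:\pi_1(\widetilde U)(x_1,r)$, which is manifestly independent of $n$, giving conclusion (3). It remains to construct, using the auxiliary variable $r$, jointly measurable maps $\zeta_n,\zeta_\infty\colon S_1\times[0,1]\to S_2$ such that for $\mu$-a.e.\ $x_1$ the push-forward of $\lambda$ by $\zeta_n(x_1,\cdot)$ (resp.\ $\zeta_\infty(x_1,\cdot)$) is $K_n(x_1,\cdot)$ (resp.\ $K_\infty(x_1,\cdot)$) and $\zeta_n(x_1,\cdot)\to\zeta_\infty(x_1,\cdot)$ $\lambda$-a.s.; then $\pi_2(\widetilde U_n):=\zeta_n$ and $\pi_2(\widetilde U):=\zeta_\infty$ yield $\mathcal{L}aw(\widetilde U_n)=\mu\otimes K_n=\mathcal{L}aw(U_n)$ (conclusion (1)) and, by Fubini, $\widetilde U_n\to\widetilde U$ $\widetilde{\mathbb P}$-a.s.\ (conclusion (2)).

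\emph{Main obstacle.} The hard part is this last construction in Step 2: the Skorokhod coupling must be performed \emph{simultaneously over all fibres $\{x_1\}\times S_2$ and measurably in $x_1$}, so that the $\zeta_n$ are genuine random variables and the fibrewise almost sure convergence lifts to almost sure convergence on $\widetilde\Omega$. This is exactly where the hypothesis that $\mathcal{L}aw(\pi_1 U_n)$ equals the fixed measure $\mu$ is used---it is what allows the whole family to be disintegrated over a single base space $(S_1,\mu)$---and where the quasi-Polish structure enters: embedding $S_2$ into $[-1,1]^{\mathbb N}$ via the separating sequence $(h_n)$ reduces the fibrewise problem to a metrizable, relatively compact setting in which a jointly measurable Skorokhod (Blackwell--Dubins) coupling can be selected, after which the required fibrewise weak convergence of the kernels must also be extracted. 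Assembling these measure-theoretic ingredients is the content of \cite[Corollary~7.3]{Motyl}, to which we refer for the full details.
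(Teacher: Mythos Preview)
The paper does not prove this theorem at all: it is stated as a preliminary result and attributed directly to \cite[Corollary~7.3]{Motyl}, with no argument given. Your sketch therefore goes further than the paper itself, and what you have written is a reasonable outline of the strategy behind that cited result---reduction to a single quasi-Polish target via the separating sequences, followed by a fibrewise Skorokhod coupling over the common first marginal $\mu$, with the measurable selection issue identified as the genuine technical point. Since you, too, ultimately defer to \cite[Corollary~7.3]{Motyl} for the completion of Step~2, your proposal and the paper are in agreement: both treat the theorem as an imported tool rather than something to be proved here.
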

\section{Definitions and main results}\label{Section 3}
\subsection{Dissipative measure-valued martingale solutions }
 We are ready to introduce the concept of \textit{dissipative measure--valued martingale solution} to the stochastic incompressible Euler system. In what follows, let $\mathcal{M} =  \R^3$ be the phase space associated to the incompressible Euler system. 
\begin{Definition}[Dissipative measure-valued martingale solution]
	\label{def:dissMartin}
	Let $\Lambda$ be a Borel probability measure on $L_{\text{div}}^2(\mathbb{T}^3)$. Then $\big[ \big(\Omega,\mathcal{F}, (\mathcal{F}_{t})_{t\geq0},\mathbb{P} \big); \mathcal{V}^{\omega}_{t,x}, W \big]$ is a dissipative measure-valued martingale solution of \eqref{P1}, with initial condition $\mathcal{V}^{\omega}_{0,x}$; if
	\begin{enumerate}[(a)]
		\item $\mathcal{V}^{\omega}$ is a random variable taking values in the space of Young measures on $L^{\infty}_{w^*}\big([0,T] \times \T^3; \mathcal{P}\big(\mathcal{M})\big)$. In other words, $\p$-a.s.
		$\mathcal{V}^{\omega}_{t,x}: (t,x) \in [0,T] \times \T^3  \rightarrow \mathcal{P}(\mathcal{M})$ is a parametrized family of probability measures on $\mathcal{M}$,
		\item $ \big(\Omega,\mathcal{F}, (\mathcal{F}_{t})_{t\geq0},\mathbb{P} \big)$ is a stochastic basis with a complete right-continuous filtration,
		\item $W$ is a $(\mathcal{F}_{t})$-cylindrical Wiener process,
		\item the average velocity $\langle \mathcal{V}^{\omega}_{t,x}; \vu  \rangle$ satisfies,  for any $\bm{\varphi}\in C_{\text{div}}^\infty(\mathbb{T}^3)$,  $t\mapsto \langle \langle \mathcal{V}^{\omega}_{t,x}; \vu  \rangle (t, \cdot),\bm\varphi\rangle\in C[0,T]$, $\mathbb{P}$-a.s., the function $t\mapsto \langle \langle \mathcal{V}^{\omega}_{t,x}; \vu  \rangle (t, \cdot),\bm{\phi}\rangle$ is progressively measurable, and for any $\bm\varphi\in C^1(\mathbb{T}^3)$,
		\begin{align*}
			\int_{\mathbb{T}^3}\langle \mathcal{V}^{\omega}_{t,x}; \vu  \rangle\cdot\nabla_x \bm{\varphi}\,dx=0
		\end{align*}
		for all $t\in[0,T]$, $\mathbb{P}-$a.s., and 
		\begin{align*}
			\mathbb{E}\, \bigg[ \sup_{t\in(0,T)}\Vert  \langle \mathcal{V}^{\omega}_{t,x}; \vu  \rangle (t, \cdot) \Vert_{L_{\dv}^2(\mt)}^p\bigg]<\infty 
		\end{align*}
		for all $1\leq p<\infty$,
		\item $\Lambda=\mathcal{L}[\mathcal{V}^{\omega}_{0,x}]$,
		\item  the integral identity 
		\begin{equation} \label{second condition measure-valued solution}
			\begin{aligned}
				&\int_{\T^3} \langle \mathcal{V}^{\omega}_{\tau,x};\vu \rangle \cdot \bm{\varphi} \,\D x - \int_{\T^3} \langle \mathcal{V}^{\omega}_{0,x};\vu \rangle \cdot \bm{\varphi}\, \D x \\
				&\qquad = \int_{0}^{\tau} \int_{\T^3}\langle \mathcal{V}^{\omega}_{t,x}; {\vu \otimes \vu }\rangle: \nabla_x \bm{\varphi} \, \D x \, \D t + \int_0^{\tau}\,\int_{\T^3} \langle \mathcal{V}^{\omega}_{\tau,x};\mathcal{\sigma}(\vu)\rangle\cdot\bm{\varphi}\, \D W(t) \,\D x+ \int_{0}^{\tau} \int_{\T^3}  \nabla_x \bm{\varphi}: d\lambda_\mathcal{C},
			\end{aligned}
		\end{equation}
		holds  $\p$-a.s., for all $\tau \in [0,T)$, and for all $\bm{\varphi} \in C_{\text{div}}^{\infty}(\T^3;\mathbb{R}^3)$, where $\lambda_\mathcal{C}\in L_{w*}^\infty([0,T];\mathcal{M}_b({\T^3}))$, $\p$-a.s., is a tensor--valued measure; $\lambda_\mathcal{C}$ is called \textit{concentration defect measures};
		\item there exists a real-valued square integrable continuous martingale $\mathcal{M}^2_{E}$, such that the following inequality
		\begin{align} \label{third condition measure-valued solution}
			\begin{aligned}
				\mathcal{E}(t+)\, \leq\, &\mathcal{E}(s-)+ \frac{1}{2} \sum_{k\ge\,1}\int_s^{t} \int_{\T^3} \left\langle \mathcal{V}^{\omega}_{\tau,x};{|{\mathcal{\sigma}_k(\vu)}|^2} \right\rangle \,\, \D x\, {\rm d}\tau \\
				&\quad  -\frac{1}{2}\sum_{k\ge\,1}\int_s^t\int_{\mathbb{T}^3}\Big(\mathcal{Q}_H\left\langle \mathcal{V}^{\omega}_{\tau,x};{|{\mathcal{\sigma}_k(\vu)}|}\right\rangle\Big)^2\,\D x\,{\rm d}\tau + \frac12\int_s^{t} \int_{\T^3} d \lambda_{\mathcal{D}} + \int_s^{t}  d\mathcal{M}^2_{E},
			\end{aligned}			
		\end{align}
		holds  $\p$-a.s., for all $0\le\,s\,\textless\,t\in (0,T)$ with
	$$\mathcal{E}(t-):=\liminf_{r\to 0^+}\frac{1}{r}\int_{t-r}^t\bigg(\int_{\mathbb{T}^3}\left\langle \mathcal{V}^{\omega}_{s,x};\frac{|{\bf u}|^2}{2} \right\rangle \dx +\mathcal{H}(s)\bigg)\,{\rm d}s$$
		$$\mathcal{E}(t+):=\liminf_{r\to 0^+}\frac{1}{r}\int_t^{t+r}\bigg(\int_{\mathbb{T}^3}\left\langle \mathcal{V}^{\omega}_{s,x};\frac{|{\bf u}|^2}{2} \right\rangle \dx +\mathcal{H}(s)\bigg)\,{\rm d}s$$
		Here $\lambda_\mathcal{D} \in L_{w*}^\infty([0,T];\mathcal{M}_b({\T^3}))$, $\p$-a.s., $\mathcal{H}\in L^{\infty}(0,T)$, $\mathcal{H}\geq 0$, $\p$-almost surely, and 
		$$\mathbb{E} \big[ \sup_{t \in (0,T)}\mathcal{H}(t)\big] < \infty,$$ with initial energy
		$$
		\mathcal{E}(0-)= 
		\int_{\mathbb{T}^3} \frac{1}{2} |\textbf{u}_0|^2\,\dx,
		$$
		\item there exists a constant $C>0$ such that
		\begin{equation} \label{fourth condition measure-valued solutions}
			\int_{0}^{\tau} \int_{\T^3} d|\lambda_\mathcal{C}| + \int_{0}^{\tau} \int_{\T^3} d|\lambda_\mathcal{D}| \leq C \int_{0}^{\tau} \mathcal{H}(t) dt,
		\end{equation}	
		$\p$-a.s., for every $\tau \in (0,T)$.	
	\end{enumerate}
\end{Definition}	
\begin{Remark}
	Notice that, a standard Lebesgue point argument applied to \eqref{third condition measure-valued solution} reveals that the energy inequality holds for a.e. $0 \le s <t$ in $(0,T)$:
	\begin{align}
		\label{energy_001}
		& \int_{\mathbb{T}^3}\left\langle \mathcal{V}^{\omega}_{t,x};\frac{|{\bf u}|^2}{2} \right\rangle \dx +\mathcal{H}(t) \, \leq\, \int_{\mathbb{T}^3}\left\langle \mathcal{V}^{\omega}_{s,x};\frac{|{\bf u}|^2}{2} \right\rangle \dx +\mathcal{H}(s) + \frac{1}{2} \sum_{k\ge\,1}\int_s^{t} \int_{\T^3} \left\langle \mathcal{V}^{\omega}_{\tau,x};{|{\sigma_k( \mathbf{u})}|^2} \right\rangle \,\, \D x\, {\rm d}\tau \notag \\ &\quad  -\frac{1}{2}\sum_{k\ge\,1}\int_s^t\int_{\mathbb{T}^3}\Big(\mathcal{Q}_H\left\langle \mathcal{V}^{\omega}_{\tau,x};{|{\sigma_k(\mathbf{ u})}|}\right\rangle\Big)^2\,\D x\,{\rm d}\tau + \frac12\int_s^{t} \int_{\T^3} d \lambda_D + \int_s^{t}  d\mathcal{M}^2_{E}, \,\,\p-a.s.
	\end{align}
	However, as it is evident from Section~\ref{Section 7}, we require energy inequality to hold for \emph{all} $s, t \in (0,T)$ to demonstrate weak-strong uniqueness principle. 
\end{Remark}
\subsection{Strong pathwise solutions.}
We are also interested in establishing weak (measure-valued)--strong uniqueness principle for dissipative measure-valued solutions to \eqref{P1}. Since such an argument requires the existence of strong solution, therefore, we first recall the notion of local strong pathwise solution for the stochastic incompressible Euler equations. We remark that such a solution can be constructed on any given stochastic basis, that is, solutions are probabilistically strong, and satisfies the underlying equation \eqref{P1} pointwise (not only in the sense of distributions), that is, solutions are srtong from the PDE standpoint. Existence of such a solution was first established by Glatt-Holtz $\&$ Vicol in \cite{GHVic}.
 
\begin{Definition}\textbf{(Local strong pathwise solution).} 
	\label{def:strsol}
	Let $(\Omega,\mathcal{F},(\mathcal{F}_t)_{t\geq0},\prst)$ be a stochastic basis with a complete right-continuous filtration, and ${W}$ be an $(\mathcal{F}_t) $-cylindrical Wiener process. Let $\vu_0$ be a $W^{m,p}(\T^3)$-valued $\mathcal{F}_0$-measurable random variable. Then
	$(\vu,\mathfrak{t})$ is said to be a local strong pathwise solution to the system \eqref{P1} provided
	\begin{enumerate}[(a)]
		\item $\mathfrak{t}$ is an a.s. strictly positive  $(\mathcal{F}_t)$-stopping time;
		\item the velocity $\vu$ is a $W_{\dv}^{m,p}(\mt)$-valued $(\mathcal{F}_t)$-predictable measurable process satisfying
		$$ \vu(\cdot\wedge \mathfrak{t}) \in   C([0,T]; W^{m,p}_{\Div}(\mt))\quad \mathbb{P}\text{-a.s.};$$
		\item for all $t\,\ge\,0$,
		\begin{equation}
			\begin{split}
				\vu (t \wedge \mathfrak{t})&= \vu_0- \int_0^{t \wedge \mathfrak{t}}\mathcal{P}_H(\vu \cdot\nabla\vu )\dif s + \int_0^{t \wedge \mathfrak{t}}\mathcal{P}_H{\mathcal{\sigma}}(\vu ) \ \D W.
			\end{split}
		\end{equation}
	\end{enumerate}
\end{Definition}
It is evident that classical solutions require spatial derivatives of the velocity field $\vu$ to be continuous $\prst$-a.s. This motivates the following definition.

\begin{Definition}\textbf{(Maximal strong pathwise solution).}\label{def:maxsol}
	Fix an initial condition, and a complete stochastic basis with a cylindrical Wiener process as in Definition \ref{def:strsol}. Then a triplet $$(\vu,(\tau_R)_{R\in\mn},\mathfrak{t})$$ is said to be a maximal strong pathwise solution to system \eqref{P1} provided
	
	\begin{enumerate}[(a)]
		\item $\mathfrak{t}$ is an a.s. strictly positive $(\mathbb{F}_t)$-stopping time;
		\item $(\tau_R)_{R\in\mn}$ is an increasing sequence of $(\mathbb{F}_t)$-stopping times such that
		$\tau_R<\mathfrak{t}$ on the set $[\mathfrak{t}<T]$,
		$\displaystyle{\lim_{R\to\infty}\tau_R}=\mathfrak t$ a.s. and
		\begin{equation}\label{eq:blowup}
			\sup_{t\in[0,\tau_R]}\|\vu(t)\|_{W^{1,\infty}(\T^3)}\geq R\quad \text{on}\quad [\mathfrak{t}<T] ;
		\end{equation}
		\item each pair $(\vu,\tau_R)$, $R\in\mn$,  is a local strong pathwise solution in the sense  of Definition \ref{def:strsol}.
	\end{enumerate}
\end{Definition}
\subsection{Statements of main results}

We now state main results of this paper. To begin with, regarding the existence of dissipative measure-valued martingale solutions, we have the following result.
\begin{Theorem}[\textbf{Existence of measure-valued solutions}] Let $\vu_0\in L_{\dv}^2(\T^3)$.
	 Then approximating solutions $\vu_n$ resulted by the spectral viscosity method \eqref{approximate equations} (semi-discrete scheme)  generates a dissipative measure-valued martingale solution in the sense of Definition \ref{def:dissMartin} to the incompressible Euler system \eqref{P1}.
\end{Theorem}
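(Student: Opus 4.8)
The plan is to obtain the dissipative measure-valued martingale solution as a limit of the spectral viscosity approximations $\vu_n$, combining uniform a priori estimates, stochastic compactness via Theorem~\ref{newth}, and the identification of limits through Young measures. First I would record the energy estimate for the semi-discrete SVM scheme: applying the It\^o formula to $\tfrac12\|\vu_n\|_{L^2}^2$, the spectral viscosity term produces a nonnegative dissipation $\varepsilon_n\|\vu_n\|_{\text{(high modes)}}^2$, the transport term vanishes by the divergence-free constraint and the Galerkin/spectral projection, the It\^o correction contributes $\tfrac12\sum_k\|\mathcal{P}_n\sigma_k(\vu_n)\|_{L^2}^2$ which by \eqref{FG1} is controlled by $D_0(1+\|\vu_n\|_{L^2}^2)$, and the martingale part is handled by Burkholder--Davis--Gundy. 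A Gronwall argument then yields
\begin{align*}
\mathbb{E}\Big[\sup_{t\in[0,T]}\|\vu_n(t)\|_{L^2(\T^3)}^p\Big] + \mathbb{E}\Big[\varepsilon_n\int_0^T \|\vu_n\|_{\text{high}}^2\,\dt\Big] \le C(p,D_0,T,\|\vu_0\|_{L^2})
\end{align*}
for all $1\le p<\infty$. This uniform bound is the source of all compactness.

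Next I would set up the compactness. For the spatial/phase oscillations I would introduce the Young measure $\mathcal{V}^{\omega,n}_{t,x}=\delta_{\vu_n(t,x)}$ generated by the bounded sequence $(\vu_n)$ in $L^\infty(0,T;L^2(\T^3))$, viewed as a random variable in the space of Young measures on $L^\infty_{w^*}([0,T]\times\T^3;\mathcal{P}(\mathcal{M}))$ (which is metrizable and, with the uniform bound, yields tightness). For temporal compactness, from the equation the process $t\mapsto\langle\langle\mathcal{V}^{\omega,n}_{t,x};\vu\rangle,\bm\varphi\rangle$ (tested against $\bm\varphi\in C^\infty_{\text{div}}$) is a sum of a term with bounded variation (the flux, bounded in $L^2$ uniformly by Cauchy--Schwarz on $\vu_n\otimes\vu_n$) and a stochastic integral, so it is bounded in $C^{\alpha}([0,T])$ for some $\alpha<1/2$, $\mathbb P$-a.s.\ with uniform moments; this gives tightness in $C([0,T])$ for each test function, hence in a suitable path space for the velocity field. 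Collecting: the laws of the family
\begin{align*}
U_n := \big(\mathcal{V}^{\omega,n},\ \vu_n,\ W\big)
\end{align*}
are tight on $S_1\times S_2$ with $S_1$ carrying the Wiener process (whose law is fixed) and $S_2$ the quasi-Polish Young-measure/velocity component. Applying the modified Jakubowski--Skorokhod theorem (Theorem~\ref{newth}) I obtain a new probability space $(\widetilde\Omega,\widetilde{\mathcal F},\widetilde{\mathbb P})$, new variables $\widetilde U_n=(\widetilde{\mathcal{V}}^n,\widetilde{\vu}_n,\widetilde W_n)$ with the same laws, converging $\widetilde{\mathbb P}$-a.s., and with $\widetilde W_n=\widetilde W$ fixed by property (3).

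Then I would pass to the limit in the equation and the energy inequality on the new space. The a.s.\ convergence of $\widetilde{\mathcal{V}}^n\to\widetilde{\mathcal{V}}$ in the Young-measure topology lets me identify the limits of the nonlinear quantities: $\langle\widetilde{\mathcal{V}}^n;\vu\otimes\vu\rangle\rightharpoonup\langle\widetilde{\mathcal{V}};\vu\otimes\vu\rangle + \lambda_{\mathcal C}$ and $\langle\widetilde{\mathcal{V}}^n;|\sigma_k(\vu)|^2\rangle$, $\langle\widetilde{\mathcal{V}}^n;|\sigma_k(\vu)|\rangle$ converge up to concentration/oscillation defects, where the defect measures $\lambda_{\mathcal C},\lambda_{\mathcal D}$ arise precisely as the difference between the weak limit of the nonlinearity and the value of the nonlinearity at the limit Young measure; the bound \eqref{fourth condition measure-valued solutions} and the dissipation term $\mathcal H$ come from the residual of the energy estimate (the lost spectral viscosity dissipation bounds the defects, cf.\ the deterministic argument of Mishra et al.). For the stochastic integral I would invoke the standard martingale-identification argument (showing the candidate process is a martingale with the right quadratic variation and cross-variation with $\widetilde W$, e.g.\ via the method of Bensoussan or Debussche--Glatt-Holtz--Temam) together with the strong $L^2(\widetilde\Omega\times[0,T]\times\T^3)$ convergence of $\sigma_k(\widetilde{\vu}_n)\to\langle\widetilde{\mathcal V};\sigma_k(\vu)\rangle$ in the appropriate averaged sense, which follows from \eqref{FG2} and the a.s.\ convergence of the Young measures. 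The Helmholtz projection $\mathcal P_H$ (equivalently the $\mathcal Q_H$ correction in \eqref{third condition measure-valued solution}) must be tracked carefully since the SVM scheme projects onto divergence-free Fourier modes; property \eqref{property1} ensures the limit satisfies the divergence constraint in item (d). Finally, taking $\widetilde{\mathcal V}_{0,x}=\delta_{\vu_0}$ fixes $\Lambda$ in item (e) and $\mathcal{E}(0-)=\tfrac12\int_{\T^3}|\vu_0|^2\,\dx$.

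\textbf{Main obstacle.} The hard part will be the rigorous identification of the two defect measures and, above all, the simultaneous passage to the limit in the \emph{energy inequality} \eqref{third condition measure-valued solution} so that it holds pointwise (with the $\liminf$-regularized $\mathcal E(t\pm)$) rather than merely in an integrated sense: one must upgrade lower semicontinuity of the energy under the Young-measure convergence, control the $\mathcal Q_H$ It\^o-correction term under weak limits, and show that the limit martingale $\mathcal M^2_E$ inherits square integrability and continuity — all while keeping the quantitative bound \eqref{fourth condition measure-valued solutions} linking $|\lambda_{\mathcal C}|,|\lambda_{\mathcal D}|$ to $\mathcal H$, which is what makes the solution concept useful for the later weak--strong uniqueness argument.
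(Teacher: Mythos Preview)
Your overall architecture matches the paper's: energy identity via It\^o, uniform $L^p(\Omega;L^\infty_t L^2_x)$ bounds, tightness on a product path space, Jakubowski--Skorokhod with the Wiener process held fixed, then passage to the limit in the weak formulation and in the energy inequality with concentration defects. Two points, however, are not right as stated and would cause the argument to break.

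\textbf{The stochastic integral.} You propose the martingale-identification method together with ``strong $L^2(\widetilde\Omega\times[0,T]\times\T^3)$ convergence of $\sigma_k(\widetilde\vu_n)\to\langle\widetilde{\mathcal V};\sigma_k(\vu)\rangle$''. This strong convergence does \emph{not} follow from \eqref{FG2} and the Young-measure convergence: the whole point of the measure-valued framework is that $\widetilde\vu_n$ need not converge strongly, and the Lipschitz bound on $\sigma_k$ only transfers \emph{strong} convergence of $\widetilde\vu_n$ into strong convergence of $\sigma_k(\widetilde\vu_n)$. What Young-measure convergence actually gives is $\sigma(\widetilde\vu_n)\rightharpoonup\langle\widetilde{\mathcal V};\sigma(\widetilde\vu)\rangle$ weakly in $L^2$. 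The paper then exploits that the It\^o map $\varphi\mapsto\int_0^t\varphi\,\D\widetilde W$ is linear and continuous from $L^2(\widetilde\Omega\times[0,T];L^2(\T^3))$ to $L^2(\widetilde\Omega;L^2(\T^3))$, hence \emph{weakly} continuous; combined with item (4) of Theorem~\ref{newth} (the same $\widetilde W$ for all $n$), this passes the weak limit directly through the stochastic integral in the momentum equation. For the energy inequality the paper does something different still: it includes the scalar martingales $\mathcal N_n=\int_0^\cdot\int_{\T^3}\vu_n\cdot\mathcal P_n\sigma(\vu_n)\,\D W$ as an additional coordinate in the path space, obtains $\widetilde{\mathcal N}_n\to\widetilde{\mathcal N}$ in $C([0,T];\R)$ a.s.\ from Skorokhod, and then shows by uniform $L^2$-boundedness and Vitali that the limit $\widetilde{\mathcal N}$ is again a square-integrable continuous martingale. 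Your quadratic-variation route would need precisely the strong convergence you do not have.

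\textbf{The defect $\mathcal H$.} You identify $\mathcal H$ with ``the lost spectral viscosity dissipation''. This is not how the paper obtains it, and the alternative would not yield the structure in Definition~\ref{def:dissMartin}. In the paper the spectral viscosity term $\varepsilon\int_s^t\|\mathcal Q_n(\nabla\widetilde\vu_n)\|_{L^2}^2$ is simply \emph{dropped} from the energy identity (it is nonnegative on the left), and $\mathcal H(t):=\widetilde\lambda_{\mathcal E}(t)(\T^3)$ is defined as the \emph{concentration defect of the kinetic energy} itself, i.e.\ the gap between the weak-$*$ limit of $\tfrac12|\widetilde\vu_n|^2$ in $L^\infty_{w^*}(0,T;\mathcal M_b(\T^3))$ and $\langle\widetilde{\mathcal V};\tfrac12|\widetilde\vu|^2\rangle\,\dx$. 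This is exactly what makes the domination \eqref{fourth condition measure-valued solutions} work: since $|\vu\otimes\vu|\le|\vu|^2$ and, by \eqref{FG1}, $|\sigma(\vu)|^2\le c(1+|\vu|^2)$, the concentration defects $\widetilde\lambda_{\mathcal C},\widetilde\lambda_{\mathcal D}$ are controlled by $\widetilde\lambda_{\mathcal E}$ via the comparison lemma in \cite{MKS01}. A viscosity-based $\mathcal H$ would be a time-cumulative quantity and would not sit inside the pointwise energies $\mathcal E(t\pm)$ as the definition requires.
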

Next, we make use of the $\mathcal{K}$-convergence in the context of Young measures to conclude the following pointwise convergence of averages of approximate solutions to a dissipative martingale solution to \eqref{P1}.
\begin{Theorem}[\textbf{Point-wise convergence to a dissipative solution}]
	\label{dissipative solution}Suppose that the approximate solutions $\vu_n $ to \eqref{approximate equations} for the stochastic Euler system generates a dissipative measure-valued martingale solution $\big[ \big(\widetilde{\Omega},\widetilde{\mathcal{F}}, (\widetilde{\mathcal{F}}_{t})_{t\geq0},\widetilde{\mathbb{P}} \big); \widetilde{\mathcal{V}}^{\omega}_{t,x}, \widetilde{W} \big]$ in the sense of Definition~\ref{def:dissMartin}. Then there exists a sequence of approximate solutions $\widetilde{\vu}_n$ to \eqref{approximate equations} on probability space $(\widetilde{\Omega},\widetilde{\mathcal{F}}, (\widetilde{\mathcal{F}}_{t})_{t\geq0},\widetilde{\mathbb{P}} \big)$ for which following holds true,
	\item[1.]  $\widetilde{\p}$-a.s.
	$$\widetilde{\vu}_{n}\to\langle {\widetilde{\mathcal{V}}^{\omega}_{t,x}}; {\widetilde{\vu}} \rangle\,\,\mbox{in}\,\,\,C_w([0,T],L_{\dv}^{2}(\mathbb{T}^3)),$$ 
	\item[2.] $\widetilde{\p}$-a.s., there exists subsequence $\widetilde{\vu}_{n_k}$ such that
	\begin{equation*}
		\begin{aligned}
			\frac 1N \sum_{k=1}^N \widetilde{\vu}_{n_k} &\to\langle {\widetilde{\mathcal{V}}^{\omega}_{t,x}}; \widetilde{\vu} \rangle, \ \mbox{as $N \rightarrow \infty$ a.e. in} \,\,(0,T)\times\T^3.	
		\end{aligned}
	\end{equation*}		
\end{Theorem}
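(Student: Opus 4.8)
The plan is to fuse the stochastic compactness set-up underlying the previous theorem with a Komlós-type ($\mathcal{K}$-convergence) principle for Young measures. First I would recall that, by assumption, the SVM approximations $\vu_n$ generate — through the compactness argument of Section~\ref{Section 5} and the modified Jakubowski--Skorokhod theorem (Theorem~\ref{newth}) — the dissipative measure-valued martingale solution $\big[\big(\widetilde{\Omega},\widetilde{\mathcal F},(\widetilde{\mathcal F}_t)_{t\ge0},\widetilde{\mathbb P}\big);\widetilde{\mathcal V}^\omega_{t,x},\widetilde W\big]$. That same construction furnishes on $\widetilde\Omega$ a sequence $\widetilde\vu_n$ of approximate solutions with $\mathcal{L}aw(\widetilde\vu_n)=\mathcal{L}aw(\vu_n)$ and, $\widetilde{\mathbb P}$-a.s., $\widetilde\vu_n\to\langle\widetilde{\mathcal V}^\omega_{t,x};\widetilde\vu\rangle$ in $C_w([0,T];L^2_{\dv}(\T^3))$, the limit being the barycentre of the Young measure generated by $\{\widetilde\vu_n\}$ via the fundamental theorem on Young measures; this is exactly assertion~1. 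I would also record that the energy bound for the SVM scheme is uniform in $n$ and uniform in $\omega$ in the mean, so that $\sup_n\mathbb E\big[\sup_{t\in[0,T]}\|\widetilde\vu_n(t)\|_{L^2(\T^3)}^2\big]\le C$.

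For assertion~2, I would view $\{\widetilde\vu_n\}$ as a bounded sequence in $L^1\big(\widetilde\Omega\times(0,T)\times\T^3;\R^3\big)$ — boundedness following from the previous estimate and Hölder's inequality on the finite measure space — and apply the $\mathcal{K}$-convergence / Komlós theorem (Balder~\cite{Balder}, Feireisl et al.~\cite{FLM1}) on this \emph{single} product space. This produces a subsequence $\widetilde\vu_{n_k}$, \emph{independent of} $\widetilde\omega$, whose Cesàro means $\frac1N\sum_{k=1}^N\widetilde\vu_{n_k}$ converge $\big(\widetilde{\mathbb P}\otimes\dt\otimes\dx\big)$-a.e. to some $\overline{\vu}\in L^1$; by Fubini's theorem, $\widetilde{\mathbb P}$-a.s. this convergence holds a.e. in $(0,T)\times\T^3$.

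To identify the limit, I would use that assertion~1 together with the uniform $L^2$ bound gives $\widetilde\vu_n\rightharpoonup\langle\widetilde{\mathcal V}^\omega_{t,x};\widetilde\vu\rangle$ weakly in $L^2\big(\widetilde\Omega\times(0,T)\times\T^3\big)$, hence the Cesàro means converge weakly to the same limit; since those Cesàro means also converge weakly to $\overline{\vu}$ (by their a.e. convergence together with the $L^2$ bound, e.g. via Egorov's theorem), uniqueness of weak limits yields $\overline{\vu}=\langle\widetilde{\mathcal V}^\omega_{t,x};\widetilde\vu\rangle$, which is assertion~2.

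The hard part will not be the pathwise Komlós extraction, which is classical, but guaranteeing that a \emph{single} subsequence $n_k$ serves $\widetilde{\mathbb P}$-a.e. $\widetilde\omega$; carrying out the $\mathcal{K}$-convergence argument on $\widetilde\Omega\times(0,T)\times\T^3$ rather than on $(0,T)\times\T^3$ alone is what secures this, and it hinges on the $\omega$-uniform (in the mean) energy bound inherited from the spectral viscosity regularisation. The remaining measurability and progressive-measurability bookkeeping for $\overline{\vu}$, as well as the check that $C_w$-convergence is compatible with weak $L^2$-convergence on the product space, are then routine.
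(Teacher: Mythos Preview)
Your proposal is correct and follows essentially the same route as the paper: assertion~1 is read off directly from the Jakubowski--Skorokhod construction of Section~\ref{Section 5} (Proposition~\ref{prop:skorokhod1}), and assertion~2 is obtained via $\mathcal{K}$-convergence in the sense of Balder/Feireisl et al. The only difference is cosmetic: the paper simply invokes \cite[Proposition~2.4]{K1} for the Ces\`aro-mean convergence, whereas you unpack that black box by running Koml\'os on the product space $\widetilde\Omega\times(0,T)\times\T^3$ and then using Fubini to secure a single subsequence valid $\widetilde{\mathbb P}$-a.s.; this is exactly the content of the cited proposition, so the arguments coincide.
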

\begin{Theorem}[\textbf{Weak-strong uniqueness}] \label{Weak-Strong Uniqueness_01}
	Let $\big[ \big(\Omega,\mathcal{F}, (\mathcal{F}_{t})_{t\geq0},\mathbb{P} \big); \mathcal{V}^{\omega}_{t,x}, W \big]$ be a dissipative measure-valued martingale solution to the system \eqref{P1}. On the same stochastic basis $\big(\Omega,\mathcal{F}, (\mathcal{F}_{t})_{t\geq0},\mathbb{P} \big)$, let us consider the unique maximal strong pathwise solution in sense of Definition \ref{def:maxsol} to the Euler system \eqref{P1} given by 
	$(\bar{{\bf u}},(\mathfrak{t}_R)_{R\in\mn},\mathfrak{t})$ driven by the same cylindrical Wiener process $W$ with the initial data $\bar{\bf u}(0)$ satisfies 
	\begin{equation*} 
		\mathcal{V}^{\omega}_{0,x}= \delta_{\bar{\bf u}(0,x)}, \,\p-\mbox{a.s.,}\, \mbox{for a.e. } x \in \T^3.
	\end{equation*}
	Then, $\p$-a.s. a.e. $t\in[0,T]$ $\mathcal{H}(t\wedge\mathfrak{t}_R)=0$, and $\p-\mbox{a.s.,}$
	\begin{equation*}
		\mathcal{V}^{\omega}_{t \wedge \mathfrak{t}_R,x}	= \delta_{\bar{\bf u}(t \wedge \mathfrak{t}_R,x)}, \,\,  \mbox{for a.e. }(t,x)\in (0,T)\times \T^3.
	\end{equation*}
\end{Theorem}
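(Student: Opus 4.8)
The plan is to run a stochastic relative energy (modulated energy) argument, comparing the dissipative measure-valued martingale solution $\mathcal{V}^{\omega}$ with the maximal strong pathwise solution $\bar{\bf u}$. The quantity to track is
\begin{equation*}
	\mathcal{E}_{\mathrm{rel}}(t) := \int_{\T^3} \left\langle \mathcal{V}^{\omega}_{t,x}; \tfrac12 |\vu - \bar{\bf u}(t,x)|^2 \right\rangle \dx + \mathcal{H}(t).
\end{equation*}
Expanding the square writes $\mathcal{E}_{\mathrm{rel}}$ as the measure-valued kinetic energy plus $\mathcal{H}$ (bounded from above through the one-sided inequality \eqref{third condition measure-valued solution}, read via its Lebesgue-point form \eqref{energy_001}), the cross term $-\int_{\T^3}\langle \mathcal{V}^{\omega}_{t,x};\vu\rangle\cdot\bar{\bf u}(t,x)\,\dx$, and $\int_{\T^3}\tfrac12|\bar{\bf u}(t,x)|^2\,\dx$. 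I would localize everything by the stopping times $\mathfrak{t}_R$, so that $\sup_{t\le\mathfrak{t}_R}\|\bar{\bf u}(t)\|_{W^{1,\infty}(\T^3)}\le R$ and $\bar{\bf u}$, $\nabla_x\bar{\bf u}$ serve as bounded, divergence-free test fields.

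I would then assemble $\mathcal{E}_{\mathrm{rel}}$ from three ingredients. First, It\^o's formula for the strong solution (Definition~\ref{def:strsol}) gives $\D\!\int_{\T^3}\tfrac12|\bar{\bf u}|^2\,\dx = \tfrac12\sum_{k\ge1}\int_{\T^3}|\mathcal{P}_H\sigma_k(\bar{\bf u})|^2\,\dx\,\dt + \int_{\T^3}\bar{\bf u}\cdot\mathcal{P}_H\sigma(\bar{\bf u})\,\dx\,\D W$, the convective part vanishing since $\Div\bar{\bf u}=0$. Second, the momentum identity \eqref{second condition measure-valued solution} tested with $\bm{\varphi}=\bar{\bf u}(t,\cdot)$, combined with the equation for $\bar{\bf u}$ via It\^o's product rule, gives the evolution of $\int_{\T^3}\langle \mathcal{V}^{\omega}_{t,x};\vu\rangle\cdot\bar{\bf u}\,\dx$, including the cross-variation term $-\sum_{k\ge1}\int_{\T^3}\langle\mathcal{V}^{\omega}_{t,x};\mathcal{P}_H\sigma_k(\vu)\rangle\cdot\mathcal{P}_H\sigma_k(\bar{\bf u})\,\dx\,\dt$ and a term $\int_{\T^3}\nabla_x\bar{\bf u}:d\lambda_{\mathcal C}$. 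Third, the measure-valued energy inequality \eqref{third condition measure-valued solution} itself. Adding the three, the deterministic fluxes recombine — modulo the concentration defect — into $-\int_{\T^3}\langle\mathcal{V}^{\omega}_{t,x};(\vu-\bar{\bf u})\otimes(\vu-\bar{\bf u})\rangle:\nabla_x\bar{\bf u}\,\dx$, bounded by $R\int_{\T^3}\langle\mathcal{V}^{\omega}_{t,x};|\vu-\bar{\bf u}|^2\rangle\dx$; the stochastic corrections complete a square and are dominated by $\tfrac12\sum_{k\ge1}\int_{\T^3}\langle\mathcal{V}^{\omega}_{t,x};|\sigma_k(\vu)-\sigma_k(\bar{\bf u})|^2\rangle\dx\,\dt$, which by \eqref{FG2} and $\|\mathcal{P}_H\|\le1$ is $\le D_1\int_{\T^3}\langle\mathcal{V}^{\omega}_{t,x};|\vu-\bar{\bf u}|^2\rangle\dx\,\dt$; and the defect measures are absorbed using $\|\nabla_x\bar{\bf u}\|_{L^\infty}\le R$ together with \eqref{fourth condition measure-valued solutions}.

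Taking expectations — the localized stochastic integrals are genuine martingales after stopping at $\mathfrak{t}_R$ — and using that $\mathcal{V}^{\omega}_{0,x}=\delta_{\bar{\bf u}(0,x)}$ makes the initial relative energy vanish (its kinetic part matches $\mathcal{E}(0-)=\int_{\T^3}\tfrac12|\vu_0|^2\dx$ and $\mathcal{H}\ge0$ leaves no initial defect), one is left with
\begin{equation*}
	\mathbb{E}\,\mathcal{E}_{\mathrm{rel}}(t\wedge\mathfrak{t}_R) \le C_R \int_0^t \mathbb{E}\,\mathcal{E}_{\mathrm{rel}}(s\wedge\mathfrak{t}_R)\,\D s,
\end{equation*}
with $C_R$ depending on $R$, $D_1$ and the constant in \eqref{fourth condition measure-valued solutions}. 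Gronwall's lemma forces $\mathbb{E}\,\mathcal{E}_{\mathrm{rel}}(t\wedge\mathfrak{t}_R)=0$ for a.e. $t$; since $\langle\mathcal{V}^{\omega}_{t,x};\tfrac12|\vu-\bar{\bf u}|^2\rangle$ and $\mathcal{H}$ are nonnegative, this yields $\mathcal{H}(t\wedge\mathfrak{t}_R)=0$ and $\langle\mathcal{V}^{\omega}_{t\wedge\mathfrak{t}_R,x};|\vu-\bar{\bf u}(t\wedge\mathfrak{t}_R,x)|^2\rangle=0$ a.e., hence $\mathcal{V}^{\omega}_{t\wedge\mathfrak{t}_R,x}=\delta_{\bar{\bf u}(t\wedge\mathfrak{t}_R,x)}$, $\p$-a.s., as claimed.

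The main obstacles I anticipate are: (i) justifying that $\bar{\bf u}(t,\cdot)$ — only $W^{m,p}_{\Div}$-valued and adapted — is an admissible test field in \eqref{second condition measure-valued solution} and that It\^o's product rule applies to $t\mapsto\langle\mathcal{V}^{\omega}_{t,x};\vu\rangle\cdot\bar{\bf u}(t,\cdot)$; this calls for mollifying $\bar{\bf u}$ in space, running the computation for the smooth fields, and passing to the limit using the uniform $W^{1,\infty}$ bound on $[0,\mathfrak{t}_R]$. (ii) The bookkeeping of the noise corrections: the momentum equation carries $\mathcal{P}_H\sigma$, whereas \eqref{third condition measure-valued solution} contains $\langle\mathcal{V}^{\omega};|\sigma_k(\vu)|^2\rangle$ together with a Helmholtz/Jensen-type correction term, so one must check that completing the square in the stochastic part genuinely produces $\sum_k\langle\mathcal{V}^{\omega};|\mathcal{P}_H(\sigma_k(\vu)-\sigma_k(\bar{\bf u}))|^2\rangle$ with the right sign and that all leftovers are of lower order or absorbed into the defect measures — getting this sign correct is, I expect, the step demanding the most care. (iii) Dealing with the $\liminf$-regularized one-sided limits $\mathcal{E}(t\pm)$ and the endpoint $s=0$, where the precise form of the prescribed initial energy and the remark following Definition~\ref{def:dissMartin} come into play.
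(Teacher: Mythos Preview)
Your proposal is correct and follows essentially the same route as the paper. The paper packages your ingredient (i) as a separate ``weak It\^o formula'' lemma (mollify in space, apply the It\^o product rule component-wise in an $L^2$-basis, pass to the limit) and then combines your three ingredients into a single ``relative energy inequality'' (Proposition~\ref{relen}); from there the argument is exactly your Gronwall step, with the same estimates on the convective remainder, the Lipschitz noise bound \eqref{FG2}, and the defect control \eqref{fourth condition measure-valued solutions}. Your anticipated obstacle (ii) is precisely the computation the paper carries out in \eqref{rt}, where completing the square produces the nonpositive leftover $-\tfrac12\sum_k\int_{\T^3}\big|\mathcal{Q}_H\sigma_k(\bar{\bf u})-\mathcal{Q}_H\langle\mathcal{V}^\omega;\sigma_k(\vu)\rangle\big|^2\dx$, which is simply dropped.
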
 

\noindent Finally, making use of the weak (measure-valued)--strong uniqueness principle (cf. Theorem~\ref{Weak-Strong Uniqueness_01}), we prove the following result justifying the strong convergence to the regular solution.
\begin{Theorem}[\textbf{Strong convergence to regular solution}]
\label{T_ccE}
Let $u_0\in L_{\dv}^2(\T^3)$. Suppose that the approximate solutions $\vu_n$ to \eqref{approximate equations} for the stochastic Euler system generate a dissipative measure-valued martingale solution $\big[ \big(\widetilde{\Omega},\widetilde{\mathcal{F}}, (\widetilde{\mathcal{F}}_{t})_{t\geq0},\widetilde{\mathbb{P}} \big); \widetilde{\mathcal{V}}^{\omega}_{t,x}, \widetilde{W} \big]$  in the sense of Definition~\ref{def:dissMartin}.
In addition, let the Euler equations \eqref{P1} possess the unique strong (continuously differentiable) solution $(\bar{\vu}, (\mathfrak{t}_R)_{R\in\mn}, \mathfrak{t})=(\bar{\vu}, (\mathfrak{t}_R)_{R\in\mn}, \mathfrak{t})$, emanating form the initial data \eqref{P1}. Then there exists a sequence of approximate solutions $\widetilde{\vu}_n$ to \eqref{approximate equations} on probability space $(\widetilde{\Omega},\widetilde{\mathcal{F}}, (\widetilde{\mathcal{F}}_{t})_{t\geq0},\widetilde{\mathbb{P}} \big)$  such that $\widetilde{\p}$-a.s.
\begin{equation*}
	\begin{aligned}
		\widetilde{\vu}_{n}(\cdot\wedge\mathfrak{t}_R) &\to \bar{\vu}(\cdot\wedge\mathfrak{t}_R) \ \mbox{weakly-(*) in} \ L^{\infty}(0,T;L_{\dv}^2(\T^3))
		\ \mbox{and strongly in}\ L^1((0,T) \times \T^3).
		\end{aligned}
	\end{equation*}
\end{Theorem}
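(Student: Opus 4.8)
The strategy is to run the stochastic compactness argument behind Theorem~\ref{dissipative solution}, then to pin down the limiting Young measure by the weak (measure-valued)--strong uniqueness principle of Theorem~\ref{Weak-Strong Uniqueness_01}, and finally to upgrade the weak convergence so obtained to strong $L^1$-convergence using the elementary fact that a Young measure equal to a Dirac mass forces convergence in measure. First I would apply Theorem~\ref{dissipative solution}: the spectral viscosity approximations $\vu_n$ to \eqref{approximate equations} generate, on a new stochastic basis $\big(\widetilde{\Omega},\widetilde{\mathcal{F}},(\widetilde{\mathcal{F}}_t)_{t\ge0},\widetilde{\mathbb{P}}\big)$ with cylindrical Wiener process $\widetilde{W}$, a dissipative measure-valued martingale solution $\widetilde{\mathcal{V}}^{\omega}_{t,x}$ and a (sub)sequence $\widetilde{\vu}_n$, with $\mathcal{L}[\widetilde{\vu}_n]=\mathcal{L}[\vu_n]$, such that $\widetilde{\mathbb{P}}$-a.s.
$$\widetilde{\vu}_n\longrightarrow \langle \widetilde{\mathcal{V}}^{\omega}_{t,x};\widetilde{\vu}\rangle \quad\text{in } C_w\big([0,T],L_{\dv}^2(\T^3)\big),$$
$\widetilde{\mathcal{V}}^{\omega}$ being precisely the Young measure generated by the sequence $\widetilde{\vu}_n$, and with the uniform bounds $\mathbb{E}\big[\sup_{t\in(0,T)}\|\widetilde{\vu}_n(t)\|_{L_{\dv}^2(\T^3)}^p\big]\le C_p$ for every $1\le p<\infty$. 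Since $\vu_0\in L_{\dv}^2(\T^3)$ is deterministic and $\widetilde{\vu}_n(0)$ is merely its spectral truncation, the initial Young measure is the Dirac mass $\widetilde{\mathcal{V}}^{\omega}_{0,x}=\delta_{\vu_0(x)}$, $\widetilde{\mathbb{P}}$-a.s.

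Next I would invoke the hypothesis that \eqref{P1} possesses, on the basis $\big(\widetilde{\Omega},\widetilde{\mathcal{F}},(\widetilde{\mathcal{F}}_t)_{t\ge0},\widetilde{\mathbb{P}},\widetilde{W}\big)$ (recall that probabilistically strong solutions may be built on any prescribed basis), the unique maximal strong pathwise solution $(\bar{\vu},(\mathfrak{t}_R)_{R\in\mn},\mathfrak{t})$ emanating from $\vu_0$, so that $\bar{\vu}(0)=\vu_0$ and hence $\widetilde{\mathcal{V}}^{\omega}_{0,x}=\delta_{\bar{\vu}(0,x)}$. Theorem~\ref{Weak-Strong Uniqueness_01} then yields, $\widetilde{\mathbb{P}}$-a.s., that $\mathcal{H}(t\wedge\mathfrak{t}_R)=0$ for a.e.\ $t$ and
$$\widetilde{\mathcal{V}}^{\omega}_{t\wedge\mathfrak{t}_R,x}=\delta_{\bar{\vu}(t\wedge\mathfrak{t}_R,x)}\quad\text{for a.e. }(t,x)\in(0,T)\times\T^3 .$$
In particular the barycenter satisfies $\langle\widetilde{\mathcal{V}}^{\omega}_{t\wedge\mathfrak{t}_R,x};\widetilde{\vu}\rangle=\bar{\vu}(t\wedge\mathfrak{t}_R,x)$, and, by the defect bound \eqref{fourth condition measure-valued solutions}, $\mathcal{H}\equiv0$ on $(0,\mathfrak{t}_R)$ forces $|\lambda_{\mathcal{C}}|=|\lambda_{\mathcal{D}}|=0$ there, so there is no oscillation/concentration defect up to $\mathfrak{t}_R$. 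Composing the $C_w$-convergence with the pathwise time change $t\mapsto t\wedge\mathfrak{t}_R(\widetilde{\omega})$ and using the above moment bound, we obtain $\widetilde{\vu}_n(\cdot\wedge\mathfrak{t}_R)\to\bar{\vu}(\cdot\wedge\mathfrak{t}_R)$ in $C_w([0,T],L_{\dv}^2(\T^3))$, hence weakly-$(*)$ in $L^\infty(0,T;L_{\dv}^2(\T^3))$, $\widetilde{\mathbb{P}}$-a.s.; this is the first asserted mode of convergence.

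It remains to promote this to strong convergence in $L^1((0,T)\times\T^3)$, which is the heart of the matter. On $(0,\mathfrak{t}_R)\times\T^3$ the Young measure generated by $\widetilde{\vu}_n$ is a.e.\ the Dirac mass $\delta_{\bar{\vu}}$; by the standard characterisation of Young measures this means $\widetilde{\vu}_n\to\bar{\vu}$ \emph{in measure} on $(0,\mathfrak{t}_R)\times\T^3$, $\widetilde{\mathbb{P}}$-a.s., and, together with the (a.s., along the chosen subsequence) uniform $L^2$-bound — which supplies $L^1$-equiintegrability over the bounded set $(0,\mathfrak{t}_R)\times\T^3$ — Vitali's theorem gives $\widetilde{\vu}_n\to\bar{\vu}$ strongly in $L^1((0,\mathfrak{t}_R)\times\T^3)$ (equivalently one may expand $\int_0^{\mathfrak{t}_R}\!\!\int_{\T^3}|\widetilde{\vu}_n-\bar{\vu}|^2\,\dx\,\dt$ and use weak convergence for the cross term together with the absence of concentration, $\langle\widetilde{\mathcal{V}}^{\omega}_{t\wedge\mathfrak{t}_R,x};|\widetilde{\vu}|^2\rangle=|\bar{\vu}(t\wedge\mathfrak{t}_R,x)|^2$, to get $\|\widetilde{\vu}_n-\bar{\vu}\|_{L^2((0,\mathfrak{t}_R)\times\T^3)}\to0$). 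There remains the contribution of $(\mathfrak{t}_R,T)$, on which $\widetilde{\vu}_n(\cdot\wedge\mathfrak{t}_R)\equiv\widetilde{\vu}_n(\mathfrak{t}_R)$; here I would exploit the $L^2$-convergence just obtained to pass to the limit in the energy inequality satisfied by $\widetilde{\vu}_n$ on $[0,\mathfrak{t}_R]$ (the noise terms converge thanks to \eqref{FG2} and the strong $L^2$-convergence of $\widetilde{\vu}_n$, the viscous dissipation being nonnegative), compare with the energy \emph{equality} enjoyed by the smooth solution $\bar{\vu}$ to deduce $\limsup_n\|\widetilde{\vu}_n(\mathfrak{t}_R)\|_{L_{\dv}^2}\le\|\bar{\vu}(\mathfrak{t}_R)\|_{L_{\dv}^2}$, and combine this with weak lower semicontinuity to get strong $L^2(\T^3)$-convergence at $t=\mathfrak{t}_R$, whence $(T-\mathfrak{t}_R)\|\widetilde{\vu}_n(\mathfrak{t}_R)-\bar{\vu}(\mathfrak{t}_R)\|_{L^1(\T^3)}\to0$. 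Adding the two pieces yields the claimed strong $L^1((0,T)\times\T^3)$-convergence of $\widetilde{\vu}_n(\cdot\wedge\mathfrak{t}_R)$ to $\bar{\vu}(\cdot\wedge\mathfrak{t}_R)$.

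The step I expect to be the main obstacle is the careful handling of the random stopping time $\mathfrak{t}_R$ throughout these pathwise a.s.\ limit passages — ensuring that the Young-measure generation delivered by the stochastic compactness argument is compatible with the stopped process, that the time change $t\mapsto t\wedge\mathfrak{t}_R$ interacts correctly with the various convergences, and, above all, justifying rigorously the strong convergence ``at'' the stopping time $\mathfrak{t}_R$ needed to control the frozen tail on $(\mathfrak{t}_R,T)$; the rest is a by-now-standard marriage of weak (measure-valued)--strong uniqueness with the fact that a Dirac Young measure forces convergence in measure.
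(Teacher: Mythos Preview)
Your proposal is correct and follows essentially the same route as the paper: identify the limiting Young measure as a Dirac via Theorem~\ref{Weak-Strong Uniqueness_01}, deduce convergence in measure from the standard Young-measure characterisation, and upgrade to strong $L^1$ by Vitali using the uniform $L^2$ bounds. The only notable difference is that you split $(0,T)$ at the stopping time and handle the frozen tail $(\mathfrak{t}_R,T)$ by a separate energy-comparison argument to force strong convergence at $t=\mathfrak{t}_R$, whereas the paper works directly with the stopped process $\widetilde{\vu}_n(\cdot\wedge\mathfrak{t}_R)$ on the whole interval and asserts (citing \cite[Proposition~4.16]{Balder}) that the Young measure it generates is $\delta_{\bar{\vu}(\cdot\wedge\mathfrak{t}_R)}$ on all of $(0,T)\times\T^3$, then applies Vitali in one stroke; your treatment of the tail is more explicit on precisely the point you flagged as the main obstacle, while the paper's argument is shorter but leaves that identification on $(\mathfrak{t}_R,T)$ implicit.
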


\section{Fourier method for the incompressible Euler system}\label{Section 4}
We demonstrate the abstract theory applying the results to the approximate solutions resulting from Fourier approximation of the Euler system.
\begin{itemize}
	\item \textbf{Existence of approximate solutions.} First we recall the existence of the Fourier approximate solutions $\vu_n$ of semi-discrete scheme in Fourier mode for any discretization $n\in\mathbb{N}$. We introduce here a kind of spectrally accurate vanishing viscosity to augment the Fourier approximation of such nonlinear equations. 
	\item \textbf{Stability and a priori bounds.} We assure that the scheme is energy dissipative. We recover required energy bounds from energy inequality.
	\item \textbf{Consistency.} We provide a consistency formulation and establish suitable bounds on the error terms.
	\item \textbf{Convergence of spectral method.} Using the stochastic compactness technique we show that approximate solutions generate a {\em dissipative measure valued martingale} solution. The proof relies on a compactness argument combined with Jakubowski–Skorokhod’s representation theorem. Due to the limited compactness of the Euler system it is necessary to work with dissipative rather than analytically weak
	solutions.
%
\end{itemize}
\subsection{Preliminaries for spectral method}
We begin by reviewing some basic tools associated with spectral method.

\noindent
 \textbf{Fourier Coefficient:} Consider the spatial Fourier expansion $\vu(x,t)=\sum_{k}\widehat{\vu}_k(t)e^{ik\cdot x}$ with coefficients $\widehat{\vu}_k$ given by
$$\widehat{\vu}_{k}(t)=\int_{\T^3}\vu(x,t)e^{-ik\cdot x}\dx,$$

\noindent
\textbf{Truncation Operator:} Truncation Operator $\mathcal{T}_n$ project vector field of the form $\vu=\sum_{k}\widehat{\vu}_k(t)e^{ik\cdot x}$ to  $\sum_{|k|\,\le n}\widehat{\vu}_k(t)e^{ik\cdot x}$ (only Fourier modes below thresold $n$). That is
$$\mathcal{T}_n(\vu)=\sum_{|k|\,\le n}\hat{\vu}_k(t)e^{ik\cdot x}$$

\noindent
\textbf{Projection Operator:} We know Helmholtz projection project vector field $\vu=\sum_{k}\widehat{\vu}_k(t)e^{ik\cdot x}$ to divergence-free vector field given by
$$\mathcal{P}_H(\vu)=\sum_{k}\big(\widehat{\vu}_k-\frac{\widehat{\vu}_k\cdot k}{|k|^2}\big)e^{ik\cdot x}$$
Here we consider finite trunction of Holmoltz Projection as $\mathcal{P}_N$ given by
\begin{equation*}
	\mathcal{P}_N(\vu)=\mathcal{T}_n\big(\mathcal{P}_H(\vu)\big)=\sum_{|k|\le N}\big(\widehat{\vu}_k-\frac{\widehat{\vu}_k\cdot k}{|k|^2}\big)e^{ik\cdot x}
\end{equation*}
yielding a divergene-free vector field with Fourier modes $|k|\le N$.
We also define 
$$\mathcal{Q}_n(\vu)=\mathcal{T}_n(\vu)-\mathcal{P}_m(\vu)$$
where $m\textless n$ and $\mathcal{Q}_n$ shows the projection onto upper modes.
\subsection{Semi-discrete scheme for spectral method}
We propose a spectral viscosity method (SVM) to approximate the stochastic incompressible Euler
equations and prove that SVM solution converges to a dissipative measure-valued martingale solution.

\noindent
\textbf{Motivation:} To motivate the semi-discrete scheme, let $(\vu,\Pi)$ be solutions to \eqref{P1} with periodic boundary. We focus on the spectral method based on the Fourier expansion and at the heart of a spectral method lies the assumption that the solutions $\vu(x,t)$ can be expressed by a series of smooth basis functions. So we consider the spatial Fourier expansion $\vu(x)=\sum_k \widehat{\vu}_k e^{ik\cdot x}$, $B=\vu\cdot\Grad\vu=\sum_k \widehat{B}_k(\vu) e^{ik\cdot x}$ and ${\mathcal{\sigma}}(u)=\sum_k \widehat{\mathcal{\sigma}}_k(\vu) e^{ik\cdot x}$. Therefore divergence free condition gives that
\begin{equation}\label{P3}
	i\sum_k({\widehat{\vu}_k}\cdot k) e^{ik\cdot x}=0\iff \widehat{\vu}_k\cdot k=0 \,\,\,\forall\,\, k
\end{equation}
In terms of Fourier coefficients we have \eqref{P1} in this form
\begin{equation}\label{P4}
	\D {\widehat{\vu}_k}(t) + \widehat{B}_k(\vu)\dt + ik\widehat{\Pi}_k \dt = \widehat{\mathcal{\sigma}}_k(\vu)\D W 
\end{equation}
Take dot product of \eqref{P4} with k and used \eqref{P3}, therefore
\begin{equation*}
	\widehat{B}_k(\vu)\cdot k\dt+i|k|^2 \widehat{\Pi}_k\dt=\widehat{\mathcal{\sigma}}_k(\vu)\cdot k \D W
\end{equation*}
Eliminate pressure term from \eqref{P4} using above expression, 
\begin{equation}
	\D\widehat{\vu}_k +(\widehat{B}_k(\vu)-\frac{\widehat{B}_k(\vu)\cdot k}{|k|^2}k)\dt=(\widehat{\mathcal{\sigma}}_k(\vu)-\frac{\widehat{\mathcal{\sigma}}_k(\vu)\cdot k}{|k|^2}k)\D W
\end{equation}
For the coefficient $\widehat\vu_k$ with k=0, we can assume that $\int_{\T^3}\vu_0\dx=0.$

\noindent
\textbf{Semi-discrete scheme:} To obtain a semi discretized approximation to system \eqref{P1}. We restrict our attention to only the Fourier modes below some thresold $n$. We consider velocity field of the form $\vu_n=\sum_{|k|\le\,n}\widehat{\vu}_k\,e^{ik\cdot x}$ and we have to project the non-linear term to this space.
 
\noindent In the following, we will consider the spectral vanishing viscosity scheme for the stochastic incompressible Euler equations. Consider the following approximation of the stochastic incompressible Euler Equations
  \begin{equation}\label{approximate equations}
  	\begin{cases} 
  		\D \vu_n+ \mathcal{P}_n(\vu_n\cdot\nabla\vu_n)\dt=\varepsilon\,\dv(\mathcal{Q}_n\nabla \vu_n)\dt + \mathcal{P}_n\mathcal{\sigma}(\vu_n)\D W \\ 
  			\vu_n(0)=\mathcal{T }_n(\vu_0)\\ 
  	\end{cases}
  \end{equation}
  In this scheme, we adopt a small $\varepsilon:=\varepsilon(n)$ $(\varepsilon(n)\to 0\,\, \text{as}\,\, n\to \infty)$ and an integer $m\textless\,n$. Here the integer $m$ handle as a thresold between small and large Fourier modes. We also added a small amount of numerical viscosity to ensure stability of the resulting scheme. The idea behind the SVM is that dissipation is only applied on the upper part of the spectrum $(m\textless n)$. System includes following  Naiver-Stokes system (for example $m=0$).
\begin{equation}
	\begin{cases} 
	\D \vu_n+ \mathcal{P}_n(\vu_n\cdot\nabla\vu_n)\dt=\varepsilon\,\Delta\vu_n \dt + \mathcal{P}_n\mathcal{\sigma}(\vu_n)\D W \\ 
	\vu_n(0)=\mathcal{T}_n(\vu_0)\\ 
	\end{cases}
\end{equation}
\textbf{Existence of approximate solutions $\vu_n$:} The existence of solutions $\vu_n$ to \eqref{approximate equations} is classical and relies on a priori bounds that are established using the cancellation property. For a proof, one can follows the similar approach as proposed in \cite{Flandoli}.
\subsection{Stability and energy bounds}
	\textbf{Energy inequality to $\vu_n$:} We derive the energy inequality from the scheme. The energy inequality is a direct consequence of It\^o formula.
\begin{Lemma}
	Let $\vu_n$ be the solution of the semi-discrete scheme \eqref{approximate equations}. Then, $\p$-a.s., for all $s\,\textless\,t$,
	\begin{align}\label{L0}
		&\frac{1}{2}\|\vu_n(t)\|_{L^2(\T^3)}^2 +  \varepsilon\int_s^t\|\mathcal{Q}_n(\nabla\vu_n(\tau))\|_{L^2(\mathbb{T}^3)}^2\D \tau\,\notag\\
		&=\frac{1}{2}\|\vu_n(s)\|_{L^2(\mathbb{T}^3)}^2 + \int_s^t\int_{\mathbb{T}^3}\vu_n(\tau)\mathcal{P}_n(\mathcal{\sigma}(\vu_n(\tau)))\D W(\tau) + \int_s^t\|\mathcal{P}_n(\vu_n(\tau))\|_{L^2(\T^3)}^2\dx\D \tau.
	\end{align}
In particular, $\p$-a.s., for all $t\in[0,T]$
\begin{align}\label{L}
	&\frac{1}{2}\|\vu_n(t)\|_{L^2(\T^3)}^2 +  \varepsilon\int_0^t\|\mathcal{Q}_n(\nabla\vu_n(\tau))\|_{L^2(\mathbb{T}^3)}^2\D \tau\,\notag\\
	&\le\,\frac{1}{2}\|\vu_0(s)\|_{L^2(\mathbb{T}^3)}^2 + \int_0^t\int_{\mathbb{T}^3}\vu_n(\tau)\mathcal{P}_n(\mathcal{\sigma}(\vu_n(\tau)))\D W(\tau) + \int_0^t\|\mathcal{P}_n(\vu_n(\tau))\|_{L^2(\T^3)}^2\dx\D \tau.
\end{align}
\end{Lemma}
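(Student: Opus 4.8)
The plan is to apply It\^o's formula to the functional $\vu\mapsto\tfrac12\|\vu\|_{L^2(\T^3)}^2$ along the process $\vu_n$. The key preliminary observation is that \eqref{approximate equations} is a finite-dimensional SDE on $H_n:=\mathcal{P}_n L^2_{\dv}(\T^3)$, the space of divergence-free trigonometric polynomials with Fourier modes $|k|\le n$: indeed $\vu_n(0)=\mathcal{T}_n(\vu_0)$ is divergence-free (as $\vu_0\in L^2_{\dv}(\T^3)$), and each term on the right of \eqref{approximate equations} maps into $H_n$, so the flow stays in $H_n$. On $H_n$ the drift $-\mathcal{P}_n(\vu_n\cdot\nabla\vu_n)+\varepsilon\,\dv(\mathcal{Q}_n\nabla\vu_n)$ is polynomial, and the diffusion $\mathcal{P}_n\sigma(\vu_n)\colon\mathfrak{U}\to H_n$ is Hilbert--Schmidt with $\sum_{k\ge1}\|\mathcal{P}_n\sigma_k(\vu_n)\|_{L^2}^2\le D_0(|\T^3|+\|\vu_n\|_{L^2}^2)$ by \eqref{FG1}; hence It\^o's formula applies directly (no localization) to the smooth functional $F(\vu)=\tfrac12\|\vu\|_{L^2}^2$, for which $DF(\vu)=\vu$ and $D^2F=\mathrm{Id}$.

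Applying It\^o's formula over $[s,t]$ gives
\begin{align*}
	\tfrac12\|\vu_n(t)\|_{L^2}^2 &= \tfrac12\|\vu_n(s)\|_{L^2}^2 + \int_s^t\big\langle\vu_n,\,-\mathcal{P}_n(\vu_n\cdot\nabla\vu_n)+\varepsilon\,\dv(\mathcal{Q}_n\nabla\vu_n)\big\rangle\,\D\tau\\
	&\quad+\int_s^t\big\langle\vu_n,\,\mathcal{P}_n\sigma(\vu_n)\,\D W\big\rangle+\tfrac12\sum_{k\ge1}\int_s^t\|\mathcal{P}_n\sigma_k(\vu_n)\|_{L^2}^2\,\D\tau.
\end{align*}
I would then simplify the two drift contributions. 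For the convective term, self-adjointness of the orthogonal projection $\mathcal{P}_n$ together with $\vu_n\in H_n=\mathrm{range}(\mathcal{P}_n)$ gives $\langle\vu_n,\mathcal{P}_n(\vu_n\cdot\nabla\vu_n)\rangle=\langle\vu_n,\vu_n\cdot\nabla\vu_n\rangle=\tfrac12\int_{\T^3}\vu_n\cdot\nabla|\vu_n|^2\dx=-\tfrac12\int_{\T^3}(\dv\vu_n)|\vu_n|^2\dx=0$, i.e.\ the cancellation property of the nonlinearity (using $\dv\vu_n=0$ and that $\T^3$ has no boundary). For the viscous term, integration by parts on $\T^3$ followed by self-adjointness and idempotency of the upper-mode projection $\mathcal{Q}_n$ gives $\varepsilon\langle\vu_n,\dv(\mathcal{Q}_n\nabla\vu_n)\rangle=-\varepsilon\langle\nabla\vu_n,\mathcal{Q}_n\nabla\vu_n\rangle=-\varepsilon\|\mathcal{Q}_n\nabla\vu_n\|_{L^2}^2$. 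Moving this term to the left-hand side yields \eqref{L0}, with the stochastic term equal to $\int_s^t\int_{\T^3}\vu_n\cdot\mathcal{P}_n\sigma(\vu_n)\dx\,\D W$. The estimate \eqref{L} then follows by taking $s=0$, bounding $\|\vu_n(0)\|_{L^2}=\|\mathcal{T}_n\vu_0\|_{L^2}\le\|\vu_0\|_{L^2}$ by Parseval, and keeping the nonnegative dissipation term on the left; continuity of $t\mapsto\vu_n(t)$ in $H_n$ and of the stochastic integral upgrades the $\p$-a.s.\ statements from fixed $s,t$ to all $s<t$ (resp.\ all $t\in[0,T]$).

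Because we work in a finite-dimensional space, the argument is essentially a bookkeeping exercise and I expect the only real (if modest) obstacle to be the precise handling of the spectral operators: (i) confirming that $\vu_n(\cdot)$ remains divergence-free, which underlies the convective cancellation and hinges on $\mathcal{Q}_n\nabla\vu_n$ having divergence-free columns; and (ii) verifying that $\mathcal{Q}_n=\mathcal{T}_n-\mathcal{P}_m$ is a genuine orthogonal projection on the relevant $L^2$ space (true since $\mathrm{range}(\mathcal{P}_m)\subset\mathrm{range}(\mathcal{T}_n)$ for $m<n$), so that $\langle\nabla\vu_n,\mathcal{Q}_n\nabla\vu_n\rangle=\|\mathcal{Q}_n\nabla\vu_n\|_{L^2}^2$ and \eqref{L0} holds as an equality rather than merely an inequality. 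Everything else is a direct, routine application of It\^o's formula.
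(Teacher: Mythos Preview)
Your proposal is correct and follows essentially the same route as the paper: apply It\^o's formula to $F(\vu)=\tfrac12\|\vu\|_{L^2}^2$, kill the convective term via the self-adjointness of $\mathcal{P}_n$ and the divergence-free cancellation, rewrite the viscous term as $-\varepsilon\|\mathcal{Q}_n\nabla\vu_n\|_{L^2}^2$ by integration by parts and idempotency of $\mathcal{Q}_n$, and finish \eqref{L} with $\|\mathcal{T}_n\vu_0\|_{L^2}\le\|\vu_0\|_{L^2}$. Your additional care in justifying the finite-dimensional setting and the projection properties of $\mathcal{Q}_n$ goes slightly beyond what the paper spells out, but the argument is the same.
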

\begin{proof}
	Apply It\^o formula to $F(\vu)=\frac{1}{2}{\|\vu\|_{L^2(\mathbb{T}^3)}^2}$, we get $\p$-a.s., for all $ s\,\textless\,t\in[0,T]$
	\begin{align}\label{L1}
		\frac{1}{2}\|\vu_n(t)\|_{L^2(\T^3)}^2=&\frac{1}{2}\|\vu_n(s)\|_{L^2(\T^3)}^2-\int_{s}^t\int_{\T^3}\big( \vu_n\cdot\mathcal{P}_n(\vu_n\cdot\nabla\vu_n)-\varepsilon\vu_n\cdot\dv(\mathcal{Q}_n\nabla\vu_n)\big)\dx \D \tau\notag\\&+\int_s^t\int_{\T^3}\vu_n\mathcal{P}_n(\mathcal{\sigma}(\vu_n))\D x\D W(\tau)+\int_s^t\|\mathcal{P}_n(\mathcal{\sigma}(\vu_n))\|_{L^2(\T^3)}^2d\tau
		\end{align}
	\begin{align}\label{L2}
\int_{\T^3} \vu_n\cdot\mathcal{P}_n(\vu_n\cdot\nabla\vu_n)\dx=\int_{\T^3}\vu_n\cdot(\vu_n\cdot\nabla\vu_n)\dx=\int_{\T^3}\dv(\frac{1}{2}|\vu_n|^2\vu_n)\dx=0
	\end{align}
\begin{align}\label{L3}
	\varepsilon\int_{\T^3}\vu_n\cdot\dv(\mathcal{Q}_n\nabla\vu_n)\dx=&-\varepsilon\int_{\T^3}\nabla\vu_n:\mathcal{Q}_n\nabla\vu_n \dx=-\varepsilon\int_{\T^3}\mathcal{Q}_n\nabla\vu_n:\mathcal{Q}_n\nabla\vu_n \dx\notag\\
	&=-\varepsilon\int_{\T^3}|\mathcal{Q}_n(\nabla\vu_n)|^2 \dx
\end{align}
Use\eqref{L1}-\eqref{L3} to get \eqref{L0}. For second part \eqref{L}, just use the fact that $\|\mathcal{T}_n(\vu_0)\|_{L^2(\T^3)}^2\le\,\|\vu_0\|_{L^2(\T^3)}^2$. Our claim is done.
\end{proof}

\noindent
\textbf{A priori estimates:} In what follows we derive a priori bounds from energy inequality.
After taking $p-$th power and expectation of both side \eqref{L}, making use of Gronwall's and BDG inequality, we immediately get following uniform bounds in $n$, for all $p\,\ge\,1$,
\begin{align}\label{uniform estimate}
	\mathbb{E}\big[\sup_{t\in[0,T]}\|\vu_n(t)\|_{L^2(\T^3)}^p\big]\le\,\|\vu_0\|_{L^2(\T^3)}^p.
\end{align}
\subsection{Consistency formulation}
In this section, our aim is to prove the consistency of the momentum equation. We demonstrate a consistency formulation and derive suitable bounds on the error terms. The consistency formulation of semi-discrete scheme for the incompressible  Euler equations reads, for all $t\in[0,T]$, $\p$-a.s.
\begin{align}\label{consistency formulation}
	\langle \vu_n(t), \bm\varphi \rangle=\langle\mathcal{T}_n(\vu_0),\bm\varphi\rangle +\int_0^t \langle \vu_n\otimes\vu_n,\nabla\bm\varphi\rangle ds+\int_0^t\langle \mathcal{\sigma}(\vu_n),\bm\varphi\rangle\D W(s)+ \mathcal{R}_1(n,\bm\varphi)+\mathcal{N}(n,\bm\varphi) 
\end{align}
where $\mathcal{R}_1(n,\mathbf{\bm\varphi})$, $\mathcal{N}(n,\bm\varphi)$ satiesfies $\p$-a.s
$$\mathcal{R}_1(n,\bm\varphi):=-\int_{\T^3}\nabla(\mathbb{I}-\mathcal{P}_n)\bm\varphi:(\vu_n\otimes\vu_n)\dx$$
$$\mathcal{N}(n,\bm\varphi):=\varepsilon\int_{\T^3}(\mathbb{I}-\mathcal{P}_m)\Delta\bm\varphi\cdot\vu_n \dx,$$
$$|\mathcal{R}_1(n,\bm\varphi)|\le\,C\,\|\vu_n\|_{L^2(\T^3)}^2\|(\mathbb{I}-\mathcal{P}_n)\bm\varphi\|_{H^{3/2}(\T^3)},$$
$$	|\mathcal{N}(n,\bm\varphi)|\le\,\varepsilon\,\|\vu_n\|_{L^2(\T^3)}\|(\mathbb{I}-\mathcal{P}_m)\bm\varphi\|_{H^2(\T^3)}$$
To finish this, we procced with each term step by step and estimate the consistency errors. Let $\psi\in C^\infty(\mathbb{T}^3)$ be divergence free test function. then

\noindent
\textbf{Convective term:}
\begin{align*}
	\int_{\T^3}\bm\varphi\cdot\mathcal{P}_n(\vu_n\cdot\nabla\vu_n)\dx=\int_{\T^3}\dv(\vu_n\otimes\vu_n)\cdot\bm\varphi\dx + \mathcal{R}_1(n,\bm\varphi)
	\end{align*}
where $\mathcal{R}_1$ estimated as follows
\begin{align*}
	\mathcal{R}_1(n,\bm\varphi)&=\int_{\T^3}\bm\varphi\cdot\mathcal{P}_n(\vu_n\cdot\nabla\vu_n)\dx-\int_{\T^3}\dv(\vu_n\otimes\vu_n)\cdot\bm\varphi\dx\notag\\
	&=\int_{\T^3}\bm\varphi\cdot\dv\big((\mathbb{I}-\mathcal{P}_n)(\vu_n\otimes\vu_n)\big)\dx\notag\\
	&=-\int_{\T^3}\nabla\bm\varphi:(\mathbb{I}-\mathcal{P}_n)(\vu_n\otimes\vu_n)\dx\notag\\
	&=-\int_{\T^3}\nabla(\mathbb{I}-\mathcal{P}_n)\bm\varphi:(\vu_n\otimes\vu_n)\dx	
	\end{align*}
It implies that
\begin{align*}
	|\mathcal{R}_1(n,\bm\varphi)|&\le\,\|\vu_n\|_{L^\infty(\T^3)}\|\vu_n\|_{L^2(\T^3)}\|\nabla(\mathbb{I}-\mathcal{P}_n)\bm\varphi\|_{L^2(\T^3)}\notag\\
	&\le\,C\,n^{1/2}\|\vu_n\|_{L^2(\mathbb{T}^3)}^2\|\nabla(\mathbb{I}-\mathcal{P}_n)\bm\varphi\|_{L^2(\T^3)}\notag\\
	&\le\,C\,\|\vu_n\|_{L^2(\T^3)}^2\|(\mathbb{I}-\mathcal{P}_n)\bm\varphi\|_{H^{3/2}(\T^3)}
\end{align*}
where $\|(\mathbb{I}-\mathcal{P}_n)\bm\varphi\|_{H^{3/2}(\T^3)}\,\to\,0$ as $n\to\,\infty$.

\noindent
\textbf{Diffusion term:}
\begin{align*}
	\mathcal{N}(n,\bm\varphi):=\varepsilon\int_{\T^3}\dv(\mathcal{Q}_n\nabla\vu_n)\bm\varphi\dx=\varepsilon\int_{\T^3}\dv(\mathbb{I}-\mathcal{P}_m)\nabla\bm\varphi\cdot\vu_n\dx=\varepsilon\int_{\T^3}(\mathbb{I}-\mathcal{P}_m)\Delta\bm\varphi\cdot\vu_n \dx
\end{align*}
It implies that
\begin{align*}
	|\mathcal{N}(n,\bm\varphi)|\le\,\varepsilon\,\|\vu_n\|_{L^2(\T^3)}\|(\mathbb{I}-\mathcal{P}_m)\bm\varphi\|_{H^2(\T^3)}
\end{align*}

\noindent
\textbf{Stochastic term:}
\begin{align*}
	\int_0^t\int_{\T^3}\bm\varphi\mathcal{P}_n\mathcal{\sigma}(\vu_n)\dx\,\D W (s)=\int_0^t\int_{\T^3}\bm\varphi\,\mathcal{\sigma}(\vu_n)\,\dx\,\D W (s)
\end{align*}

\section{Convergence of spectral method (semi-discrete scheme)}\label{Section 5}
In this section, we develop the tools required for the proof of convergence of the scheme. The scheme have smooth solutions and consequent passage to the limit gives the existence of a {\em dissipative measure-valued martingale} solution to the original equation. Nevertheless, the limit argument is quite technical and has to be done in several steps. It is based on the compactness method: the uniform energy estimates yield tightness of sequence of approximate solutions and thus, on another probability space, this sequence converges almost surely due to the Jakubowski-Skorokhod representation theorem. Let us now prepare the setup for our compactness method. To establish the tightness of the laws generated by the approximations, let us define the path space $\mathcal{K}$ to be the product of the following spaces:
\begin{align*}
	\mathcal{K}_\vu&= C_w([0,T];L_{\text{div}}^2(\mt)),&\mathcal{K}_W&=C([0,T];\mathfrak{U}_0),\\
	\mathcal{K}_{\mathcal{C}} &= \big(L^{\infty}(0,T; \mathcal{M}_b(\T^3)), w^* \big),&\mathcal{K}_{\mathcal{E}} &= \big(L^{\infty}(0,T; \mathcal{M}_b(\T^3)), w^* \big),\\
	\mathcal{K}_{\mathcal{D}}&= \big(L^{\infty}(0,T; \mathcal{M}_b(\T^3)), w^* \big) &\mathcal{K}_{\mathcal{V}} &= \big(L^{\infty}((0,T)\times \T^3; \mathcal{P}(\R^3)), w^* \big),\\
	\mathcal{K}_{\mathcal{G}}&=\big(L^{\infty}(0,T; \mathcal{M}_b(\T^3)), w^* \big), & \mathcal{K}_\mathcal{N}&=C([0,T]; \R),\\	
\end{align*}
Let us denote by $\lambda_{\vu_n}$, and $\lambda_{W_n}$ respectively, the law of $\vu_n$ and $W_n$ on the corresponding path space. Moreover, let $\lambda_{\mathcal{N}_n}$ denote the law of martingales $\mathcal{N}_n:=\sum_{k\geq1}\int_0^t\int_{\T^3}{ \vu_{n} \cdot {\mathcal{P}_n\mathcal{\sigma}_k}( \textbf{u}_n) } \,\D W$ on the corresponding path spaces. Furthermore, let $\lambda_{\mathcal{C}_n}$, $\lambda_{\mathcal{D}_n}$, $\lambda_{\mathcal{E}_n}$, and $\lambda_{{\mathcal{V}}_n}$ denote the law of 
\begin{align*}
	& \mathcal{C}_n:= {\textbf{u}_n\otimes \textbf{u}_n}, \quad \mathcal{D}_{n}:= \frac{1}{2}|\mathcal{\sigma}({\bf u}_{n})|^2,\quad \mathcal{E}_n :={\frac{1}{2}|{\bf u}_n|^2}, \quad \quad {\mathcal{V}}_n := \delta_{ \textbf{u}_n}, \quad \mathcal{G}_n:=\frac{1}{2}|\mathcal{Q}_H{\mathcal\sigma(u_n)}|^2,
\end{align*}
respectively, on the corresponding path spaces. Finally, let $\lambda^n$ denotes joint law of all the variables on $\mathcal{K}$. To proceed further, it is necessary to establish tightness of $\{\lambda^n;\,n\in\mathbb{N}\}$. To this end, we observe that tightness of $\lambda_{W}$ is immediate. So we show tightness of other variables. We can easily prove the following uniform estimate which helps to conclude that laws given by approximate solutions is tight.
\begin{Lemma}[\textbf{Compactness in time}]
	Let $\vu_n$ be the solution of the semi-discrete scheme \eqref{approximate equations}. Then there exists $0\,\textless\,\alpha\,\textless\,\frac{1}{2}$, $C\textgreater\,0$ such that, for $\gamma\,\textgreater\,\frac{5}{2}$, $r\textgreater\,2$
	\begin{align}\label{compactness in time}
		\mathbb{E}\big[\|\vu_n\|_{C^{\alpha}([0,T],W^{-\gamma,2}(\mathbb{T}^3))}\big]\,\le\,C,
	\end{align}
and
\begin{align}\label{compactness in time 1}
	\mathbb{E}\Bigg[\Big\|\int_0^{\cdot}\int_{\T^3}\vu_n\mathcal{P}_n(\mathcal{\sigma}(\vu_n))\dx\D W\Big\|_{C^{\alpha}([0,T];R)}^r\Bigg]\,\le\,C.
\end{align}
\begin{proof}
	For a proof, we refer to \cite{K2}. 
\end{proof}
\end{Lemma}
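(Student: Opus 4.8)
The plan is to split the increment $\vu_n(t)-\vu_n(s)$ read off from the scheme \eqref{approximate equations},
\[
\vu_n(t)-\vu_n(s)=-\int_s^t\mathcal{P}_n(\vu_n\cdot\nabla\vu_n)\,\D\tau+\varepsilon\int_s^t\dv(\mathcal{Q}_n\nabla\vu_n)\,\D\tau+\int_s^t\mathcal{P}_n\mathcal{\sigma}(\vu_n)\,\D W,
\]
into a drift part and a martingale part, estimate each in $W^{-\gamma,2}(\T^3)$ with $\gamma>5/2$, and then invoke Kolmogorov's continuity criterion. For the convective term I would move one spatial derivative onto the test function, so that $\|\mathcal{P}_n(\vu_n\cdot\nabla\vu_n)\|_{W^{-\gamma,2}}\le\|\vu_n\otimes\vu_n\|_{W^{1-\gamma,2}}\le C\|\vu_n\otimes\vu_n\|_{L^1}=C\|\vu_n\|_{L^2(\T^3)}^2$, the embedding $L^1(\T^3)\hookrightarrow W^{1-\gamma,2}(\T^3)$ --- dual to $W^{\gamma-1,2}(\T^3)\hookrightarrow C(\T^3)$ --- being precisely what forces $\gamma>5/2$. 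For the numerical viscosity term, two integrations by parts together with the $L^2$-boundedness of $\mathcal{Q}_n$ give $\|\varepsilon\dv(\mathcal{Q}_n\nabla\vu_n)\|_{W^{-\gamma,2}}\le C\varepsilon\|\vu_n\|_{L^2(\T^3)}$, for which $\gamma\ge 2$ suffices. Combined with the a priori bound \eqref{uniform estimate}, this shows the drift part is Lipschitz in time with values in $W^{-\gamma,2}$, with all moments of its H\"older seminorm bounded uniformly in $n$.

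Next I would treat the stochastic integral directly (there is no semigroup involved). Since $L^2(\T^3)\hookrightarrow W^{-\gamma,2}(\T^3)$ and $\mathcal{P}_n$ is a contraction, the growth hypothesis \eqref{FG1} gives $\|\mathcal{P}_n\mathcal{\sigma}(\vu_n)\|_{L_2(\mathfrak{U};W^{-\gamma,2})}^2\le C\sum_{k\ge1}\|\mathcal{\sigma}_k(\vu_n)\|_{L^2}^2\le C\big(1+\|\vu_n\|_{L^2(\T^3)}^2\big)$, so the Burkholder--Davis--Gundy inequality yields, for any $r>2$,
\[
\mathbb{E}\Big[\Big\|\int_s^t\mathcal{P}_n\mathcal{\sigma}(\vu_n)\,\D W\Big\|_{W^{-\gamma,2}}^r\Big]\le C|t-s|^{r/2}\,\mathbb{E}\Big[\sup_{\tau\in[0,T]}\big(1+\|\vu_n(\tau)\|_{L^2(\T^3)}^2\big)^{r/2}\Big]\le C|t-s|^{r/2},
\]
again by \eqref{uniform estimate}. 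Adding the two contributions, $\mathbb{E}\big[\|\vu_n(t)-\vu_n(s)\|_{W^{-\gamma,2}}^r\big]\le C|t-s|^{r/2}$, and since $\tfrac r2=1+r\big(\tfrac12-\tfrac1r\big)$ with $\tfrac12-\tfrac1r>0$, Kolmogorov's continuity criterion delivers $\mathbb{E}\big[\|\vu_n\|_{C^\alpha([0,T];W^{-\gamma,2})}^r\big]\le C$ for every $0<\alpha<\tfrac12-\tfrac1r<\tfrac12$; estimate \eqref{compactness in time} then follows by Jensen's inequality.

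For \eqref{compactness in time 1} I would run the identical recipe on the scalar martingale $\mathcal{N}_n(t)=\int_0^t\int_{\T^3}\vu_n\,\mathcal{P}_n\mathcal{\sigma}(\vu_n)\,\dx\,\D W$: by Cauchy--Schwarz and \eqref{FG1} its quadratic variation satisfies $\tfrac{\D}{\D\tau}\langle\mathcal{N}_n\rangle_\tau=\sum_{k\ge1}\big|\int_{\T^3}\vu_n\,\mathcal{P}_n\mathcal{\sigma}_k(\vu_n)\,\dx\big|^2\le\|\vu_n\|_{L^2}^2\sum_{k\ge1}\|\mathcal{\sigma}_k(\vu_n)\|_{L^2}^2\le C\big(1+\|\vu_n\|_{L^2(\T^3)}^4\big)$, whence BDG and \eqref{uniform estimate} give $\mathbb{E}|\mathcal{N}_n(t)-\mathcal{N}_n(s)|^r\le C|t-s|^{r/2}$ for $r>2$, and Kolmogorov's criterion closes the argument.

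The routine ingredients are BDG and the Sobolev embeddings; the one point that requires genuine care --- and the main obstacle --- is the choice of the negative exponent. Because the a priori estimates supply \emph{no} spatial compactness for $\vu_n$ beyond the uniform $L^2$-bound, the convective flux $\vu_n\otimes\vu_n$ is merely bounded in $L^1(\T^3)$, so one is forced to descend to a sufficiently negative Sobolev space ($\gamma>5/2$) in order to control its distributional divergence. This is exactly why $C_w([0,T];L^2_{\dv}(\T^3))$, rather than a strong path space, is the natural target for $\vu_n$, and why the time-regularity of the scheme must be registered in $W^{-\gamma,2}$.
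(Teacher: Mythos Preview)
Your argument is correct and is precisely the standard proof one expects here: decompose the increment from \eqref{approximate equations} into drift and martingale parts, push the convective flux into $W^{-\gamma,2}$ via the duality embedding $L^1(\T^3)\hookrightarrow W^{1-\gamma,2}(\T^3)$ (which pins down $\gamma>5/2$), control the viscosity term by two derivatives in the negative scale, apply BDG to the stochastic integral, and close with Kolmogorov's criterion. The paper itself does not supply an independent proof of this lemma---it simply refers to \cite{K2}---so there is no alternative route to compare against; what you have written is exactly the argument that reference contains, and your identification of the threshold $\gamma>5/2$ as the key structural constraint (stemming from the mere $L^1$-integrability of $\vu_n\otimes\vu_n$) is the right diagnosis.
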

\subsection{Stochastic compactness}
	\textbf{Tightness of law:}  To proceed, it is necessary to establish tightness of $\{\lambda^n; n\in\mathbb{N}\}$. Infact, we have all in hand to conclude our compactness argument by showing tightness of a certain collection of laws. 
\begin{Lemma}
	$\{\lambda^n,n\in\mathbb{N}\}$ is tight on $\mathcal{K}$.
\end{Lemma}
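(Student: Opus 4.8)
The plan is to reduce tightness of the joint law $\lambda^n$ on the finite product $\mathcal{K}$ to tightness of each marginal: given $\eta>0$, if every one of the finitely many factors carries a relatively compact set of $\lambda^n$-mass at least $1-\eta/M$ uniformly in $n$ (with $M$ the number of factors), then the product of those sets is relatively compact in $\mathcal{K}$ and carries $\lambda^n$-mass at least $1-\eta$. So I would dispatch the components one at a time; in each case the recipe is the same --- exhibit a relatively compact subset of the factor, bound the governing norm uniformly in $n$ in expectation, and conclude by Chebyshev's inequality.

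The Wiener component is immediate: $\lambda_{W_n}$ does not depend on $n$, so it is a single Radon measure on the Polish space $C([0,T];\mathfrak{U}_0)$, hence tight. For the velocity marginal on $\mathcal{K}_\vu=C_w([0,T];L^2_{\dv}(\T^3))$, I would combine the uniform energy bound \eqref{uniform estimate} with the fractional time-regularity bound \eqref{compactness in time} and invoke the Arzel\`a--Ascoli criterion for the weak topology: a family bounded in $L^\infty(0,T;L^2_{\dv})$ whose members are equicontinuous as maps into $W^{-\gamma,2}(\T^3)$ is relatively compact in $C_w([0,T];L^2_{\dv})$ --- the uniform bound supplies weak compactness of balls (weakly metrizable since $L^2$ is separable) and the $C^\alpha([0,T];W^{-\gamma,2})$ bound supplies the weak equicontinuity. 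Chebyshev applied to \eqref{uniform estimate} and \eqref{compactness in time} then yields the uniform tail estimate. The martingale marginal $\mathcal{N}_n$ on $C([0,T];\R)$ is handled identically, using \eqref{compactness in time 1} and the compact embedding $C^\alpha([0,T])\hookrightarrow\hookrightarrow C([0,T])$.

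For the remaining, softer components --- $\mathcal{C}_n=\vu_n\otimes\vu_n$, $\mathcal{E}_n=\tfrac12|\vu_n|^2$, $\mathcal{D}_n=\tfrac12|\sigma(\vu_n)|^2$ and $\mathcal{G}_n=\tfrac12|\mathcal{Q}_H\sigma(\vu_n)|^2$ living in the weak-$*$ spaces $L^\infty(0,T;\mathcal{M}_b(\T^3))$, together with the Young measure $\mathcal{V}_n=\delta_{\vu_n}$ --- only a uniform-in-$n$ bound (in expectation) on the relevant norm is needed, since bounded balls of $L^\infty(0,T;\mathcal{M}_b(\T^3))$ with the weak-$*$ topology are compact and metrizable (the predual $L^1(0,T;C(\T^3))$ being separable), and families of Young measures with uniformly bounded second moments are weak-$*$ relatively compact by the fundamental theorem on Young measures. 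These bounds come straight from \eqref{uniform estimate}, the growth assumption \eqref{FG1}, and the boundedness of $\mathcal{Q}_H$ on $L^2$; for $\mathcal{V}_n$ I would use that $\int_0^T\int_{\T^3}\langle\mathcal{V}_n;|\xi|^2\rangle\,\dx\,\dt\le T\sup_{t}\|\vu_n(t)\|_{L^2(\T^3)}^2$, whose expectation is uniformly bounded by \eqref{uniform estimate}. Chebyshev again closes each case.

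The only genuinely delicate step --- everything else being routine --- is the velocity marginal: one must work in the non-metrizable space $C_w([0,T];L^2_{\dv})$, use the correct compactness characterization there, and check that the weak-$*$ $L^\infty$ spaces and the Young-measure space all fit the quasi-Polish setting required by the modified Jakubowski--Skorokhod theorem (Theorem~\ref{newth}), with bounded subsets metrizable so that Prokhorov's theorem and Chebyshev's inequality can be applied componentwise.
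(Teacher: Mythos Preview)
Your proposal is correct and follows exactly the route the paper indicates: the paper's own proof is a one-line sketch (``Compact imbeddings give tightness of laws'') with a reference to \cite{K2}, and what you have written is precisely the standard unpacking of that sketch --- marginal-by-marginal tightness via uniform bounds, compact embeddings, Banach--Alaoglu for the weak-$*$ factors, and Chebyshev. Nothing is missing or different in substance.
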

\begin{proof}
	Compact imbeddings give tightness of laws. For a proof, we refer to \cite{K2}.
	\end{proof}

\noindent
Since, the path space $\mathcal{K}$ is not a Polish space, our compactness argument is based on the modified version of ~Jakubowski-Skorokhod representation theorem instead of the classical Skorokhod representation theorem. To be more precise, passsing to a weakly convergent subsequence $\lambda^n$ and denoting by $\lambda$ the limit law, we infer the following result.
\begin{Proposition}\label{prop:skorokhod1}
	There exists a subsequence $\lambda^n$ (not relabelled), a probability space $(\widetilde\Omega,\widetilde{\mathcal{F}},\widetilde\prst)$ with $\mathcal{K}$-valued Borel measurable random variables $(\widetilde{\textbf{u}}_n, \widetilde W_n\, \widetilde {\mathcal{C}}_n, \widetilde {\mathcal{D}}_n, \widetilde {\mathcal{E}}_n, \widetilde {\mathcal{N}}_n, \widetilde{\mathcal{G}}_n, \widetilde {\mathcal{V}}_{n})$, $n\in\mathbb{N}$, and\\  $(\widetilde{\textbf{u}},\widetilde W, \widetilde{\mathcal{C}}, \widetilde{\mathcal{D}}, \widetilde{\mathcal{E}}, \widetilde {\mathcal{N}},\widetilde{\mathcal{G}}, \widetilde{\mathcal{V}})$ such that 
	\begin{enumerate}
		\item [(1)]the law of $(\widetilde{\textbf{u}}_n, \widetilde W_n\, \widetilde {\mathcal{C}}_n, \widetilde {\mathcal{D}}_n, \widetilde {\mathcal{E}}_n, \widetilde {\mathcal{N}}_n, \widetilde{\mathcal{G}}_n, \widetilde {\mathcal{V}}_{n})$ is given by $\lambda^n$, $n\in\mathbb{N}$,
		\item [(2)]the law of $(\widetilde{\textbf{u}},\widetilde W, \widetilde{\mathcal{C}}, \widetilde{\mathcal{D}}, \widetilde{\mathcal{E}}, \widetilde {\mathcal{N}},\widetilde{\mathcal{G}}, \widetilde{\mathcal{V}})$, denoted by $\mu$, is a Radon measure,
		\item [(3)]$(\widetilde{\textbf{u}}_n, \widetilde W_n\, \widetilde {\mathcal{C}}_n, \widetilde {\mathcal{D}}_n, \widetilde {\mathcal{E}}_n, \widetilde {\mathcal{N}}_n, \widetilde{\mathcal{G}}_n, \widetilde {\mathcal{V}}_{n})$ converges $\,\widetilde{\prst}$-almost surely to \\$(\widetilde{\textbf{u}},\widetilde W, \widetilde{\mathcal{C}}, \widetilde{\mathcal{D}}, \widetilde{\mathcal{E}}, \widetilde {\mathcal{N}},\widetilde{\mathcal{G}}, \widetilde{\mathcal{V}})$ in the topology of $\mathcal{K}$, i.e.,
		\begin{align*}
			&\widetilde{\textbf{u}}_n\rightarrow \bar{\textbf u} \,\, \text{in}\, \,C_w([0,T]; L_{\dv}^2(\T^3)),\quad&\widetilde W_n \rightarrow \widetilde W \,\, \text{in}\, \,C([0,T]; \mathcal{U}_0)),\\
			&\widetilde {\mathcal{C}}_n \rightarrow \widetilde {\mathcal{C}} \,\, \text{weak-$*$ in}\, \, L_{w*}^{\infty}(0,T; \mathcal{M}_b(\T^3)), \qquad & \widetilde {\mathcal{D}}_n \rightarrow \widetilde {\mathcal{D}} \,\, \text{weak-$*$ in}\, \, L_{w*}^{\infty}(0,T; \mathcal{M}_b(\T^3)), \\
			&\widetilde{\mathcal{N}}_n \rightarrow \widetilde N \,\, \text{in}\, \, C([0,T]; \R),\qquad
			& \widetilde {\mathcal{E}}_n \rightarrow \widetilde {\mathcal{E}} \,\, \text{weak-$*$ in}\, \, L_{w*}^{\infty}(0,T; \mathcal{M}_b(\T^3)), \\
			& \widetilde {\mathcal{V}}_n \rightarrow \widetilde {\mathcal{V}} \,\, \text{weak-$*$ in}\, \, L_{w*}^{\infty}((0,T)\times \T^3; \mathcal{P}(\R^3)),\qquad&\widetilde{\mathcal{G}}_n\rightarrow\widetilde{\mathcal{G}}\,\,\text{weak-$*$ in}\,\,L_{w*}^{\infty}(0,T; \mathcal{M}_b(\T^3)),
		\end{align*}
		\item [(4)] For any $n\in\mathbb{N}$, $\widetilde W_{n}=\widetilde W$.
		
	\end{enumerate}
\end{Proposition}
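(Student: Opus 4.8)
The plan is to deduce the statement from the modified Jakubowski--Skorokhod representation theorem, Theorem~\ref{newth}, rather than from the classical Skorokhod theorem, because several of the factor spaces making up $\mathcal{K}$ (those carrying weak-$*$ topologies) are not Polish. First I would write $\mathcal{K}=S_1\times S_2$ with $S_1=\mathcal{K}_W=C([0,T];\mathfrak{U}_0)$ and $S_2=\mathcal{K}_\vu\times\mathcal{K}_{\mathcal{C}}\times\mathcal{K}_{\mathcal{D}}\times\mathcal{K}_{\mathcal{E}}\times\mathcal{K}_{\mathcal{N}}\times\mathcal{K}_{\mathcal{G}}\times\mathcal{K}_{\mathcal{V}}$. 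The space $S_1$ is a separable (indeed Polish) metric space, and putting the Wiener component into $S_1$ is precisely what will deliver the pathwise identity $\widetilde W_n=\widetilde W$ in item (4), through conclusion (3) of Theorem~\ref{newth}.

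Next I would check that $S_2$ is quasi-Polish, i.e.\ that it admits a countable family of continuous maps $h_j:S_2\to[-1,1]$ separating points. For $\mathcal{K}_{\mathcal{N}}=C([0,T];\R)$ (Polish) and $\mathcal{K}_\vu=C_w([0,T];L^2_{\dv}(\mt))$ this is standard: in the latter case one pairs the trajectory evaluated at rational times against a countable dense subset of $L^2_{\dv}(\mt)$ and composes with $\arctan$. For each factor of the form $(L^\infty(0,T;\mathcal{M}_b(\mt)),w^*)$ one uses that the predual $L^1(0,T;C(\mt))$ is separable: with $\{\phi_j\}$ countable dense in its unit ball, the maps $\nu\mapsto\tfrac{2}{\pi}\arctan\langle\nu,\phi_j\rangle$ are weak-$*$ continuous, $[-1,1]$-valued, and separate points. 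The Young-measure factor $\mathcal{K}_{\mathcal{V}}=(L^\infty((0,T)\times\mt;\mathcal{P}(\R^3)),w^*)$ is handled the same way using a countable dense set in the unit ball of $L^1((0,T)\times\mt;C_0(\R^3))$, cf.\ \cite{Balder,FLM1}. A finite product of quasi-Polish spaces is quasi-Polish, so $S_2$ qualifies and $\mathcal{B}(S_1)\otimes\mathcal{S}_2$ is the natural $\sigma$-algebra on $\mathcal{K}$.

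With this structure in place, I would invoke tightness of $\{\lambda^n\}$ on $\mathcal{K}$ (established in the preceding lemma, following \cite{K2}), which yields relative weak compactness of the laws in this framework, and extract a subsequence with $\lambda^n\rightharpoonup\lambda$. The only remaining hypothesis of Theorem~\ref{newth} is that $\mathcal{L}aw(\pi_1(U_n))$ be independent of $n$; this is automatic here, since the scheme \eqref{approximate equations} uses one fixed Wiener process, so $W_n=W$ and $\mathcal{L}aw(\pi_1(U_n))=\mathcal{L}aw(W)$ for all $n$. Applying Theorem~\ref{newth} produces $(\widetilde\Omega,\widetilde{\mathcal{F}},\widetilde{\mathbb{P}})$, the tuples $(\widetilde\vu_n,\widetilde W_n,\widetilde{\mathcal{C}}_n,\widetilde{\mathcal{D}}_n,\widetilde{\mathcal{E}}_n,\widetilde{\mathcal{N}}_n,\widetilde{\mathcal{G}}_n,\widetilde{\mathcal{V}}_n)$ and $(\widetilde\vu,\widetilde W,\widetilde{\mathcal{C}},\widetilde{\mathcal{D}},\widetilde{\mathcal{E}},\widetilde{\mathcal{N}},\widetilde{\mathcal{G}},\widetilde{\mathcal{V}})$ with: equality of laws, which is (1); $\widetilde{\mathbb{P}}$-a.s.\ convergence in the topology of $\mathcal{K}$, which unpacks componentwise into the list in (3); and $\pi_1(\widetilde U_n)=\pi_1(\widetilde U)$, i.e.\ $\widetilde W_n=\widetilde W$ for all $n$, which is (4). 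For (2), equality of laws transfers the bounds \eqref{uniform estimate} and the energy estimate \eqref{L} to $\widetilde{\mathbb{P}}$-a.s.\ bounds on the new variables, so $\mu$ is concentrated on a countable union of bounded balls, each of which is metrizable (and compact or Polish) in the relevant topology; by Ulam's theorem $\mu$ is then inner regular with respect to compact sets, hence Radon.

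\textbf{Main obstacle.} The genuinely delicate point is the interface with the abstract theorem: verifying that all of the weak-$*$ path spaces really do fit the quasi-Polish framework, and that the classical notion of tightness used to obtain $\lambda^n\rightharpoonup\lambda$ is compatible with it. Since $L^\infty_{w^*}(0,T;\mathcal{M}_b(\mt))$ is not metrizable globally, every compactness and measurability statement must be localized to the bounded balls cut out by the energy estimates; once this localization is carried out, the remainder is a bookkeeping application of Theorem~\ref{newth}.
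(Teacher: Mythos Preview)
Your proposal is correct and follows essentially the same approach as the paper: the paper's proof simply states that items (1)--(3) follow directly from the Jakubowski--Skorokhod representation theorem and refers to Theorem~\ref{newth} (and \cite{Jakubowski}) for item (4). Your write-up spells out in more detail the decomposition $\mathcal{K}=S_1\times S_2$ with $S_1=\mathcal{K}_W$, the quasi-Polish verification of the weak-$*$ factors, and the Radon property of $\mu$, but the underlying argument is the same.
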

\begin{proof}
Proof of the items $(1)$, $(2)$, and $(3)$ directly follow from Jakubowski-Skorokhod representation theorem. For the proof of the item $(4)$, we refer to Theorem \ref{newth}, and \cite{Jakubowski}.
\end{proof}

\noindent
\textbf{Martingale solution:} In the following result, we will show that $\widetilde{\vu}_n$ is also  a solution of the approximate scheme \eqref{approximate equations} in another probability space.
\begin{Proposition}\label{prop:martsol}
	For every $n\in\mathbb{N}$, $\big((\widetilde{\Omega},\widetilde{\mathcal{F}},(\widetilde{\mathcal{F}}_t)_{t\ge\,0},\widetilde{\prst}),\widetilde{\vu}_n,\widetilde{W}\big)$ is a finite energy martingale solution to \eqref{approximate equations} with the initial data $\mathcal{T}_n(\vu_0)$. 
\end{Proposition}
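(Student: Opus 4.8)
The plan is to run the classical martingale-identification argument of Bensoussan and Flandoli--Gatarek (in the form already used in \cite{K2}), exploiting that for fixed $n$ the unknown $\widetilde{\vu}_n$ takes values in the finite-dimensional space spanned by the Fourier modes $|k|\le n$, on which the weak and the strong $L^2$-topologies coincide. First I would fix the filtration on the new space: set $\widetilde{\mathcal{F}}_t$ to be the $\widetilde{\prst}$-augmentation of $\sigma\big(\widetilde{\vu}_n(s),\ \widetilde W(s)\,:\,s\le t\big)$. Since the joint law of $(\widetilde{\vu}_n,\widetilde W)$ equals that of $(\vu_n,W)$ by Proposition~\ref{prop:skorokhod1}(1), and since independence of Wiener increments from the past is a statement about laws, a routine argument shows $\widetilde W$ is an $(\widetilde{\mathcal{F}}_t)$-cylindrical Wiener process; by Proposition~\ref{prop:skorokhod1}(4) it is in fact the original process transported to the relabelled space. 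The process $\widetilde{\vu}_n$ is $(\widetilde{\mathcal{F}}_t)$-adapted by construction, and the moment bound \eqref{uniform estimate} transfers verbatim, giving the finite-energy property.

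Next I would introduce, on the path space $\mathcal{K}_{\vu}\times\mathcal{K}_W$, the defect functional
\[
M_n^{\bm\varphi}(t) := \langle \vu_n(t),\bm\varphi\rangle - \langle \mathcal{T}_n(\vu_0),\bm\varphi\rangle + \int_0^t \langle \mathcal{P}_n(\vu_n\cdot\nabla\vu_n),\bm\varphi\rangle\,\mathrm{d}r - \varepsilon\int_0^t \langle \dv(\mathcal{Q}_n\nabla\vu_n),\bm\varphi\rangle\,\mathrm{d}r,
\]
for $\bm\varphi\in C^\infty_{\text{div}}(\T^3)$, so that \eqref{approximate equations} exactly asserts $M_n^{\bm\varphi}(t)=\int_0^t\langle\mathcal{P}_n\sigma(\vu_n),\bm\varphi\rangle\,\D W$. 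On the original space $M_n^{\bm\varphi}$ is then a continuous square-integrable $(\mathcal{F}_t)$-martingale, and so are the processes $(M_n^{\bm\varphi})^2 - \int_0^{\cdot}\sum_{k\ge 1}|\langle\mathcal{P}_n\sigma_k(\vu_n),\bm\varphi\rangle|^2\,\mathrm{d}r$ and $M_n^{\bm\varphi}\,W_k - \int_0^{\cdot}\langle\mathcal{P}_n\sigma_k(\vu_n),\bm\varphi\rangle\,\mathrm{d}r$ encoding the quadratic and cross variations. These functionals are Borel measurable on the path space and, because the truncation $\mathcal{P}_n$ confines the nonlinearity to a fixed finite-dimensional subspace, weak convergence there upgrades to strong convergence and the functionals are in fact continuous; together with \eqref{uniform estimate} this supplies the uniform integrability needed below.

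I would then transfer the three martingale identities to the tilde variables: for bounded continuous $g$ depending on the trajectory up to time $s$, equality of laws (Proposition~\ref{prop:skorokhod1}(1)) gives $\widetilde{\mathbb{E}}\big[(\widetilde M_n^{\bm\varphi}(t)-\widetilde M_n^{\bm\varphi}(s))\,g\big] = \mathbb{E}\big[(M_n^{\bm\varphi}(t)-M_n^{\bm\varphi}(s))\,g\big] = 0$, and likewise for the variation functionals, with the usual truncation removing boundedness of the integrands by invoking \eqref{uniform estimate}. Hence $\widetilde M_n^{\bm\varphi}$ is a continuous square-integrable $(\widetilde{\mathcal{F}}_t)$-martingale with $\langle\!\langle \widetilde M_n^{\bm\varphi}\rangle\!\rangle_t = \int_0^t\sum_{k\ge1}|\langle\mathcal{P}_n\sigma_k(\widetilde\vu_n),\bm\varphi\rangle|^2\,\mathrm{d}r$ and $\langle\!\langle \widetilde M_n^{\bm\varphi},\widetilde W_k\rangle\!\rangle_t = \int_0^t\langle\mathcal{P}_n\sigma_k(\widetilde\vu_n),\bm\varphi\rangle\,\mathrm{d}r$, and the standard identification lemma yields $\widetilde M_n^{\bm\varphi}(t) = \int_0^t\langle\mathcal{P}_n\sigma(\widetilde\vu_n),\bm\varphi\rangle\,\D\widetilde W$ $\widetilde{\prst}$-a.s. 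Since $\widetilde\vu_n$ lives in finitely many Fourier modes, testing against all $\bm\varphi$ is equivalent to the SDE system \eqref{approximate equations}, and the (deterministic) initial condition $\widetilde\vu_n(0)=\mathcal{T}_n(\vu_0)$ is inherited from equality of laws together with Proposition~\ref{prop:skorokhod1}(3).

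The only genuinely delicate point is this last transfer-and-identification step: checking that the nonlinear and viscous time integrals define measurable (here, continuous) functionals of the $C_w([0,T];L^2_{\text{div}})$-path, and that the Itô integral on the new space is correctly pinned down by its quadratic variation and its covariation with $\widetilde W$. Both are classical, and for fixed $n$ the finite-mode structure makes the continuity issue harmless; everything else is bookkeeping, so I would refer to \cite{K2} for the routine details.
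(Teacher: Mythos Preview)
Your argument is correct and is precisely the classical Bensoussan/Flandoli--G\c{a}tarek martingale-identification scheme that underlies the result the paper invokes. The paper itself gives no details at all: its entire proof consists of a one-line reference to Theorem~2.9.1 of the Breit--Feireisl--Hofmanov\'a monograph \cite{BrFeHobook}, which packages exactly the transfer-of-laws and quadratic/cross-variation identification you have written out (and which benefits, as you observe, from the finite-dimensionality of the $n$-truncated phase space). So your proposal is not a different route but rather an explicit unpacking of the black-box citation; the only cosmetic discrepancy is that the paper defines the filtration $(\widetilde{\mathcal{F}}_t)$ slightly later and includes the auxiliary martingale $\widetilde{\mathcal{N}}_n$ in its generating family, but this does not affect the argument for fixed $n$.
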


\begin{proof}
	Proof of the above proposition directly follows form the Theorem 2.9.1 of the monograph by Breit et. al. \cite{BrFeHobook}.
\end{proof}

\noindent We note that the above proposition implies that the new random variables satisfy the follwoing equations and the energy inequality on the new probability space, 
\begin{itemize}
\item for all  $\bm\varphi\in C_{\text{div}}^\infty(\mathbb{T}^3)$ we have
	\begin{equation}\label{eq:energyt}
		\begin{aligned}
			\langle \mathbf{\widetilde{u}}_{n}(t), \bm\varphi\rangle &= \langle \mathbf{\widetilde{u}}_{n} (0), \bm\varphi\rangle - \int_0^{t}\langle \mathbf{\widetilde{u}}_{n}\otimes\mathbf{\widetilde{u}}_{n}, \nabla_x  \bm\varphi\rangle\,\mathrm{d}s
			+ {\varepsilon}\,\int_0^{t}\langle(\mathbb{I}-\mathcal{P}_m)\nabla_x \mathbf{\widetilde{u}}_{n}\,, \nabla_x  \bm\varphi\rangle\mathrm{d}s 
			\\&
			+\int_0^{t}\langle{\mathcal{\sigma}}(\mathbf{\widetilde{u}}_{n}), \bm\varphi\rangle\,\mathrm{d}W + \widetilde{\mathcal{R}}(n,\bm\varphi) + \widetilde{\mathcal{N}}(n,\bm\varphi)
		\end{aligned}
	\end{equation}
	$\widetilde{\mathbb{P}}$-a.s. for all $t\in[0,T]$, 
	\noindent
	where $\mathcal{\widetilde R}(h,\varphi),$ and $\mathcal{\widetilde N}(h,\bm{\varphi})$ are defined similarly as in \eqref{consistency formulation}, in the new probability space.
	\item the energy inequality\index{energy inequality}, $\widetilde{\p}$-a.s., for all $s\,\textless\,t\in[0,T]$ 
	\begin{align}\label{energy inequalty 1}
			&\frac{1}{2}\|\widetilde{\vu}_n(t)\|_{L^2(\T^3)}^2 + \ \varepsilon\int_s^t\|\mathcal{Q}_n(\nabla\widetilde{\vu}_n(s))\|_{L^2(\mathbb{T}^3)}^2\D s\,\notag\\
			&\le\,\frac{1}{2}\|\widetilde{\vu}_n(s)\|_{L^2(\mathbb{T}^3)}^2 + \int_s^t\int_{\mathbb{T}^3}\widetilde{\vu}_n(s)\mathcal{P}_n(\mathcal{\sigma}(\widetilde{\vu}_n(s)))\D\widetilde{ W}(s) + \int_s^t\|\mathcal{P}_n(\widetilde{\vu}_n(s))\|_{L^2(\mathbb{T}^3)}^2\D s
	\end{align}
\end{itemize}

\noindent
\textbf{Filtration:}  Note that, since $(\vu_n,\mathcal{N}_n)$ are random variables with values in $C([0,T];L^2(\mathbb{T}^3))\times C([0,T];\R)$. By \cite[Lemma A.3]{GA} and \cite[Corollary A.2]{Ondre},  $(\widetilde\vu_n,\widetilde{\mathcal{N}}_n)$ are also random variables with values in $C([0,T],L^2(\mathbb{T}^3))\times C([0,T];\R)$. Let $(\widetilde{\mathcal{F}}_t^n)$ be the $\widetilde{\prst}$-augmented canonical filtration of the process $(\widetilde\vu_n,\widetilde{W}, \widetilde{\mathcal {N}}_n)$, that is 
\begin{equation*}
\begin{split}
	\widetilde{\mathcal{F}}_t^n&=\mathcal{\sigma}\big(\mathcal{\sigma}\big(\mathbf{r}_t\widetilde\vu_n,\,\,\mathbf{r}_t \widetilde{W},  \mathbf{r}_t\widetilde{\mathcal{N}}_n\big)\cup\big\{N\in\widetilde{\mathcal{F}};\;\widetilde{\prst}(N)=0\big\}\big),\quad t\in[0,T],
\end{split}
\end{equation*}
where we denote by $\mathbf{r}_t$ the operator of restriction to the interval $[0,t]$ acting on various path spaces.
Let us remark that by assuming that the initial filtration $({\mathcal{F}}_t)_{t\ge\,0}$ is the one generated by ${W}$, by \cite[Lemma A.6]{GA}, one can consider $(\widetilde{\mathcal{F}}_t^h)=(\widetilde{\mathcal{F}_t})$ is the filtration generated by $\widetilde{W}$.

\noindent
\textbf{Almost surely limit:}
 The lack of strong convergence of  sequence $\widetilde{\vu}_n$ does not allow us to identify the limit of  the terms where the dependence on $\widetilde{\vu}_n$ is nonlinear, namely, the convective term in momentum equation  and  nonlinear terms in energy inequality. Next we want to pass the limit $n\to\infty$ in \eqref{eq:energyt} and \eqref{energy inequalty 1}. To complete this, first we recall that a-priori bounds \eqref{uniform estimate} which are remain hold for the new random variables. Young measure capture the weak limit \cite[section 2.8]{BrFeHobook}. Thus, by implementation of \cite[Theorem 2.8]{BrFeHobook}, we conclude that $\widetilde \p$-a.s., 
\begin{align}\label{weak limit}
	\widetilde {\textbf u}_n \rightharpoonup \langle {\mathcal{\widetilde V}^{\omega}_{t,x}}; \widetilde {\textbf u} \rangle, \,\,\text{weakly in}\,\, L^2((0,T);L_{\text{div}}^2(\T^3)).
\end{align}
 We first introduce the following concentration defect measures to pass limit in the nonlinear terms present in the equations.
\begin{align*}
	\widetilde \lambda_{\mathcal{C}}= \widetilde {\mathcal{C}} -\left\langle \mathcal{\widetilde V}^{\omega}_{(\cdot, \cdot)}; {\widetilde {\bf u}\otimes \widetilde {\bf u}} \right\rangle dxdt,\,\,\widetilde \lambda_{\mathcal{E}} = \widetilde {\mathcal{E}}- \left\langle \mathcal{\widetilde V}^{\omega}_{(\cdot, \cdot)}; \frac{1}{2} {|\widetilde {\bf u}|^2} \right \rangle dx,\,\,\widetilde \lambda_{\mathcal{D}}= \widetilde {\mathcal{D}} -\left\langle \mathcal{\widetilde V}^{\omega}_{(\cdot, \cdot)};\frac{1}{2} |{\mathcal{\sigma}} (\widetilde {\bf u}) |^2 \right\rangle dxdt. 
\end{align*}
\begin{align*}
	\widetilde{\lambda}_{\mathcal{G}}=\widetilde{\mathcal{G}}-\frac{1}{2}\bigg(\mathcal{Q}_H\left\langle \mathcal{\widetilde V}^{\omega}_{(\cdot, \cdot)}; |{\mathcal{\sigma}} (\widetilde {\bf u}) | \right\rangle\bigg)^2
\end{align*}
Make use of these concentration defect measures, we conclude that $ \mathbb{\widetilde P}$-a.s.
\begin{align*}
	&\widetilde {\mathcal{C}}_n \rightharpoonup \left\langle \mathcal{\widetilde V}^{\omega}_{(\cdot, \cdot)}; {\widetilde {\bf u}\otimes \widetilde {\bf u}}\right\rangle dxdt + \widetilde \lambda_{\mathcal{C}}, \,\, \text{ weak-$*$ in}\, \, L_{w*}^{\infty}(0,T; \mathcal{M}_b(\T^3)), \\
	&\widetilde {\mathcal{D}}_n \rightharpoonup \left\langle \mathcal{\widetilde V}^{\omega}_{(\cdot, \cdot)};\frac{1}{2}|{ \mathcal{\sigma}} (\widetilde {\bf u}) |^2\right\rangle dxdt + \widetilde \lambda_{\mathcal{D}}, \,\, \text{weak-$*$ in}\, \, L_{w*}^{\infty}(0,T; \mathcal{M}_b(\T^3)), \\
	&\widetilde {\mathcal{E}}_n \rightharpoonup \left\langle \mathcal{\widetilde V}^{\omega}_{(\cdot, \cdot)}; \frac{1}{2}|\widetilde {\bf u}|^2\right\rangle dxdt + \widetilde \lambda_{\mathcal{E}}, \,\, \text{weak-$*$ in}\, \, L_{w*}^{\infty}(0,T; \mathcal{M}_b(\T^3)),\\
	&\widetilde {\mathcal{G}}_n \rightharpoonup\frac{1}{2}\bigg(\mathcal{Q}_H\left\langle \mathcal{\widetilde V}^{\omega}_{(\cdot, \cdot)};  |{\mathcal{\sigma}}(\widetilde {\bf u})|  \right\rangle\bigg)^2+ \widetilde{\lambda}_{\mathcal{G}} \,\, \text{weak-$*$ in}\, \, L_{w*}^{\infty}(0,T; \mathcal{M}^+_b(\T^3)).
\end{align*}

\noindent
\textbf{Pass to limit in approximation of the momentum equation:}
By making use of \cite[Theorem 2.8]{BrFeHobook}, we have $\widetilde\p$-a.s.,
\begin{align}\label{weak limit 2}\mathcal{\sigma}(\widetilde{\vu}_n)\rightharpoonup\left\langle\widetilde{\mathcal{V}}_{t,x}^\omega;\mathcal{\sigma}(\widetilde{\vu})\right\rangle\,\,\text{weakly \,in}\,\,L^2([0,T];L^2(\mathbb{T}^3)).
	\end{align}
Note that the It\^o integral
$$ I_t:\varphi \rightarrow \int_0^t \varphi(s) \D \widetilde {W}(s) $$
is a linear and continuous (hence weakly continuous) map from $L^2(\Omega \times [0,T]; L^2(\T^3))$ to $L^2(\Omega; L^2(\T^3))$.
Therefore, we can make use of weak continuity of It\^{o} integral, and item $(4)$ of Proposition~\ref{prop:skorokhod1}, to conclude $I_t(\mathcal{\sigma} (\widetilde {\bf u}_n) )$ converges weakly to $I_t\big(\langle {\mathcal{\widetilde V}}^{\omega}_{t,x} ; \mathcal{\sigma} ( \widetilde{\bf u}) \rangle\big)$ in $L^2(\Omega;L^2(\mathbb{T}^3))$. Make use of above information and energy bounds we can conclude that
\begin{align}\label{above}
	\int_{\widetilde\Omega}\big\langle\langle\widetilde{\mathcal{V}}_{t,x}^\omega;\widetilde{\vu}\rangle,\bm{\varphi}\big\rangle \alpha({\omega})\D\widetilde\p({{\omega}})&=\int_{\widetilde\Omega}\Big[\big\langle\langle \widetilde{\mathcal{V}}_{0,x}^\omega;\widetilde{\vu}\rangle,\bm{\varphi}\big\rangle + \int_{0}^t\big\langle\langle\widetilde{\mathcal{V}}_{s,x};\widetilde{\vu}\otimes\widetilde{\vu}\rangle,\nabla\bm{\varphi}\big\rangle \D s+\int_{0}^t\int_{\T^3}\nabla\bm{\varphi}\D\widetilde{\lambda}_{\mathcal{C}}\notag\\&\qquad + \int_{0}^t\big\langle\langle\widetilde{\mathcal{V}}_{s,x}^\omega;\mathcal{\sigma}(\widetilde{\vu})\rangle,\bm{\varphi}\big\rangle\D \widetilde{W}(s)\Big]\alpha({\omega})\D\widetilde\p({{\omega}})
\end{align}
holds for all $t\in [0,T)$, for all $\alpha\in L^2(\widetilde{\Omega})$ and for all $\bm{\varphi} \in C^{\infty}(\T^3;\mathbb{R}^3)$. Since $C^\infty(\mathbb{T}^3)$ is separable space with sup norm, above equation \eqref{above} implies that
\begin{align}
	\big\langle\langle\widetilde{\mathcal{V}}_{t,x}^\omega;\widetilde{\vu}\rangle,\bm{\varphi}\big\rangle&=\big\langle\langle \widetilde{\mathcal{V}}_{0,x}^\omega;\widetilde{\vu}\rangle,\bm{\varphi}\big\rangle + \int_{0}^t\big\langle\langle\widetilde{\mathcal{V}}_{s,x};\widetilde{\vu}\otimes\widetilde{\vu}\rangle,\nabla\bm{\varphi}\big\rangle \D s+\int_{0}^t\int_{\T^3}\nabla\bm{\varphi}\D\widetilde{\lambda}_{\mathcal{C}}\notag\\&\qquad + \int_{0}^t\big\langle\langle\widetilde{\mathcal{V}}_{s,x}^\omega;\mathcal{\sigma}(\widetilde{\vu})\rangle,\bm{\varphi}\big\rangle\D \widetilde{W}(s)
\end{align} 
for all $t\in[0,T]$, $\widetilde{\p}$-a.s., for all $\bm{\varphi}\in C_{\dv}^{\infty}(\T^3)$.

\noindent
 { \textbf{$\mathbf{\tilde{\lambda}_{\mathcal{G}}}$ is nonnegative measure:}} It is clear from \eqref{weak limit 2} that $\widetilde{\p}$-a.s.,
\begin{align*}
	\mathcal{Q}_H(\mathcal{\sigma}(\widetilde{\vu}_n))\rightharpoonup \mathcal{Q}_H\big(\langle\widetilde{\mathcal{V}}_{t,x}^\omega;\mathcal{\sigma}(\widetilde{\vu})\rangle\big)\,\mbox{weakly in}\,L^2([0,T];(L^2(\T^3))^\perp).
\end{align*}
Let $\psi\in C([0,T]\times\T^3)$ with $\psi\,\ge 0$. Making use of weakly lower semi-continuity of norm, then we have $\widetilde\p$-a.s.,
\begin{align*}
	\Big\langle\psi\,\,,\,\,  |\mathcal{Q}_H\big(\langle\widetilde{\mathcal{V}}_{t,x}^\omega;\mathcal{\sigma}(\widetilde{\vu})\rangle\big)|^2\Big\rangle\le\,\liminf_{n\to\infty}\Big\langle\psi\,\,,\,\,  |\mathcal{Q}_H\big(\mathcal{\sigma}(\widetilde\vu_n)\big)|^2\Big\rangle
\end{align*}
It shows that $\tilde{\lambda}_{\mathcal{G}}$ is non-negative measure.

\noindent
\textbf{Energy inequality and concentration defect:} 
\noindent In this subsection, we show that appropriate form of energy inequality also holds for dissipative measure-valued martingale solutions in the following  three steps.

\noindent
\textbf{Step-1. Martingale term.}
\begin{Proposition}
	 $\widetilde{\p}$-a.s., $\widetilde {\mathcal{N}}_{n} \rightarrow \widetilde {\mathcal{N}}$ in $C([0,T];\R)$, and for all $t\in[0,T]$,  $\widetilde {\mathcal{N}}(t)$ is a real valued square-integrable martingale.
\end{Proposition}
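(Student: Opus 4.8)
The first assertion, $\widetilde{\mathcal{N}}_n\to\widetilde{\mathcal{N}}$ in $C([0,T];\R)$ $\widetilde{\p}$-a.s., is already contained in item~$(3)$ of Proposition~\ref{prop:skorokhod1}; in particular $\widetilde{\mathcal{N}}$ has $\widetilde{\p}$-a.s.\ continuous trajectories, being a locally uniform limit of continuous paths. The substance of the proposition is therefore the square integrability and the martingale property of $\widetilde{\mathcal{N}}$, and the plan is the classical martingale identification argument. First I would record an $n$-uniform moment bound: since $\mathcal{N}_n=\sum_{k\ge1}\int_0^{\cdot}\int_{\T^3}\vu_n\cdot\mathcal{P}_n\sigma_k(\vu_n)\dx\,\D W_k$ is a real It\^o martingale, the Burkholder--Davis--Gundy inequality together with a Cauchy--Schwarz estimate on the integrand, the growth hypothesis \eqref{FG1}, and the uniform energy estimate \eqref{uniform estimate} give, for some fixed $r>2$,
\[
\mathbb{E}\Big[\sup_{t\in[0,T]}|\mathcal{N}_n(t)|^{r}\Big]\le C\,\mathbb{E}\Big[\Big(\int_0^{T}\|\vu_n\|_{L^2(\T^3)}^{2}\big(1+\|\vu_n\|_{L^2(\T^3)}^{2}\big)\,\D s\Big)^{r/2}\Big]\le C
\]
uniformly in $n$ (this is essentially \eqref{compactness in time 1}). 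Since $\widetilde{\mathcal{N}}_n$ and $\mathcal{N}_n$ have the same law by Proposition~\ref{prop:skorokhod1}$(1)$, the same bound holds with $\widetilde{\mathcal{N}}_n$ and $\widetilde{\mathbb{E}}$; combined with the $\widetilde{\p}$-a.s.\ convergence, Vitali's theorem yields $\widetilde{\mathcal{N}}_n(t)\to\widetilde{\mathcal{N}}(t)$ in $L^{q}(\widetilde\Omega)$ for every $1\le q<r$ and every $t$, while Fatou's lemma gives $\widetilde{\mathbb{E}}\big[\sup_{t\in[0,T]}|\widetilde{\mathcal{N}}(t)|^{2}\big]<\infty$. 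In particular $\widetilde{\mathcal{N}}(t)$ is square integrable.

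For the martingale property, fix $0\le s\le t\le T$ and let $\gamma$ be an arbitrary bounded continuous functional on $C_w([0,s];L_{\dv}^{2}(\T^3))\times C([0,s];\mathfrak{U}_0)\times C([0,s];\R)$, and write $\mathbf{r}_s$ for the restriction of a path to $[0,s]$. Because $\mathcal{N}_n$ is a martingale with respect to the canonical filtration of $(\vu_n,W,\mathcal{N}_n)$ --- it is an It\^o integral against $W$, with $\vu_n$ being $(\mathcal{F}_t)$-adapted --- one has
\[
\mathbb{E}\Big[\big(\mathcal{N}_n(t)-\mathcal{N}_n(s)\big)\,\gamma\big(\mathbf{r}_s\vu_n,\mathbf{r}_sW,\mathbf{r}_s\mathcal{N}_n\big)\Big]=0,
\]
and equality of laws, Proposition~\ref{prop:skorokhod1}$(1)$, transfers this identity verbatim to the new probability space (both sides being finite by the moment bound above; alternatively one may invoke Proposition~\ref{prop:martsol}, since $\widetilde{\mathcal{N}}_n$ is then an It\^o integral against $\widetilde W$):
\[
\widetilde{\mathbb{E}}\Big[\big(\widetilde{\mathcal{N}}_n(t)-\widetilde{\mathcal{N}}_n(s)\big)\,\gamma\big(\mathbf{r}_s\widetilde{\vu}_n,\mathbf{r}_s\widetilde W,\mathbf{r}_s\widetilde{\mathcal{N}}_n\big)\Big]=0.
\]
Letting $n\to\infty$, using Proposition~\ref{prop:skorokhod1}$(3)$ so that the argument of $\gamma$ and the increment $\widetilde{\mathcal{N}}_n(t)-\widetilde{\mathcal{N}}_n(s)$ converge $\widetilde{\p}$-a.s., the continuity and boundedness of $\gamma$, and the $r>2$ moment bound to secure equi-integrability, one passes to the limit to obtain
\[
\widetilde{\mathbb{E}}\Big[\big(\widetilde{\mathcal{N}}(t)-\widetilde{\mathcal{N}}(s)\big)\,\gamma\big(\mathbf{r}_s\widetilde{\vu},\mathbf{r}_s\widetilde W,\mathbf{r}_s\widetilde{\mathcal{N}}\big)\Big]=0.
\]
As $\gamma$ ranges over this class, which generates the filtration $(\widetilde{\mathcal{F}}_t)$, this says exactly $\widetilde{\mathbb{E}}\big[\widetilde{\mathcal{N}}(t)-\widetilde{\mathcal{N}}(s)\mid\widetilde{\mathcal{F}}_s\big]=0$, i.e.\ $\widetilde{\mathcal{N}}$ is a square integrable continuous $(\widetilde{\mathcal{F}}_t)$-martingale.

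The only genuinely delicate point is the passage to the limit in the last display. It rests on two ingredients: equi-integrability of $\big(\widetilde{\mathcal{N}}_n(t)-\widetilde{\mathcal{N}}_n(s)\big)\gamma(\cdot)$, furnished by the $n$-uniform $L^{r}$ bound with $r>2$ inherited through equality of laws; and the continuity of $\gamma$ with respect to the product topology of the path space, in particular with respect to the \emph{weak} topology of $L^{2}_{\dv}(\T^3)$ in the velocity slot --- which is precisely why one tests only against functionals of this restricted form, a class that is nonetheless rich enough to characterize the conditional expectation given $\widetilde{\mathcal{F}}_s$. Everything else --- the continuity, adaptedness and square integrability of $\widetilde{\mathcal{N}}$ --- follows at once from the $\widetilde{\p}$-a.s.\ uniform convergence and Fatou's lemma. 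I expect this equi-integrability and weak-topology bookkeeping to be the main, though entirely routine, obstacle.
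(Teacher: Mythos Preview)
Your proposal is correct and follows essentially the same approach as the paper: invoke the almost sure $C([0,T];\R)$-convergence from Proposition~\ref{prop:skorokhod1}, use the uniform higher moment bound on $\widetilde{\mathcal{N}}_n$ to secure equi-integrability, and pass to the limit in the martingale identity via Vitali. The only cosmetic difference is that the paper, relying on the identification $(\widetilde{\mathcal{F}}_t^n)=(\widetilde{\mathcal{F}}_t)$ discussed just before (so that $\widetilde{\mathcal{N}}_n$ is an honest martingale for the \emph{same} filtration for every $n$), tests directly against indicators $\mathcal{I}_A$ with $A\in\widetilde{\mathcal{F}}_s$, whereas you use the more portable formulation with bounded continuous functionals $\gamma$ of the restricted paths; the latter is the safer choice when the limit filtration is built from the limit variables, but both routes yield the same conclusion.
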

\begin{proof}
	Note that, thanks to Proposition~\ref{prop:skorokhod1}, we have the information $\widetilde {\mathcal{N}}_{n} \rightarrow  \widetilde  {\mathcal{N}},$ $\widetilde{\p}$-a.s. in $C([0,T]; \R)$. To conclude that $\widetilde  {\mathcal{N}}(t)$ is a martingale, it is enough to show that
	$$\widetilde{\mathbb{E}}[\widetilde  {\mathcal{N}}(t)| \mathcal{\widetilde F}_s] = \widetilde{\mathcal{N}}(s),
	$$
	for all $t,s \in [0,T]$ with $s \le t$. To prove this, we have to show that, for $A\in\widetilde{\mathcal{F}}_s$
	$$ \widetilde{\mathbb{E}}\Big[ \mathcal{I}_{A} \big(\widetilde  {\mathcal{N}}(t)-\widetilde  {\mathcal{N}}(s)\big) \Big]=0,
	$$ 
	Now using the information that $\widetilde  {\mathcal{N}}_{n}(t)$ is a martingale, we know that
	$$\widetilde{\mathbb{E}}\Big[ \mathcal{I}_{A} \big(\widetilde  {\mathcal{N}}_{n}(t)-\widetilde  {\mathcal{N}}_{n}(s)\big) \Big]=0,
	$$
	for all $A\in\widetilde{\mathcal{F}}_s$. For each t, ${\widetilde{\mathcal{N}}_n}$ is uniformly bounded in $L^2(\widetilde{\Omega})$, with the help of Vitali's convergence theorem, we can pass to the limit in $n$ to conclude that $\widetilde  {\mathcal{N}}(t)$ is a martingale. In this manner, we insecure the structure of the martingale $\widetilde  {\mathcal{N}}(t)$, which is expected due of lack of sufficient regularity.
\end{proof}

\noindent
\textbf{Step 2. Control on concentration defect measures:}
\begin{Lemma}\label{rhoutight131}
	The concentration defect $0\le \mathcal{\widetilde H}(r):= \widetilde \lambda_{\mathcal{E}}(r)(\T^3)$ dominates defect measures $\widetilde \lambda_{\mathcal{D}}\,\, \&\,\,  \widetilde{\lambda}_{\mathcal{C}}$. More precisely, there exists a constant $C>0$ such that
	\begin{equation*} 
		\int_{0}^{r} \int_{\T^3} d|\widetilde \lambda_{\mathcal{C}}| + \int_{0}^{r} \int_{\T^3} d|\widetilde \lambda_{\mathcal{D}}|\leq C \int_{0}^{r} \mathcal{\widetilde H}(r)\,dt,
	\end{equation*}	
	$\widetilde{\p}$-a.s., for all $r \in (0,T)$.
\end{Lemma}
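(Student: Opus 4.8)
The plan is to exploit the pointwise (in $x$) structure of Young measures together with Jensen-type inequalities to bound the concentration parts of $\widetilde{\mathcal C}_n$ and $\widetilde{\mathcal D}_n$ by the concentration part of the energy $\widetilde{\mathcal E}_n$. First I would record that, because $\vu_n\otimes\vu_n$, $\tfrac12|\sigma(\vu_n)|^2$ and $\tfrac12|\vu_n|^2$ are all bounded pointwise by a fixed multiple of $\tfrac12|\vu_n|^2$ (for the stress term this is the elementary inequality $|\mathbf a\otimes \mathbf a|\le |\mathbf a|^2$, and for the noise term it is the growth assumption \eqref{FG1}, namely $\sum_{k}|\sigma_k(\vu_n)|^2\le D_0(1+|\vu_n|^2)$), the corresponding ``total'' measures on $[0,T]\times\T^3$ are mutually absolutely continuous with densities controlled by $\widetilde{\mathcal E}_n$. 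Passing to the weak-$*$ limit along the subsequence furnished by Proposition~\ref{prop:skorokhod1}, and using that weak-$*$ limits of nonnegative measures are monotone under such domination, one obtains that the limiting measures $\widetilde{\mathcal C}$, $\widetilde{\mathcal D}$, $\widetilde{\mathcal E}$ satisfy $|\widetilde{\mathcal C}|\le C\,\widetilde{\mathcal E}$ and $\widetilde{\mathcal D}\le C\,\widetilde{\mathcal E}$ as measures, $\widetilde{\p}$-a.s.

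Next I would separate the ``oscillation'' (barycentre) part from the ``concentration'' part. By definition $\widetilde\lambda_{\mathcal C}=\widetilde{\mathcal C}-\langle\widetilde{\mathcal V}; \widetilde\vu\otimes\widetilde\vu\rangle\,dx\,dt$, and similarly for $\widetilde\lambda_{\mathcal D}$ and $\widetilde\lambda_{\mathcal E}=:\widetilde{\mathcal H}\,dt$ in the sense that $\widetilde{\mathcal H}(r)=\widetilde\lambda_{\mathcal E}(r)(\T^3)$. The key pointwise bound is Jensen's inequality applied to the probability measure $\widetilde{\mathcal V}^\omega_{t,x}$: since $\xi\mapsto|\xi|^2$ is convex, $|\langle\widetilde{\mathcal V}^\omega_{t,x};\widetilde\vu\otimes\widetilde\vu\rangle|\le\langle\widetilde{\mathcal V}^\omega_{t,x};|\vu|^2\rangle$ and likewise, via \eqref{FG1}, $\tfrac12\langle\widetilde{\mathcal V}^\omega_{t,x};|\sigma(\vu)|^2\rangle\le C(1+\langle\widetilde{\mathcal V}^\omega_{t,x};\tfrac12|\vu|^2\rangle)$. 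Hence the barycentre parts are themselves dominated (pointwise, and therefore as measures) by $\langle\widetilde{\mathcal V};\tfrac12|\vu|^2\rangle\,dx$. Combining the domination of the full measures with the domination of the barycentre parts, and using that $\widetilde\lambda_{\mathcal E}\ge0$ (which forces $\widetilde{\mathcal E}\ge\langle\widetilde{\mathcal V};\tfrac12|\vu|^2\rangle\,dx$, so the splitting is between nonnegative pieces), one gets
\begin{align*}
|\widetilde\lambda_{\mathcal C}|+|\widetilde\lambda_{\mathcal D}|
&\le C\big(\widetilde{\mathcal E}-\langle\widetilde{\mathcal V};\tfrac12|\vu|^2\rangle\,dx\big)
= C\,\widetilde\lambda_{\mathcal E}
\end{align*}
as measures on $[0,T]\times\T^3$, $\widetilde{\p}$-a.s.; integrating over $(0,r)\times\T^3$ and recalling $\widetilde{\mathcal H}(r)=\widetilde\lambda_{\mathcal E}(r)(\T^3)$ yields the claimed estimate.

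The main obstacle is the bookkeeping needed to make the weak-$*$ limit argument rigorous when the measures are only defined up to $dt$-null sets and the Young measure is only a parametrised family for a.e.\ $(t,x)$: one must justify that the inequality ``$\le C\,\widetilde{\mathcal E}$'' survives passage to the limit simultaneously for all three sequences on the same probability space — this is where the a.s.\ convergence from Proposition~\ref{prop:skorokhod1} is essential, allowing one to argue $\widetilde\omega$ by $\widetilde\omega$ — and that the decomposition into barycentre plus concentration defect is consistent across $\widetilde{\mathcal C}$, $\widetilde{\mathcal D}$, $\widetilde{\mathcal E}$ (i.e.\ the same Young measure $\widetilde{\mathcal V}$ represents all three weak limits, which is guaranteed by the Young-measure representation theorem \cite[Theorem 2.8]{BrFeHobook} already invoked above). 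Once the pointwise convexity inequalities are in place, the remaining steps are routine measure-theoretic manipulations.
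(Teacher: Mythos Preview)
Your overall strategy and the ingredients you single out---the pointwise bounds $|\vu\otimes\vu|\le |\vu|^2$ and $\sum_k|\sigma_k(\vu)|^2\le D_0(1+|\vu|^2)$, together with the Young-measure representation of the barycentres---are exactly the ones behind the result. The paper's own proof is in fact nothing more than an appeal to \cite[Lemma~2.3]{MKS01}, which packages precisely this mechanism; so in spirit your plan and the paper's proof coincide, with your write-up attempting to unpack the cited lemma.

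There is, however, a genuine gap in your ``combining'' step. From $|\widetilde{\mathcal C}|\le C\,\widetilde{\mathcal E}$ and $|\langle\widetilde{\mathcal V};\vu\otimes\vu\rangle|\le 2\langle\widetilde{\mathcal V};\tfrac12|\vu|^2\rangle$ you cannot conclude
\[
|\widetilde\lambda_{\mathcal C}|
=\big|\widetilde{\mathcal C}-\langle\widetilde{\mathcal V};\vu\otimes\vu\rangle\,dx\,dt\big|
\le C\big(\widetilde{\mathcal E}-\langle\widetilde{\mathcal V};\tfrac12|\vu|^2\rangle\,dx\big).
\]
In general, $|A|\le CE$ and $|B|\le CF$ with $0\le F\le E$ do \emph{not} imply $|A-B|\le C(E-F)$ (take $A=1$, $B=-1$, $E=F=1$). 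The subtraction of the two dominated pieces loses all control unless the pieces are compatible in a stronger sense than mere size.

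The standard remedy---and this is what \cite[Lemma~2.3]{MKS01} actually does---is a truncation argument. One fixes $M>0$ and splits
\[
F(\widetilde\vu_n)=F(\widetilde\vu_n)\,\mathbf 1_{\{|\widetilde\vu_n|\le M\}}+F(\widetilde\vu_n)\,\mathbf 1_{\{|\widetilde\vu_n|>M\}},
\qquad F(\xi)\in\{\xi\otimes\xi,\ \tfrac12|\sigma(\xi)|^2\}.
\]
The first summand is bounded and continuous in $\widetilde\vu_n$, so its weak limit is exactly $\langle\widetilde{\mathcal V};F\,\mathbf 1_{\{|\vu|\le M\}}\rangle$ by the Young-measure theorem; hence \emph{all} of the defect $\widetilde\lambda_{\mathcal C}$ (resp.\ $\widetilde\lambda_{\mathcal D}$) is produced by the tail piece. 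On that tail the pointwise bound $|F(\xi)|\le c(1+|\xi|^2)\le c'\,|\xi|^2$ for $|\xi|>M$ applies, and the same decomposition for $\tfrac12|\widetilde\vu_n|^2$ shows that the tail of the energy converges to $\widetilde\lambda_{\mathcal E}$ plus an $M$-dependent remainder that vanishes as $M\to\infty$. Passing to the limit in $n$ and then $M\to\infty$ gives $|\widetilde\lambda_{\mathcal C}|+|\widetilde\lambda_{\mathcal D}|\le C\,\widetilde\lambda_{\mathcal E}$ directly, without the illegitimate subtraction. Once you replace your combination step by this cut-off argument, the rest of your outline goes through and matches the content of the cited lemma.
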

\begin{proof}
	With the help of \cite[Lemma 2.3]{MKS01}, it is clear that $\widetilde \lambda_{\mathcal{E}}$ dominates defect measures $\widetilde \lambda_{\mathcal{C}} $. To show the dominance of $\widetilde \lambda_{\mathcal{E}}$ over $\widetilde \lambda_{\mathcal{D}}$, observe that, by virtue of hypotheses (\ref{FG1}), (\ref{FG2}), the function
	\[
	[\vc{u}] \mapsto { |{\mathcal{\sigma}}({\bf u})|^2 } \ \mbox{is continuous},
	\]
	and as such dominated by the total energy
	\[
	 { |{ \mathcal{\sigma}}({\bf u}) |^2 }\leq c \left( 1 + {| {\bf u} |^2} \right) 
	\]
	Hence, a consequence of \cite[Lemma 2.3]{MKS01} completes the proof of the lemma. 
\end{proof}

\noindent
\textbf{Step 3. Energy inequality.} 
Use the fact that $\|\mathcal{P}_n(\mathcal{\sigma}(\widetilde{\vu}_n))\|_{L^2(\T^3)}^2\le\|\mathcal{P}_H(\mathcal{\sigma}(\widetilde{\vu}_n))\|_{L^2(\T^3)}^2$ then from \eqref{L0}, we have $\widetilde{\p}$-a.s, for all $s\,\textless\,t\in[0,T]$
\begin{align*}\label{L4}
	\frac{1}{2}\|\widetilde{\vu}_n(t)\|_{L^2(\T^3)}^2 &\le\,\frac{1}{2}\|\widetilde{\vu}_n(s)\|_{L^2(\mathbb{T}^3)}^2 + \int_s^t\int_{\mathbb{T}^3}\widetilde{\vu}_n(s)\mathcal{P}_n(\mathcal{\sigma}(\widetilde{\vu}_n(s)))\D W(s) + \int_s^t\|\mathcal{P}_n(\mathcal{\sigma}(\widetilde{\vu}_n))\|_{L^2(\mathbb{T}^3)}^2\D s
\end{align*}
Now we would like to pass limit $n\to\infty$ in energy inequality. Let $r,\,\delta\textgreater\,0$ (small enough), then from energy inequality, $\widetilde{\p}$-a.s., for all $s\,\textless\,\tau\in[0,T]$ 
\begin{align*}
	\frac{1}{2r}\int_\tau^{\tau+r}\int_{\T^3}|\widetilde{\vu}_n(t)|^2\dx\D t  \le\,&\frac{1}{2\delta}\int_{s-\delta}^s\int_{\T^3}|\widetilde{\vu}_n(a)|^2\dx\D a\\& +\frac{1}{r}\int_{\tau}^{\tau+r}\Big(\frac{1}{\delta}\int_{s-\delta}^s\int_a^b\int_{\mathbb{T}^3}\widetilde{\vu}_n(s)\mathcal{P}_n(\mathcal{\sigma}(\widetilde{\vu}_n(t)))\D W(t)\D a\Big)\D b \\&+\frac{1}{r}\int_{\tau}^{\tau+r}\Big(\frac{1}{\delta}\int_{s-\delta}^s\int_a^b\|\mathcal{\sigma}(\widetilde{\vu}_n(t))\|_{L^2(\mathbb{T}^3)}^2\D t\D a\Big)\D b\\&-\frac{1}{r}\int_\tau^{\tau+r}\Big(\frac{1}{\delta}\int_{s-\delta}^s\int_a^b\int_{\T^3}|\mathcal{Q}_H(\mathcal{\sigma}(\widetilde{\vu}_n(t)))|^2\D x\D t \D a\Big)\D b
\end{align*}
Now pass to limit $n\to\infty$ in above equation and make use of information of limits. Then $\widetilde\p$-a.s., for all $s\,\textless\,\tau\in[0,T]$
\begin{align*}
	\frac{1}{r}\int_\tau^{\tau+r}&\int_{\T^3}\left\langle\widetilde{\mathcal{V}}_{t,x}^\omega;\frac{1}{2}|\widetilde{\vu}|^2\right\rangle\dx\D t + \int_\tau^{\tau+r}\widetilde{\mathcal{H}}(t)\D t\,\le\,\frac{1}{\delta}\int_{s-\delta}^s\int_{\T^3}\left\langle\widetilde{\mathcal{V}}_{a,x}^\omega;\frac{1}{2}|\widetilde{\vu}|^2\right\rangle\dx \D a\\&+\frac{1}{r}\int_{\tau}^{\tau+r}\Big(\frac{1}{\delta}\int_{s-\delta}^s\big(\widetilde{\mathcal{N}}(t)-\widetilde{\mathcal{N}}(a)\big)\D a\Big)\D t\notag\\& +\frac{1}{r}\int_{\tau}^{\tau+r}\Big(\frac{1}{\delta}\int_{s-\delta}^s\int_a^t\int_{\T^3}\Big(\left\langle \mathcal{\widetilde V}^{\omega}_{(a,x)};\frac{1}{2}|{ \mathcal{\sigma}} (\widetilde {\bf u}) |^2\right\rangle + \D\widetilde \lambda_{\mathcal{D}}(x,a)\Big) \dx\D a\Big)\D t\\& -\frac{1}{r}\int_{\tau}^{\tau+r}\Big(\frac{1}{\delta}\int_{s-\delta}^s\int_a^t\int_{\T^3}\Big(\bigg(\mathcal{Q}_H\left\langle \mathcal{\widetilde V}^{\omega}_{(a,x)}; \frac{1}{2} |{\mathcal{\sigma}}(\widetilde {\bf u})|  \right\rangle\bigg)^2-\D \widetilde{\lambda}_{\mathcal{G}}(x,a)\Big)\dx\D a\Big)\D t
\end{align*}
Letting $r, \delta\to 0$ and use that $\widetilde{\lambda}_{\mathcal{G}}$ is non-negative measure, then we have $\widetilde\p$-a.s., for all $s\,\textless\,\tau\in[0,T]$
\begin{align} 
	\lim_{r\to 0}\frac{1}{r}\int_\tau^{\tau+r}&\Big[\int_{\T^3}\left\langle\widetilde{\mathcal{V}}_{t,x}^\omega;\frac{1}{2}|\widetilde{\vu}|^2\right\rangle\dx\D t +\widetilde{\mathcal{H}}(t)\Big]\D t\,\le\,\lim_{\delta\to 0}\frac{1}{\delta}\int_{s-\delta}^s\Big[\int_{\T^3}\left\langle\widetilde{\mathcal{V}}_{a,x}^\omega;\frac{1}{2}|\widetilde{\vu}|^2\right\rangle\dx +\widetilde{\mathcal{H}}(a)\Big]\D a\notag\\&+\widetilde{\mathcal{N}}(\tau)-\widetilde{\mathcal{N}}(s)+\int_s^\tau\int_{\T^3}\left\langle \mathcal{\widetilde V}^{\omega}_{(t,x)};\frac{1}{2}|{\mathcal{\sigma}} (\widetilde {\bf u}) |^2\right\rangle \dx\D t + \int_s^\tau\int_{\T^3}\D\widetilde \lambda_{\mathcal{D}}(x,s)\D t\notag\\&  -\int_s^\tau\int_{\T^3}\bigg(\mathcal{Q}_H\left\langle \mathcal{\widetilde V}^{\omega}_{(t,x)}; \frac{1}{2} |{\mathcal{\sigma}}(\widetilde {\bf u})|  \right\rangle\bigg)^2\dx\D t
\end{align}
\section{Proof of Theorem~\ref{dissipative solution}: Convergence to a dissipative solution}\label{Section 6}
\label{dissipative solution 1}

With the help of the Proposition~\ref{prop:skorokhod1}, and convergence results given by \eqref{weak limit}, we conclude that there exists a subsequence $\widetilde{\vu}_{n_k}$ such that $\widetilde{\p}$-a.s,
$$\widetilde{\vu}_{n_k}\to\langle {\widetilde{\mathcal{V}}^{\omega}_{t,x}}; \widetilde{\vu} \rangle\,\,\mbox{in}\,\,\,C_w([0,T],L_{\dv}^2(\mathbb{T}^3)).$$
For the pointwise converegnce of approximations, we can make use of \cite[Proposition 2.4]{K1}. Indeed, we obtain $\widetilde{\p}$-a.s., there exists a subsequece $\widetilde{\vu}_{n_k}$ such that
\begin{equation*}
	\begin{aligned}
		\frac 1N \sum_{k=1}^N \widetilde{\vu}_{n_k} &\to\langle {\widetilde{\mathcal{V}}^{\omega}_{t,x}}; \widetilde{\vu} \rangle, \ \mbox{as $N \rightarrow \infty$ a.e. in} \,\,(0,T)\times\T^3.
	\end{aligned}
\end{equation*}	
\section{Weak-strong uniqueness}\label{Section 7}
	\textbf{Weak It\^o formula:} Here we give some outlines of proof of weak strong uniqueness. In this section, we prove Theorem~\ref{Weak-Strong Uniqueness_01} through some auxiliary results.
\begin{Lemma}[Weak It\^o formula]\label{Ito}
	Let $\mathbf{\bf V}$ be a stochastic process on $\big(\Omega,\mathfrak{F}, (\mathfrak{F}_{t})_{t\geq0},\mathbb{P} \big)$ such that 
	
	$$ \mathbf{V}\in C_w([0,T];L^2_{\dv }(\mathbb{T}^3))\cap L^{\infty}((0,T);L^2_{\dv}(\mathbb{T}^3))\,\qquad\,\mathbb{P}-\text{a.s.} $$
	
	$$\mathbb{E}\big[\sup_{t\in[0,T]}\|\mathbf{V}\|_{L^2_{\dv}(\mathbb{T}^3)}^2\big]\,\textless\,\infty,$$
	\begin{equation}\label{eq1}
		\begin{aligned}
		\int_{\mathbb{T}^3}\mathbf{V}(t)\cdot\bm{\varphi}\,dx=&\int_{\mathbb{T}^3}\mathbf{V}(0)\cdot\bm{\varphi}\,dx + \int_{0}^t\int_{\mathbb{T}^3}\mathbf{V}_1:\nabla\bm{\varphi}dx ds+\int_0^t\int_{\mathbb{T}^3}\nabla\bm{\varphi}:\,d\lambda(x,s) \,ds\\& + \int_0^t\langle\bm{\varphi},\mathcal{\sigma}(\mathbf{V})\rangle dW(s)
		\end{aligned}
	\end{equation}

	for all $\bm{\varphi}\in C^{\infty}(\T^3)$, for all $t\in[0,T]$, $\mathbb{P}$-a.s. Here $\mathbf{V}_1,\,\,\lambda $ satisfies with
	
	$$\mathbf{V}_1\in L^2(\Omega;L^1(0,T;L^1(\mathbb{T}^3))),\qquad \lambda\in L^1(\Omega;L_{w*}^\infty(0,T;\mathcal{M}_b(\mathbb{T}^3))).$$
	 
	Let $\mathbf{U}$ be a stochastic process on $\big(\Omega,\mathfrak{F}, (\mathfrak{F}_{t})_{t\geq0},\mathbb{P} \big)$ satisfying
	$$\mathbf{U}\in C([0,T]; C^1(\mathbb{T}^3)),\,\mathbb{P}-\text{a.s.}\,\,\text{and}\,\,\,\,\mathbb{E}\big[\sup_{t\in[0,T]}\|\mathbf{U}\|_{L_{\dv}^2(\mathbb{T}^3)\cap {C}(\mathbb{T}^3)}^2\big]\textless\,\infty,$$
	\begin{align}\label{eq}\D\mathbf{U}={\mathbf{U}_1}{\rm d}t + \mathbf{U}_2 \,\D W
	 \end{align}
	Here $\mathbf{U}_1, \mathbf{U}_2$ are progressible measurable with
	$$\mathbf{U}_1\in L^2(\Omega;L^1((0,T);L^2_{\dv}(\mathbb{T}^3)))\,\qquad\,\mathbf{U}_2\in L^2(\Omega;L^2((0,T);L_2(\mathfrak{U};L_{\dv}^2(\mathbb{T}^3))))$$.
	$$\sum_{k}^\infty\int_0^T\|\mathcal{P}_H\mathbf{U}_2(e_k)\|^2_{L^2(\mathbb{T}^3)}\in L^1(\Omega).$$
	
	Then, for all $t\in[0,T],\,\,\mathbb{P}$-a.s
	\begin{align}\label{eq4}
		\int_{\mathbb{T}^3}\mathbf{V}(t)\cdot\mathbf{U}(t)\dx&=\int_{\mathbb{T}^3}\mathbf{V}(0)\cdot\mathbf{U}(0)\dx + \int_0^t\int_{\mathbb{T}^3} \mathbf{V}_1:\nabla \mathbf{U} \dx\,{\rm d}s+\int_0^t\int_{\mathbb{T}^3}\nabla\mathbf{U}:\,\D\lambda\,\D s\notag\\
		&\qquad+\int_0^t\int_{\mathbb{T}^3}\mathbf{U}\cdot \mathcal{\sigma}(V)\dx \D W\,+\int_0^t\int_{\mathbb{T}^3}\mathbf{U}_1\cdot\mathbf{V}\,\dx\D s + \int_0^t\int_{\mathbb{T}^3}\mathbf{V}\cdot\mathbf{U}_2\,\D W\,\dx\notag\\
		&\qquad+\int_0^t\int_{\mathbb{T}^3}\mathcal{P}_{H}(\mathcal{\sigma} (\mathbf V))\mathcal{P}_{H}(\mathcal{\sigma}(\mathbf U))\dx \D s\,
	\end{align}
\end{Lemma}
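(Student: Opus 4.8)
The plan is to reduce the infinite–dimensional identity to the classical one–dimensional It\^o product rule by pairing both processes with a fixed countable family of smooth divergence–free fields. Fix an orthonormal basis $\{\bm\psi_i\}_{i\ge 1}$ of $L^2_{\dv}(\T^3)$ consisting of real trigonometric polynomials, so that $\bm\psi_i\in C^\infty_{\dv}(\T^3)$ and $\mathcal{P}_H\bm\psi_i=\bm\psi_i$ for every $i$. Two forms of Parseval's identity will be used: for $\mathbf{a},\mathbf{b}\in L^2_{\dv}(\T^3)$ one has $\langle\mathbf{a},\mathbf{b}\rangle_{L^2}=\sum_i\langle\mathbf{a},\bm\psi_i\rangle\langle\mathbf{b},\bm\psi_i\rangle$, while for arbitrary $\mathbf{a},\mathbf{b}\in L^2(\T^3;\R^3)$ the same series equals $\langle\mathcal{P}_H\mathbf{a},\mathcal{P}_H\mathbf{b}\rangle_{L^2}$ by \eqref{property1}; this second form is exactly what produces the Helmholtz–projected noise correction in \eqref{eq4}.

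\emph{The scalar processes.} Set $d_i(t):=\int_{\T^3}\mathbf{V}(t)\cdot\bm\psi_i\,\dx$ and $c_i(t):=\int_{\T^3}\mathbf{U}(t)\cdot\bm\psi_i\,\dx$. Since $\mathbf{V}\in C_w([0,T];L^2_{\dv})$, the map $t\mapsto d_i(t)$ is continuous, and taking $\bm\varphi=\bm\psi_i$ in \eqref{eq1} exhibits $d_i$ as a continuous semimartingale with bounded–variation part $\int_0^t\big(\langle\mathbf{V}_1,\nabla\bm\psi_i\rangle+\langle\nabla\bm\psi_i,\lambda\rangle\big)\,\D s$, which is finite $\prst$–a.s.\ by the assumed integrability of $\mathbf{V}_1$ and $\lambda$, and local–martingale part $\sum_k\int_0^t\langle\bm\psi_i,\mathcal{\sigma}_k(\mathbf{V})\rangle\,\D W_k$. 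Likewise, pairing \eqref{eq} with $\bm\psi_i$ gives $c_i(t)=c_i(0)+\int_0^t\langle\mathbf{U}_1,\bm\psi_i\rangle\,\D s+\sum_k\int_0^t\langle\mathbf{U}_2(e_k),\bm\psi_i\rangle\,\D W_k$, a continuous semimartingale by hypothesis \eqref{eq}.

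\emph{One–dimensional product rule and summation.} Apply the scalar It\^o formula to the product:
\begin{align*}
	c_i(t)d_i(t)=c_i(0)d_i(0)+\int_0^t c_i\,\D d_i+\int_0^t d_i\,\D c_i+\langle c_i,d_i\rangle_t,
\end{align*}
where $\langle c_i,d_i\rangle_t=\sum_k\int_0^t\langle\mathbf{U}_2(e_k),\bm\psi_i\rangle\langle\bm\psi_i,\mathcal{\sigma}_k(\mathbf{V})\rangle\,\D s$. Summing over $i\le N$ and letting $N\to\infty$, Parseval converts $\sum_i c_i d_i$ into $\int_{\T^3}\mathbf{V}\cdot\mathbf{U}\,\dx$; it converts $\sum_i c_i\langle\mathbf{V}_1,\nabla\bm\psi_i\rangle$ and $\sum_i c_i\langle\nabla\bm\psi_i,\lambda\rangle$ into $\int_{\T^3}\mathbf{V}_1:\nabla\mathbf{U}\,\dx$ and $\int_{\T^3}\nabla\mathbf{U}:\D\lambda$, using $\nabla\mathbf{U}=\sum_i c_i\nabla\bm\psi_i$ with convergence in $C(\T^3)$ thanks to $\mathbf{U}\in C([0,T];C^1)$; it converts $\sum_i d_i\langle\mathbf{U}_1,\bm\psi_i\rangle$ and $\sum_i d_i\langle\mathbf{U}_2(e_k),\bm\psi_i\rangle$ into $\int_{\T^3}\mathbf{U}_1\cdot\mathbf{V}\,\dx$ and $\int_{\T^3}\mathbf{V}\cdot\mathbf{U}_2(e_k)\,\dx$; it converts $\sum_i c_i\langle\bm\psi_i,\mathcal{\sigma}_k(\mathbf{V})\rangle$ into $\int_{\T^3}\mathbf{U}\cdot\mathcal{\sigma}_k(\mathbf{V})\,\dx$; and, by the Helmholtz form of Parseval, it converts the covariation series $\sum_i\langle\mathbf{U}_2(e_k),\bm\psi_i\rangle\langle\bm\psi_i,\mathcal{\sigma}_k(\mathbf{V})\rangle$ into $\int_{\T^3}\mathcal{P}_H\mathbf{U}_2(e_k)\cdot\mathcal{P}_H\mathcal{\sigma}_k(\mathbf{V})\,\dx$. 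Summing the stochastic and covariation contributions over $k$ recovers the noise terms of \eqref{eq4} (with $\mathbf{U}_2$ playing the role of $\mathcal{P}_H\mathcal{\sigma}(\mathbf{U})$ as in Definition \ref{def:strsol}); collecting all terms gives \eqref{eq4}.

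\emph{Main obstacle.} The delicate point is the exchange of $\sum_i$ with the time and stochastic integrals, i.e.\ passing to the limit \emph{inside} the integrals rather than term by term. For the bounded–variation contributions this is handled by writing each partial sum as a pairing of $\mathbf{V}_1$, $\lambda$, or $\mathcal{\sigma}(\mathbf{V})$ with $\nabla\sum_{i\le N}c_i\bm\psi_i$ or $\sum_{i\le N}c_i\bm\psi_i$ and invoking the uniform convergence of these toward $\nabla\mathbf{U}$, $\mathbf{U}$ in $C(\T^3)$, against the only available bounds $\mathbf{V}_1\in L^1$, $\lambda\in\mathcal{M}_b$, and the linear growth \eqref{FG1}–\eqref{FG2} of $\mathcal{\sigma}$, together with dominated convergence in $\omega$ and $t$. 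For the stochastic integrals one passes to the limit in $L^2(\Omega)$ (equivalently, in probability) using the It\^o isometry and the Burkholder–Davis–Gundy inequality, which is legitimate because of the integrability assumptions on $\mathbf{V}$, $\mathbf{U}_2$, and $\sum_k\int_0^T\|\mathcal{P}_H\mathbf{U}_2(e_k)\|_{L^2}^2\in L^1(\Omega)$. A technically equivalent alternative is to mollify $\mathbf{V}$ in space, apply the It\^o formula to the regularised pairing $\int_{\T^3}(\mathbf{V}*\rho_\delta)\cdot\mathbf{U}\,\dx$, and then remove the mollification with the same bounds; the spectral argument above is essentially its truncated version, and I expect the book‑keeping of these limit passages—especially the tail estimate for the measure term $\lambda$—to be the main technical burden.
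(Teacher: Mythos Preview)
Your overall strategy---pair both processes with an orthonormal basis of divergence--free fields, apply the scalar It\^o product rule componentwise, and resum via Parseval---is exactly the route the paper takes. The difference is in the order of operations: the paper \emph{first} mollifies $\mathbf{V}$ in space (testing \eqref{eq1} against $\bm\varphi*\rho_r$ and transferring the convolution) to obtain a genuine strong equation for $\mathbf{V}_r$ in which $\mathcal{P}_H\Div(\mathbf{V}_1)_r$ and $\mathcal{P}_H\Div\lambda_r$ are honest $L^2$ functions; the basis expansion and Parseval identity are then applied at fixed $r$, and only afterwards is $r\to 0$ taken. You present this mollification route only as a ``technically equivalent alternative'' at the end, but it is in fact the version that works cleanly here.

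Your primary argument has a gap at the step ``$\nabla\mathbf{U}=\sum_i c_i\nabla\bm\psi_i$ with convergence in $C(\T^3)$ thanks to $\mathbf{U}\in C([0,T];C^1)$''. For a trigonometric basis in three dimensions the hypothesis $\mathbf{U}\in C^1(\T^3)$ does not guarantee uniform convergence of the differentiated partial sums; Fourier partial sums are not uniformly bounded on $C(\T^3)$, so $\sum_{i\le N}c_i\nabla\bm\psi_i$ need not converge in $L^\infty$. Consequently the pairings of $\sum_{i\le N}c_i\nabla\bm\psi_i$ with $\mathbf{V}_1\in L^1$ and with the measure $\lambda\in\mathcal{M}_b$ are not controlled as $N\to\infty$, which is precisely where the low regularity of $\mathbf{V}_1$ and $\lambda$ bites. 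The paper's mollification sidesteps this by moving the derivative onto the smooth objects $(\mathbf{V}_1)_r$ and $\lambda_r$, after which Parseval in $L^2$ suffices and no $C^1$--convergence of partial sums is required; this is exactly the ``main technical burden'' you anticipated, and the resolution is to mollify first rather than last.
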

\begin{proof}
	Let $\bm{\varphi}\in L^2_{\text{div}}(\mathbb{T}^3)$, then $\bm{\varphi}_r = \bm{\varphi}*\bm{\rho}_r\,\in C_{\text{div}}^\infty(\mathbb{T}^3)$, we have $\mathbb{P}$-a.s., for all $t\in[0,T]$
	\begin{align*}
		\int_{\mathbb{T}^3}\mathbf{V}(t)\cdot\bm{\varphi}_r\,dx&=\int_{\mathbb{T}^3}\textbf{V}(0)\cdot\bm{\varphi}_r\,dx + \int_{0}^t\int_{\mathbb{T}^3}\mathbf{V}_1:\nabla\bm{\varphi}_rdx ds+\int_0^t\int_{\mathbb{T}^3}\nabla\bm{\varphi}_r:\,d\lambda(x,s) \,ds \\&\qquad+ \int_{\mathbb{T}^3}\bm{\varphi}_r\cdot\int_0^t\mathcal{\sigma}(\mathbf{V})\D W(s) \dx
	\end{align*}
After shiftting molification on other variable, then we have $\mathbb{P}$-a.s., for all $t\in[0,T]$
\begin{align*}
	\int_{\mathbb{T}^3}\mathbf{V}_r(t)\cdot\bm{\varphi}\,dx&=\int_{\mathbb{T}^3}\textbf{V}_r(0)\cdot\bm{\varphi}\,dx + \int_{0}^t\int_{\mathbb{T}^3}(\mathbf{V}_1)_r:\nabla\bm{\varphi}dx ds+\int_0^t\int_{\mathbb{T}^3}\nabla\bm{\varphi}:\,d\lambda_r(x,s) \,ds\\&\qquad +\int_{\mathbb{T}^3}\bm{\varphi}\cdot\int_0^t\mathcal{\sigma}(V)_r\D W(s)\, \dx,
\end{align*}
It implies that $\mathbb{P}$-a.s., for all $t\in[0,T]$
\begin{align}\label{eq2}
	\mathbf{V}_r(t)&=\textbf{V}_r(0) -\int_0^t\mathcal{P}_H\big(\Div(\mathbf{V}_1)_r \big){\rm d}s-\int_0^t\,\mathcal{P}_H\big(\Div \lambda_r\big)  ds +\int_0^t\mathcal{P}_H(\mathcal{\sigma}(V)_r)\D W(s)\,
\end{align}
Let $(e_i)_{i\ge\,1}$ is countable orthonormal basis of $L^2(\T^3)$, and from equations \eqref{eq}-\eqref{eq2}, we have $\p$-a.s. , for all $t\in[0,T]$
\begin{align*}
	\langle \mathbf{U}(t),e_i\rangle=\langle \mathbf{U}(0),e_i\rangle + \int_0^T\langle\mathbf{U}_1(s),e_i\rangle \D s +\int_0^t\langle\mathcal{P}_H(\mathcal{\sigma}(\mathbf{U})),e_i\rangle\D W(s)
\end{align*}
and 
\begin{align*}
	\langle\mathbf{V}_r(t),e_i\rangle&=\langle\textbf{V}_r(0),e_i\rangle -\int_0^t\langle\mathcal{P}_H\big(\Div(\mathbf{V}_1)_r \big),e_i\rangle{\rm d}s-\int_0^t\,\langle\mathcal{P}_H\big(\Div \lambda_r\big),e_i\rangle  ds \notag\\&\qquad+\int_0^t\langle\mathcal{P}_H(\mathcal{\sigma}(V))_r,e_i\rangle\D W(s)\,
\end{align*}
Now, we apply It\^o product rule to $t\mapsto\langle \mathbf{U}(t),e_i\rangle\cdot\langle \mathbf{V}_r(t),e_i\rangle$, we have $\p$-a.s, for all $t\in[0,T]$
\begin{align*}
	\langle \mathbf{U}&(t),e_i\rangle\cdot\langle \mathbf{V}_r(t),e_i\rangle=\langle \mathbf{U}(0),e_i\rangle\cdot\langle \mathbf{V}_r(0),e_i\rangle + \int_0^t \langle\mathcal{P}_H\big(\Div(\mathbf{V}_1)_r \big),e_i\rangle\cdot\langle\mathbf{U}(s),e_i\rangle \D s\notag\\& + \int_0^t\langle\mathcal{P}_H\big(\Div \lambda_r\big),e_i\rangle\cdot\langle\mathbf{U}(s),e_i\rangle \D s + \int_0^t\langle\mathcal{P}_H(\mathcal{\sigma}(V))_r,e_i\rangle\cdot \langle\mathbf{U}(s),e_i\rangle\D W(s)\notag\\
	&+\int_0^t\langle\mathbf{V}_r(t),e_i\rangle\cdot\langle\mathbf{U}_1(s),e_i\rangle \D s + \int_0^t\langle\mathbf{V}_r(t),e_i\rangle\cdot\langle\mathcal{P}_H(\mathcal{\sigma}(\mathbf{U})),e_i\rangle\D W (s)\notag\\&+\int_{0}^t\langle\mathcal{P}_H(\mathcal{\sigma}(\mathbf{U})),e_i\rangle\cdot\langle\mathcal{P}_H(\mathcal{\sigma}(\mathbf{V}))_r,e_i\rangle\D s
\end{align*}
Now  we use the fact that $\int_{\T^3}\mathbf{U}\cdot\mathbf{V}\dx=\sum_{i\ge\,1}\langle \mathbf{U},e_i\rangle\cdot\langle\mathbf{V}, e_i\rangle$ then we have, $\p$-a.s., for all $t\in[0,T]$
\begin{align*}
	\int_{\T^3}\mathbf{U}&(t)\cdot\mathbf{V}_r(t)\dx= \int_{\T^3}\mathbf{U}(0)\cdot \mathbf{V}_r(0)\dx + \int_0^t \int_{\T^3}\mathcal{P}_H\big(\Div(\mathbf{V}_1) \big)_r\cdot\mathbf{U}(s)\rangle\dx \D s\notag\\& + \int_0^t\int_{\T^3}\mathcal{P}_H\big(\Div \lambda_r\big)\cdot\mathbf{U}(s)\dx \D s + \int_0^t\int_{\T^3}\mathcal{P}_H(\mathcal{\sigma}(V))_r\cdot \mathbf{U}(s)\dx\D W(s)\notag\\
	&+\int_0^t\int_{\T^3}\mathbf{V}_r(t)\cdot\mathbf{U}_1(s)\dx \D s + \int_0^t\int_{\T^3}\mathbf{V}_r(t)\cdot\mathcal{P}_H(\mathcal{\sigma}(\mathbf{U}))\dx\D W(s)\notag\\&+\int_{0}^t\int_{\T^3}\mathcal{P}_H(\mathcal{\sigma}(\mathbf{U}))\cdot\mathcal{P}_H(\mathcal{\sigma}(\mathbf{V}))_r\dx\D s.
\end{align*}
Now we are able to perform the limit $r\to 0$ in the above relation by using the hypotheses of Lemma \ref{Ito}, completing the proof.
\end{proof}
\subsection{Relative energy inequality (the Euler system)}
\label{MEI}
	\textbf{Relative energy functional:} We proceed further and introduce the \textit{relative energy (entropy)} functional. The commonly used form of the \textit{relative energy} functional in the context of measure-valued solutions to the incompressible Euler system reads, $\p$-a.s, for all $t\in[0,T]$
\begin{equation}
	\notag
	\begin{aligned} 
	\mathfrak{E}_{\mathrm{mv}} \left(\textbf{u} \ \Big|{\bf U} \right)(t)
		:=
		&\liminf_{r\to0}\frac{1}{r}\int_{t}^{t+r}\bigg[\int_{\T^3}{\left\langle {\mathcal{V}^{\omega}_{s,x}}; \frac{1}{2} { |\textbf{u}|^2}\right\rangle }\dx+ \mathcal{H}(s)\bigg]\D s
		- \int_{\T^3}{ \big \langle {\mathcal{V}^{\omega}_{t,x}}; \textbf{u} \big \rangle \,\cdot {\bf U(t)}}\dx\\&\qquad +\frac{1}{2}  \int_{\T^3}{  |{\bf U(t)}|^2 } \dx.
	\end{aligned}
\end{equation}
We note that by Lebesgue differentiation theorem and energy inequality \eqref{energy_001}, we have $\p$-a.s, almost every $t\in[0,T]$
\begin{equation}
	\notag
	\begin{aligned} 
		&\mathfrak{E}_{\mathrm{mv}} \left(\textbf{u} \ \Big|{\bf U} \right)(t)
		:=
	\int_{\T^3}{\left\langle {\mathcal{V}^{\omega}_{t,x}}; \frac{1}{2} { |\textbf{u}|^2}\right\rangle }\dx
		- \int_{\T^3}{ \big \langle {\mathcal{V}^{\omega}_{t,x}}; \textbf{u} \big \rangle \,\cdot {\bf U(t)}}\dx +\frac{1}{2}  \int_{\T^3}{  |{\bf U(t)}|^2 } \dx+ \mathcal{H}(t).
	\end{aligned}
\end{equation}
\textbf{Relative energy inequality:}
\begin{Proposition}[Relative Energy] 
	\label{relen}
	Let $\big[ \big(\Omega,\mathfrak{F}, (\mathfrak{F}_{t})_{t\geq0},\mathbb{P} \big); {\mathcal{V}^{\omega}_{t,x}}, W \big]$ be a dissipative measure-valued martingale solution to the system \eqref{P1}.
	Suppose $\mathbf{U}$ be stochastic processes which is adapted to the filtration $(\mathfrak{F}_t)_{t\geq0}$ and which satisfies
	\begin{equation*}
		\begin{aligned}
			\label{operatorBB}
			\mathrm{d}\mathbf{U}  &= \mathbf{U}_1\,\mathrm{d}t  + \mathcal{P}_H\mathbf{U}_2\,\mathrm{d}W,
		\end{aligned}
	\end{equation*}
	with
	\begin{align}\label{eq:smooth}
		\begin{aligned}\vc{U} \in C([0,T]; C_{\dv}^{1}(\T^3)), \ \quad\text{$\mathbb{P}$-a.s.},\qquad
			\mathbb{E}\bigg[ \sup_{t \in [0,T] } \| \vc{U} \|_{L_{\dv}^{2}(\T^3)}^2\bigg]\textless\,\infty,
		\end{aligned}
	\end{align}
	Moreover, $\vc{U}$ satisfy
	\begin{align}\label{new}
		&  \vc{U}_1\in L^2(\Omega;L^2(0,T;L_{\dv}^{2}(\mt))),\quad  \vc{U}_2\in L^2(\Omega;L^2(0,T;L_2(\mathfrak U;L^2(\T^3)))),
	\end{align}
	$$\int_0^T\sum_{k\geq 1}\|\mathcal{P}_H\vc{U}_2(e_k)\|_{L^2(\mathbb{T}^3)}^2\in L^1(\Omega).$$
	Then the following \emph{relative energy inequality} holds:
	\begin{equation}
		\begin{aligned}
			\label{relativeEntropy}
			&\mathcal{E}_{\mathrm{mv}} \left(\textbf{u} \ \Big|  {\bf U} \right)
			(t) \leq
			\mathcal{E}_{\mathrm{mv}} \left(\textbf{u} \ \Big| {\bf U} \right)(0) +M_{RE}(t)  + \int_0^t\mathcal{R}_{\mathrm{mv}} \big(\textbf{u} \left\vert \right.  \mathbf{U}  \big)(s)\,\mathrm{d}s
		\end{aligned}
	\end{equation}
	$\mathbb P$-a.s., where
	\begin{equation}
		\begin{aligned}
			\label{remainderRE}
			\mathcal{R}_{\mathrm{mv}} \big({\bf u} \left\vert \right.\mathbf{U}  \big) 
			&=
			\int_{\mathbb{T}^3} \left\langle {\mathcal{V}^{\omega}_{t,x}}; {{\bf u}\otimes {\bf u} } \right\rangle: \nabla_x \textbf{U} \,\mathrm{d}x+\int_{\mathbb{T}^3}\left\langle {\mathcal{V}^{\omega}_{t,x}};\bf u\right\rangle\cdot {\bf U}_1dxdt -\int_{\T^3} \nabla_x \mathbf{U} : d\lambda_{\mathcal{C}} + \frac12\int_{\T^3} d \lambda_{\mathcal{D}}\\
			&\qquad+\frac{1}{2}
			\sum_{k\in\mathbb{N}}
			\int_{\mathbb{T}^3} \big \langle {\mathcal{V}^{\omega}_{t,x}}; \big\vert {\mathcal{\mathcal{\sigma}}_k({\bf u})} -\mathbf{U}_2(e_k)  \big\vert^2\big\rangle \,\mathrm{d}x 
		\end{aligned}
	\end{equation}
	Here $M_{RE}$ is a real valued square integrable martingale, and the norm of this martingale depends only on the norms of $\mathbf{U}$ in the aforementioned spaces.
\end{Proposition}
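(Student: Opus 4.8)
The plan is to obtain \eqref{relativeEntropy} by adding three evolution relations and reorganising the resulting drift. First I would read the energy inequality \eqref{third condition measure-valued solution} between $s=0$ and a generic $t$; since $\mathcal{E}(0-)=\int_{\mt}\tfrac12|\vu_0|^2\dx$, this controls the growth of $\int_{\mt}\langle\mathcal{V}^{\omega}_{t,x};\tfrac12|\vu|^2\rangle\dx+\mathcal{H}(t)$. Second, I would apply the weak It\^o formula of Lemma~\ref{Ito} to the pair $(\mathbf{V},\mathbf{U})$ with $\mathbf{V}=\langle\mathcal{V}^{\omega}_{t,x};\vu\rangle$, $\mathbf{V}_1=\langle\mathcal{V}^{\omega}_{t,x};\vu\otimes\vu\rangle$, $\lambda=\lambda_{\mathcal{C}}$ -- this data is exactly the measure-valued momentum balance \eqref{second condition measure-valued solution}, after a routine extension of Lemma~\ref{Ito} in which the martingale part of $\mathbf{V}$ is the genuine diffusion $\langle\mathcal{V}^{\omega}_{t,x};\sigma(\vu)\rangle$ rather than $\sigma(\mathbf{V})$ -- so as to produce an identity for the cross term $\int_{\mt}\langle\mathcal{V}^{\omega}_{t,x};\vu\rangle\cdot\mathbf{U}(t)\dx$. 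Third, I would apply the classical It\^o formula to $t\mapsto\tfrac12\|\mathbf{U}(t)\|^2_{L^2_{\dv}(\mt)}$ along $\D\mathbf{U}=\mathbf{U}_1\dt+\mathcal{P}_H\mathbf{U}_2\,\D W$. Adding (energy inequality)~$-$~(cross-term identity)~$+$~(It\^o for $\|\mathbf{U}\|^2$) and reading the left-hand side through the $\liminf$ defining $\mathfrak{E}_{\mathrm{mv}}$, so that it becomes $\mathcal{E}_{\mathrm{mv}}(\vu\,|\,\mathbf{U})(t)-\mathcal{E}_{\mathrm{mv}}(\vu\,|\,\mathbf{U})(0)$, should then give the claim once the drift is identified with $\int_0^t\mathcal{R}_{\mathrm{mv}}$ and the stochastic part with $M_{RE}$.

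Before invoking Lemma~\ref{Ito} I would check its hypotheses for $\mathbf{V}=\langle\mathcal{V}^{\omega}_{t,x};\vu\rangle$. The regularity $\mathbf{V}\in C_w([0,T];L^2_{\dv}(\mt))\cap L^{\infty}((0,T);L^2_{\dv}(\mt))$ $\p$-a.s.\ and $\mathbb{E}\big[\sup_{t\in[0,T]}\|\mathbf{V}\|^2_{L^2_{\dv}(\mt)}\big]<\infty$ are part of Definition~\ref{def:dissMartin}(d). By Jensen, $|\langle\mathcal{V}^{\omega}_{t,x};\vu\otimes\vu\rangle|\le\langle\mathcal{V}^{\omega}_{t,x};|\vu|^2\rangle$, and a stochastic Gronwall argument on \eqref{third condition measure-valued solution} (the defect measures absorbed by $\mathcal{H}$ via \eqref{fourth condition measure-valued solutions}, the noise drift controlled by \eqref{FG1}) gives $\mathbb{E}\big[\sup_{t\in[0,T]}\big(\int_{\mt}\langle\mathcal{V}^{\omega}_{t,x};|\vu|^2\rangle\dx\big)^2\big]<\infty$, whence $\mathbf{V}_1\in L^2(\Omega;L^1(0,T;L^1(\mt)))$; finally \eqref{fourth condition measure-valued solutions} together with $\mathbb{E}\big[\sup_{t\in[0,T]}\mathcal{H}(t)\big]<\infty$ gives $\lambda_{\mathcal{C}}\in L^1(\Omega;L^{\infty}_{w*}(0,T;\mathcal{M}_b(\mt)))$. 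The growth bound \eqref{FG1} also makes $\langle\mathcal{V}^{\omega}_{t,x};\sigma(\vu)\rangle$ a stochastically integrable, Hilbert--Schmidt-valued coefficient.

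Next I would collect the terms. The deterministic drift coming from the cross-term identity and the It\^o formula for $\|\mathbf{U}\|^2$, after an integration by parts using $\Div\,\mathbf{U}=0$ to kill the pressure-type contributions, produces the convective term $\int_{\mt}\langle\mathcal{V}^{\omega}_{t,x};\vu\otimes\vu\rangle:\nabla_x\mathbf{U}\dx$, the $\mathbf{U}_1$-drift $\int_{\mt}\langle\mathcal{V}^{\omega}_{t,x};\vu\rangle\cdot\mathbf{U}_1\dx$ (up to a term $\int_{\mt}\mathbf{U}\cdot\mathbf{U}_1\dx$ that vanishes when $\mathbf{U}$ is a solenoidal solution of the Euler system), and the concentration terms $-\int_{\mt}\nabla_x\mathbf{U}:\D\lambda_{\mathcal{C}}+\tfrac12\int_{\mt}\D\lambda_{\mathcal{D}}$. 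The three forcing contributions -- $\tfrac12\sum_k\langle\mathcal{V}^{\omega}_{t,x};|\sigma_k(\vu)|^2\rangle$ from \eqref{third condition measure-valued solution}, the It\^o cross-variation $-\sum_k\mathcal{P}_H\langle\mathcal{V}^{\omega}_{t,x};\sigma_k(\vu)\rangle\cdot\mathcal{P}_H\mathbf{U}_2(e_k)$, and $\tfrac12\sum_k|\mathcal{P}_H\mathbf{U}_2(e_k)|^2$ -- together with the $\mathcal{Q}_H$-term already carried by \eqref{third condition measure-valued solution}, regroup (using that $\mathcal{P}_H$ is an orthogonal projection of $L^2(\mt)$ and $\mathbf{U}$ is solenoidal) into $\tfrac12\sum_k\langle\mathcal{V}^{\omega}_{t,x};|\sigma_k(\vu)-\mathbf{U}_2(e_k)|^2\rangle$ minus the manifestly nonpositive quantity $\tfrac12\sum_k|\mathcal{Q}_H(\langle\mathcal{V}^{\omega}_{t,x};\sigma_k(\vu)\rangle-\mathbf{U}_2(e_k))|^2$, which I would discard to keep the inequality; what is left is $\mathcal{R}_{\mathrm{mv}}(\vu\,|\,\mathbf{U})$ as in \eqref{remainderRE}.

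The leftover stochastic integrals -- $\int_0^t\D\mathcal{M}^2_{E}$ from \eqref{third condition measure-valued solution}, and $-\int_0^t\!\int_{\mt}\mathbf{U}\cdot\langle\mathcal{V}^{\omega}_{s,x};\sigma(\vu)\rangle\,\D W\dx+\int_0^t\!\int_{\mt}\big(\mathbf{U}-\langle\mathcal{V}^{\omega}_{s,x};\vu\rangle\big)\cdot\mathcal{P}_H\mathbf{U}_2\,\D W\dx$ from the two It\^o expansions -- make up $M_{RE}(t)$; being a finite sum of It\^o integrals plus the square integrable martingale $\mathcal{M}^2_E$, it is a local martingale, and Burkholder--Davis--Gundy together with \eqref{FG1}, the a priori bounds above, and \eqref{eq:smooth}--\eqref{new} give $\mathbb{E}\big[\langle M_{RE}\rangle_T\big]<\infty$ with a bound depending only on the norms of $\mathbf{U}$. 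Since \eqref{third condition measure-valued solution} holds $\p$-a.s.\ for \emph{all} $0\le s<t$ while the two It\^o identities hold $\p$-a.s.\ for all $t$, taking $s=0$ delivers \eqref{relativeEntropy} $\p$-a.s., for all $t\in[0,T]$. The hard part will be the bookkeeping of the stochastic forcing: because the Euler noise is Helmholtz-projected while $\langle\mathcal{V}^{\omega}_{t,x};\sigma(\vu)\rangle$ is not solenoidal and $\mathcal{P}_H,\mathcal{Q}_H$ are nonlocal, one must track precisely which terms carry which projection before assembling the squares above, and record the oscillation defect $\langle\mathcal{V}^{\omega}_{t,x};|\sigma_k(\vu)|^2\rangle-|\langle\mathcal{V}^{\omega}_{t,x};\sigma_k(\vu)\rangle|^2$ correctly; a secondary, routine obstacle is the extension of Lemma~\ref{Ito} to a process whose martingale part is the general diffusion $\langle\mathcal{V}^{\omega}_{t,x};\sigma(\vu)\rangle$.
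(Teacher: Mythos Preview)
Your proposal is correct and follows essentially the same route as the paper: energy inequality \eqref{third condition measure-valued solution}, weak It\^o product (Lemma~\ref{Ito}) for the cross term $\int_{\mt}\langle\mathcal{V}^{\omega}_{t,x};\vu\rangle\cdot\mathbf{U}\,\dx$, classical It\^o for $\tfrac12\|\mathbf{U}\|^2$, and then the projection identity $\mathcal{P}_H+\mathcal{Q}_H=\mathbb{I}$ to regroup the noise drifts into $\tfrac12\sum_k\langle\mathcal{V}^{\omega}_{t,x};|\sigma_k(\vu)-\mathbf{U}_2(e_k)|^2\rangle$ minus the nonpositive $\mathcal{Q}_H$-square, which is discarded. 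Your bookkeeping of the projections and of the oscillation defect is exactly the computation the paper records as \eqref{rt}, and your remark that Lemma~\ref{Ito} must be read with the diffusion $\langle\mathcal{V}^{\omega}_{t,x};\sigma(\vu)\rangle$ in place of $\sigma(\mathbf{V})$ is a point the paper uses implicitly without comment.
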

\begin{proof}
	
	Here we shall complete proof of mention result in following several steps.
	\medskip 
	
	\noindent {\bf Step 1:} 
	In order to compute $\D\int_{\T^3}{ \big \langle {\mathcal{V}^{\omega}_{t,x}}; \vu  \big \rangle \,\cdot {\bf U} }\D x$, we first recall that $\mathbf{V}=\langle {\mathcal{V}^{\omega}_{t,x}}; \vu  \big \rangle $ satisfies the hypotheses of Lemma \ref{Ito}. Therefore we can apply the Lemma \ref{Ito} to conclude that $\mathbb{P}$-almost surely, for all $\tau\in[0,T]$
	\begin{equation} \label{I1}
		\begin{split}
			 \int_{\T^3}{ \big \langle {\mathcal{V}^{\omega}_{\tau, x}}; \vu  \big \rangle \cdot {\bf U}(\tau) }\dx\dt& =  \int_{\T^3}{ \big \langle {\mathcal{V}^{\omega}_{0,x}}; \vu  \big \rangle \cdot {\bf U}(0) }\dx\dt + \int_0^\tau\int_{\T^3}  \nabla_x {\bf U}: d\mu_C \,dt \\&\qquad +\int_0^\tau\int_{\T^3}{ \left[ \big \langle {\mathcal{V}^{\omega}_{t,x}}; \vu  \big \rangle \cdot {\bf U}_1(t)
				+ \left\langle {\mathcal{V}^{\omega}_{t,x}}; {\vu \otimes \vu }\right\rangle: \nabla_x \vu   \right] }\dx  {\rm d}t \\
			& \qquad +  \sum_{k\geq1}\int_0^\tau\int_{\T^3}{ \mathcal{P}_H{\bf U}_2(e_k) \cdot\mathcal{P}_H \big \langle {\mathcal{V}^{\omega}_{t,x}}; \sigma_k (\vu ) \big\rangle}\dx\, {\rm d}t+  \D \mathcal{M}_1,
		\end{split}
	\end{equation}
	where the square integrable martingale $\mathcal{M}_1(t)$ is given by
	\[
	\mathcal{M}_1(t) = \int_{\T^3} \int_0^t  \vc{U}\cdot\big \langle {\mathcal{V}^{\omega}_{t,x}}; \mathcal{\sigma}_k (\vu ) \big\rangle \, \D W \,dx+ \int_0^t \int_{\T^3}{ \big \langle {\mathcal{V}^{\omega}_{t,x}}; \vu  \big \rangle \cdot \mathcal{P}_H\vc{U}_2 } \,\D W
	\]
	\noindent
	{\bf Step 2:}
	Next, we see that $\p$-a.s., for all $\tau\in[0,T]$
	\begin{equation} \label{I21}
		\begin{split}
			 \int_{\T^3}{ \frac{1}{2} |\vc{U}(\tau)|^2 }\dx&= \int_{\T^3}{ \frac{1}{2} |\vc{U}(0)|^2 }\dx+ \frac{1}{2}\int_0^\tau \sum_{k\geq1}\int_{\T^3}{|\mathcal{P}_H {\bf U}_2(e_k)|^2 }\dx \ {\rm d}t + {\rm d}\mathcal{M}_2,
		\end{split}
	\end{equation}
	where
	\[
	\mathcal{M}_2(t) = \int_0^t \int_{\T^3}{  \vc{U} \cdot\mathcal{P}_H\vc{U}_2 } \, \D W.
	\]
	\noindent 
	{\bf Step 3:}
	We have from energy inequality, $\p$-a.s., for all $\tau\in[0,T]$,
	\medskip
	\begin{align}
		\label{ph}
		\mathfrak{E}(\tau+)\, \leq\, &\mathfrak{E}(0)+ \frac{1}{2} \sum_{k\ge\,1}\int_0^{\tau} \int_{\T^3} \left\langle \mathcal{V}^{\omega}_{s,x};{|{\mathcal{\sigma}_k( u)}|^2} \right\rangle \dx{\rm d}s-\frac{1}{2}\sum_{k\ge\,1}\int_0^\tau\int_{\mathbb{T}^3}\Big(\mathcal{Q}_H\big\langle \mathcal{V}^{\omega}_{s,x};{|{\mathcal{\sigma}_k( u)}|}\big\rangle\Big)^2\dx{\rm d}s \notag \\&\qquad+ \frac12\int_0^{\tau} \int_{\T^3} d \mu_D + \int_0^{\tau}  d\mathcal{M}^2_{E}. 			
	\end{align}
	We now manipulate the product term in the equality \eqref{I1} using properties of projections $\mathcal{P}_H$ and $ \mathcal{Q}_H$. Indeed, note that
	\begin{align*}
		&\int_{\T^3}{ \mathcal{P}_H{\bf U}_2(e_k) \cdot\mathcal{P}_H \big \langle {\mathcal{V}^{\omega}_{t,x}}; \mathcal{\sigma}_k (\vu ) \big\rangle}\\
		&\qquad =\int_{\T^3}{ {\bf U}_2(e_k) \cdot\big \langle {\mathcal{V}^{\omega}_{t,x}}; \sigma_k (\vu ) \big\rangle}-\int_{\T^3}{ \mathcal{Q}_H{\bf U}_2(e_k) \cdot\mathcal{Q}_H \big \langle {\mathcal{V}^{\omega}_{t,x}}; \sigma_k (\vu ) \big\rangle},
	\end{align*}
	and
	\begin{align*}
		\int_{\T^3}{|\mathcal{P}_H {\bf U}_2(e_k)|^2 }=\int_{\T^3}{|{\bf U}_2(e_k)|^2 }-\int_{\T^3}{|\mathcal{Q}_H {\bf U}_2(e_k)|^2 }.
	\end{align*}
	These properties of projections imply that
	\begin{align}\label{rt}
		\begin{aligned}
			&\frac{1}{2} \sum_{k\geq1}\int_{\T^3}{|\mathcal{P}_H {\bf U}_2(e_k)|^2 } -\sum_{k\geq1}\int_{\T^3}{ \mathcal{P}_H{\bf U}_2(e_k) \cdot\mathcal{P}_H \big \langle {\mathcal{V}^{\omega}_{t,x}}; {\sigma}_k (\vu ) \big\rangle}\\&\qquad\qquad+ \frac{1}{2} \sum_{k\ge\,1}\int_s^{t} \int_{\T^3} \left\langle \mathcal{V}^{\omega}_{s,x};{|{\vc \sigma_k( u)}|^2} \right\rangle \dx\,{\rm d}\tau-\frac{1}{2}\sum_{k\ge\,1}\int_s^t\int_{\mathbb{T}^3}\Big(\mathcal{Q}_H\big\langle \mathcal{V}^{\omega}_{s,x};{{\sigma_k( u)}}\big\rangle\Big)^2\dx\,{\rm d}\tau\\&=\frac{1}{2}\sum_{k\ge\,1}\int_{\mathbb{T}^3} \Big \langle {\mathcal{V}^{\omega}_{t,x}}; \big\vert {\mathcal{\sigma}_k({\bf u})} -\mathbf{U}_2(e_k)  \big\vert^2\Big\rangle \,\mathrm{d}x-\frac{1}{2}\sum_{k\ge\,1}\int_{\mathbb{T}^3}\Big|\mathcal{Q}_H\mathbf{U}_2(e_k)-\mathcal{Q}_H \big \langle {\mathcal{V}^{\omega}_{t,x}}; {\sigma}_k (\vu ) \big\rangle\Big|^2\,\dx\\
			&\le\,\frac{1}{2}\sum_{k\ge\,1}\int_{\mathbb{T}^3} \Big \langle {\mathcal{V}^{\omega}_{t,x}}; \big\vert {\mathcal{\sigma}_k({\bf u})} -\mathbf{U}_2(e_k)  \big\vert^2\Big\rangle \,\mathrm{d}x.
		\end{aligned}
	\end{align}
	Finally, in view of the above observations given by (\ref{I1})-(\ref{rt}), we can now add the resulting expressions to establish (\ref{relativeEntropy}). Note that the square integrable martingale $\mathcal{M}_{RE}(t)$ is given by $\mathcal{M}_{RE}(t):= \mathcal{M}_1(t) + \mathcal{M}_2(t)+\mathcal{M}_E^2 (t)$.
\end{proof}
\subsection{Proof of weak-strong principle \ref{Weak-Strong Uniqueness_01}}
	Since $\bar{\vu}$ is the strong pathwise solution to stystem \eqref{P1}, so taking $\mathbf{U}$ = $\bar{\vu}$ in the relative energy inequality \eqref{relativeEntropy}. Then we get  $\p$-a.s., for all $t\in[0,T]$,
	\begin{align}\label{relative energy 3}
		\mathfrak{E}_{\mathrm{mv}}\big(\mathbf{u}\,\big|\bar{\vu}\big)(t\wedge\mathfrak{t}_R)\le\,\mathfrak{E}_{\mathrm{mv}}\big(\mathbf{u}\,\big|\,\bar{\vu}\big)(0)+\mathcal{M}_{RE}(t\wedge\mathfrak{t}_R)+\int_{0}^{t\wedge\mathfrak{t}_R}\mathfrak{R}_{\mathrm{mv}}\big(\mathbf{u}\,\big|\,\bar{\vu}\big)(\tau)\D \tau,
	\end{align}
	where $\mathfrak{R}_{\mathrm{mv}}\big(\mathbf{u}\,\big|\bar{\vu}\big)$ is given by
	\begin{equation}
		\begin{aligned}
			\notag
			\mathfrak{R}_{\mathrm{mv}}\left({\bf u}  \big|\bar{\vu}\right) 
			=&\,  \int_{\mathbb{T}^3} \left\langle {\mathcal{V}^{\omega}_{t,x}}; \left|{(\vu - \bar{\vu})\otimes ( \bar{\vu}-\vu ) }\right| \right\rangle |\nabla_x \bar{\vu}| \,dx +
			\frac{1}{2}
			\sum_{k\in\mathbb{N}}
			\int_{\mathbb{T}^3} \big \langle {\mathcal{V}^{\omega}_{t,x}}; \big\vert {\mathcal{\sigma}_k(\mathbf{u})}  -{\mathcal{\sigma}_k(\bar{\vu})} \big\vert^2 \big \rangle \,\mathrm{d}x \\
			&\qquad+  \int_{\T^3} |\nabla_x \bar{\vu}|\cdot d|\lambda_\mathcal{C}|+  \int_{\T^3} d|\lambda_\mathcal{D}|.
		\end{aligned}
	\end{equation}
Now use the following facts  $$\|\mathbf{\bar{\vu}}\|_{W^{1,\infty}(\mathbb{T}^3)}\le\,c(R)\,\, \text{for all}\,\, t\,\le\,\tau_R \,\,\&\,\,\,\left| {(\vu - \bar{\vu}) \otimes ( \bar{\vu}-\vu )} \right| \leq |\vu -\bar{\vu}|^2,$$
\begin{equation}
	\begin{aligned}
		\notag
		\sum_{k \ge 1}\big\vert {\mathcal{\sigma}_k(\mathbf{u})}  -{\mathcal{\sigma}_k(\bar{\vu})} \big\vert^2\leq D_1|\mathbf{u}-\bar{\vu}|^2,
	\end{aligned}
\end{equation}
to conclude that
\begin{align*}
	\frac{1}{2}
	\sum_{k\in\mathbb{N}}
	\int_{\mathbb{T}^3} \big \langle {\mathcal{V}^{\omega}_{t,x}}; \big\vert {\mathcal{\sigma}_k(\mathbf{u})}  -{\mathcal{\sigma}_k(\bar{\vu})} \big\vert^2 \big \rangle \,\mathrm{d}x
	\leq c(L)\,
	\mathfrak{E}_{\mathrm{mv}} \big( {\mathbf{u}}\left\vert \right. \bar{\vu}  \big),
\end{align*}
and
	\begin{align}\label{reminder 2}
		\int_{0}^{t\wedge\mathfrak{t}_R}\mathfrak{R}_{\mathrm{mv}}\big(\mathbf{u}\,\big|\bar{\vu}\big)(s)\,\D s\le\,c(R)\int_{0}^{t\wedge\mathfrak{t}_R}\big(\mathfrak{E}_{\mathrm{mv}}\big(\mathbf{u}\,\big|\,\bar{\vu}\big)(s)\, \D s.
	\end{align}
	In view of \eqref{relative energy 3} and \eqref{reminder 2}, a simple consequence of Gronwall's lemma gives, for all $t\in[0,T]$
	\begin{align*}
		\mathbb{E}\big[\mathfrak{E}_{\mathrm{mv}}\big(\mathbf{u}\,\big|\bar{\vu}\big)(t\wedge\mathfrak{t}_R)\big]\le c(R)\,\mathbb{E}\big[\mathfrak{E}_{\mathrm{mv}}\big(\mathbf{u}\,\big|\bar{\vu}\big)(0)\big].
	\end{align*}
	Since, initial data are same for both solutions, right hand side of above inequality equals to zero. Therefore it implies that for all $t\in [0,T]$
	$$\mathbb{E}\big[\mathfrak{E}_{\mathrm{mv}}\big(\mathbf{u}\,\big|\bar{\vu}\big)(t\wedge\mathfrak{t}_R)\big]=0.$$
	This also implies that
	$$\int_0^T\mathbb{E}\big[\mathfrak{E}_{\mathrm{mv}}\big(\mathbf{u}\,\big|\bar{\vu}\big)(s\wedge\mathfrak{t}_R)\big]\D s=0.$$
	In view of a priori estimates, a usual Lebesgue point argument, and application of Fubini's theorem reveals that $\p$-a.s.,
	$$\int_0^T\mathfrak{E}_{\mathrm{mv}}\big(\mathbf{u}\,\big|\,\bar{\mathbf{u}}\big)(t\wedge\mathfrak{t}_R)\dt=0.$$
	Since, the defect measure $\mathcal{H}\ge\,0$, we have $\p$-a.s., for a.e. $t\in[0,T]$, $\mathcal{H}(t\wedge\mathfrak{t}_R)=0$,
	 Moreover, $\p-\mbox{a.s.}$
	\begin{equation*}
		\mathcal{V}^{\omega}_{t \wedge \mathfrak{t}_R,x} = \delta_{\bar{\bf u}(t \wedge \mathfrak{t}_R,x)}, \,\,  \mbox{for a.e. }(t,x)\in (0,T)\times \T^3.
	\end{equation*} 
	This proves our claim.

\section{Proof of Theorem~\ref{T_ccE}: Convergence to regular solution}\label{Section 8}
\label{proof2}

We have proven that the approximate solutions $\widetilde{\vu}_n$  to \eqref{approximate equations} for the stochastic incompressible Euler system converges to a {\em dissipative measure-valued martingale} solution, in the sense of Definition~\ref{def:dissMartin}. Employing the corresponding weak (measure-valued)--strong uniqueness results (cf. Theorem~\ref{Weak-Strong Uniqueness_01}), we can show the strong convergence of approximate solutions to the strong solution of the system on its lifespan.

First note that, Proposition~\ref{prop:skorokhod1} and Theorem~\ref{Weak-Strong Uniqueness_01} gives the required weak-$*$ convergence. Indeed, from Proposition~\ref{prop:skorokhod1}, we have $\widetilde{\p}$-a.s.,
$$\widetilde{\vu}_n(\cdot\wedge\mathfrak{t}_R)\to\langle {\widetilde{\mathcal{V}}^{\omega}_{t,x}}; \widetilde{\vu} \rangle(\cdot\wedge\mathfrak{t}_R)\,\,\mbox{in}\,\,\,C_w([0,T],L_{\dv}^2(\mathbb{T}^3)),$$
Combination of above convergence and Theorem~\ref{Weak-Strong Uniqueness_01} gives the required weak-* convergence. For the proof of strong convergence of momentum in $L^1(\T^3)$, note that from Proposition \ref{prop:skorokhod1}, Theorem \ref{Weak-Strong Uniqueness_01}, energy bounds \eqref{uniform estimate} and using the fact limit Young measure of any subsequence $\delta_{\widetilde{\vu}_{n_k}(\cdot\wedge\mathfrak{t}_R)}$ is $\delta_{\bar\vu(\cdot\wedge\mathfrak{t}_R)}$. We have $\widetilde{\mathbb{P}}$-a.s., sequence of young measure converges to dirac Young measure, i.e. $\widetilde{\p}$-a.s.
$$ \delta_{\widetilde{\vu}_{n}(\cdot\wedge\mathfrak{t}_R)}  \rightarrow \delta_{\bar{\vu}(\cdot \wedge \mathfrak{t}_R)},\,\, \text{weak-$*$ in}\, \, L^{\infty}((0,T)\times \T^3; \mathcal{P}(\R^4))$$
By theory of Young measure \cite[Proposition 4.16]{Balder}, it implies that, $\widetilde{\p}$-a.s. $\widetilde{\vu}_{n}(\cdot\wedge\mathfrak{t}_R)$ converges to $\bar{\vu}(\cdot \wedge \mathfrak{t}_R)$ in measure respectively. Note that, $\widetilde{\p}$-a.s. sequence $\widetilde{\vu}_{n}(\cdot\wedge\mathfrak{t}_R)$ is uniformly integrable and converges in measure, therefore Vitali's convergence theorem implies that $\widetilde{\p}$-a.s,
\begin{equation*}
	\begin{aligned}
		\widetilde{\vu}_{n}(\cdot\wedge\mathfrak{t}_R) &\to \bar\vu(\cdot\wedge\mathfrak{t}_R) \ \mbox{ strongly in}\ L^1((0,T) \times \T^3),
	\end{aligned}
\end{equation*}
This finishes the proof of the Theorem \ref{T_ccE}.

\subsection*{Acknowledgments}
The author wishes to thank Ujjwal Koley for many stimulating discussions and valuable suggestions. 

\end{document}